\newtheorem{Theorem}{Theorem}[section]
\newtheorem{Proposition}[Theorem]{Proposition}
\newtheorem{Lemma}[Theorem]{Lemma}
\newtheorem{Corollary}[Theorem]{Corollary}
\theoremstyle{definition}
\newtheorem{Definition}{Definition}[section]
\theoremstyle{remark}
\numberwithin{equation}{section}
\newcommand{\Z}{{\mathbb Z}}
\newcommand{\R}{{\mathbb R}}
\newcommand{\C}{{\mathbb C}}
\newcommand{\N}{{\mathbb N}}
\begin{document}

\title[Ergodic Jacobi matrices]{Ergodic Jacobi matrices and conformal maps}

\author{Injo Hur}

\address{Mathematics Department\\
University of Oklahoma\\
Norman, OK 73019}

\email{ihur@math.ou.edu}

\author{Christian Remling}

\address{Mathematics Department\\
University of Oklahoma\\
Norman, OK 73019}

\email{cremling@math.ou.edu}

\urladdr{www.math.ou.edu/$\sim$cremling}

\date{August 23, 2011}
\thanks{2000 {\it Mathematics Subject Classification.} Primary 47B36 81Q10;
Secondary 30C20}

\keywords{Jacobi matrix, density of states, Lyapunov exponent}

\thanks{CR's work supported
by NSF grant DMS 0758594}
\begin{abstract}
We study structural properties of the Lyapunov exponent $\gamma$ and the density of states $k$
for ergodic (or just invariant) Jacobi matrices in a general framework. In this analysis,
a central role is played by the function $w=-\gamma+i\pi k$ as a conformal map between
certain domains. This idea goes back to Marchenko and Ostrovskii, who used this device in
their analysis of the periodic problem.
\end{abstract}
\maketitle
\section{Introduction and basic setup}
\subsection{Introduction}
In this paper, we present a general abstract analysis of the basic quantities that
are commonly used in the spectral theory of ergodic spaces of Jacobi matrices.
Our original inspiration came from the work of Marchenko-Ostrovskii on periodic
Schr\"odinger operators \cite{MO}, which is perhaps best known (definitely to us)
through the reinterpretation of
this material that was given in \cite{GT1,GT2}. Marchenko-Ostrovskii use certain conformal
maps to parametrize periodic
problems, and the same device can be used in a much more general setting. This is one
of the main themes of the present paper.

What we do here has some overlap with earlier work
on the direct and inverse spectral theory of ergodic and invariant Jacobi matrices,
most notably with the by now classical contributions of Kotani
\cite{CarKot,Kot85,Kot}. So some parts of this paper are expository in character. Rather than
focus exclusively on those parts that (we believe) are new, we have attempted
to give a unified, coherent presentation that starts almost from scratch.
In those parts where the results are not
new, we usually propose alternative arguments.
\subsection{Basic setup}
Let us now try to give a somewhat more detailed description of what we will do here.
Recall that a \textit{Jacobi matrix }is a difference operator on $u\in\ell^2$ of the form
\[
(Ju)_n = a_nu_{n+1}+a_{n-1}u_{n-1}+b_n u_n .
\]
Alternatively, one can represent $J$ by the following tridiagonal matrix with respect to
the standard basis of $\ell^2(\Z)$:
\[
J = \begin{pmatrix} \ddots & \ddots & \ddots &&&& \\ & a_{-2} & b_{-1} & a_{-1} &&&\\
&& a_{-1} & b_0 & a_0 && \\ &&& a_0 & b_1 & a_1 & \\ &&&& \ddots & \ddots & \ddots
\end{pmatrix}
\]
Here, $a_n\ge 0$ and $b_n\in\R$, and we also assume that $a,b\in\ell^{\infty}(\Z)$.
Under these assumptions, $J$ is a bounded self-adjoint operator on $\ell^2(\Z)$.
(One often insists that $a_n>0$, but for what we want to do here, our convention works better.)

We will also impose a uniform bound on the operator norm,
and we will in fact work with specifically the space $\mathcal J_2$
of all such Jacobi matrices $J$ that satisfy
$\|J\|\le 2$; an arbitrary bounded Jacobi matrix will of course lie in $\mathcal J_2$ after
multiplication by a suitable constant. It is often useful to make $\mathcal J_2$
a compact metric space; one possible choice for such a metric is
\begin{equation}
\label{defd}
d(J,J') = \sum_{n\in\Z} 2^{-|n|} \left( |a_n-a'_n| + |b_n-b'_n| \right) .
\end{equation}
The topology induced by $d$ may be described as the product topology on $\mathcal J_2$,
now thought of as a subspace of the product of the intervals $[0,2]$ and $[-2,2]$ from
which we draw the coefficients $a_n$ and $b_n$, respectively. Alternatively,
this topology is also the one induced by both the weak and the strong operator topologies,
and we now think of $\mathcal J_2$ as a subspace of $B(\ell^2)$, the bounded operators
on the Hilbert space $\ell^2(\Z)$.

The shift $S(a,b)_n=(a,b)_{n+1}$ acts as a homeomorphism on $(\mathcal J_2, d)$.
Given an $S$ invariant probability (Borel) measure $\mu$ on $\mathcal J_2$,
we introduce a $w$ function $w=w_{\mu}$ as follows. We average the
spectral measures $d\rho_0(t;J)=d\|E_J(t)\delta_0\|^2$ with respect to $\mu$
to obtain the \textit{density of states }measure $dk$: More precisely, the
map $f\mapsto \int d\mu(J) \int d\rho_0(t;J) f(t)$ defines a positive linear
functional on the continuous functions $f$ on $[-2,2]$, so there exists a unique
(probability) measure $dk$ on the Borel sets of $[-2,2]$ so that
\begin{equation}
\label{defDOS}
\int_{\mathcal J_2} d\mu(J) \int_{[-2,2]} d\rho_0(t;J) f(t) = \int_{[-2,2]} f(t)\, dk(t)
\end{equation}
for all $f\in C[-2,2]$. It's easy to see that $J\mapsto \int f(t)\, d\rho_0(t;J)$ is a continuous
map on $\mathcal J_2$ for fixed $f\in C[-2,2]$; we will discuss this in more detail
in the proof Lemma \ref{Ldkcont} below. In particular, this function is measurable
and thus the left-hand side of \eqref{defDOS} is well defined.

We also define $A>0$ by writing
\[
\int_{\mathcal J_2} \ln a_0(J)\, d\mu(J) = \ln A ,
\]
at least if $\int \ln a_0\, d\mu>-\infty$. For easier reference,
we introduce the notation $\mathcal M_0$ for the set of ($S$ invariant, probability) Borel measures
$\mu$ on $\mathcal J_2$ that satisfy this additional condition. We then set
\begin{equation}
\label{defw}
w(z) = \ln A - \int_{[-2,2]} \ln(t-z)\, dk(t) ,
\end{equation}
for $z\in\mathbb C^+$, the upper half plane in $\C$. Here we take the logarithm
with $\textrm{Im }\ln\zeta\in (-\pi,0)$ for $\zeta\in\C^-$. So in particular
$w$ is a Herglotz function (a holomorphic function $w:\C^+\to\C^+$).
The harmonic (on $\C^+$) function
$\gamma(z)=-\textrm{Re }w(z)$ is called the \textit{Lyapunov exponent.}

These are, of course, well known quantities for ergodic systems of Jacobi
matrices, extended here in an obvious way to measures $\mu$ that are just invariant.
These quantities are often defined in different ways, and indeed
there are quite
a few well known alternative methods to introduce $w$. See \cite{CarLa,PasFig,Teschl}
for (much) more on these topics.
Definition \eqref{defw} is straightforward and convenient for our purposes.
\subsection{Overview of main themes}
As already announced, one of the recurring themes of this paper will be the generalized version of
the observation of Marchenko and Ostrovskii that $w$ maps $\C^+$ conformally
onto an image domain $w(\C^+)$ of a certain type, and, conversely,
these domains can be used to reconstruct
$w$, $A$, and $dk$ (in fact, this is not literally true; it becomes true after a suitable
change of variables, as we'll discuss below). See the discussion of Section 2,
especially Proposition \ref{P2.2}.
A variety of other data are available,
and we study the relations between these in some detail in Sections 3 and 4.
Section 5 contains one of the main results of this paper, Theorem \ref{T5.1}:
Given suitable data (for example, given a $w$ function), we can find an invariant
measure $\mu\in\mathcal M_0$ that will produce these data. A less complete
result of this type was proved earlier in \cite{CarKot}.

A new topic is introduced in Section 6. Here we show that the correspondence between
the gaps of the spectrum and slits of the image domain that is one of the cornerstones
of the Marchenko-Ostrovskii method (and obvious in the original setting) extends
to the general case, if suitable definitions are made. In Section 7, we study
the Lyapunov exponent as a function on $[-2,2]$ (rather than as a harmonic function
on $\C^+$).

Sections 2--7 form the main part of this paper. The final three sections are
lighter in tone. In Section 8, we revisit work of Avila-Damanik \cite{AD} on the positivity
of generic Lyapunov exponents from the point of view suggested by the material
of this paper. Section 9 offers a brief discussion of the possibility of finding
\textit{ergodic }(and not just invariant) measures $\mu$, but what we have to say
here does not really go beyond the work of Kotani \cite{Kot85}, and we present
more questions than answers. In the final section, we give an easy argument
for the invariance of $w(z)$ under a class of transformations that includes
all Toda flows.
\section{Basic objects}
Given $\mu\in\mathcal M_0$, define the corresponding $w$ as described above. Write
\[
w(z) = -\gamma(z) + i\pi k_0(z) ;
\]
notice that $0<k_0<1$. Also, let
\[
k_1(t)=\int_{(-\infty,t]} dk(s)
\]
be the increasing function that generates the density of states measure $dk$.
\begin{Proposition}
\label{P2.1}
(a) Let
\[
k(z) = \begin{cases} k_0(z) & z\in\C^+ \\ k_1(z) & z\in\R \end{cases} .
\]
Then $k$ is continuous on $\C^+\cup\R$.\\
(b) The limit
\[
\gamma(x):=\lim_{y\to 0+} \gamma(x+iy)
\]
exists for all $x\in\R$. Moreover, $\gamma(z)>0$ on $z\in\C^+$.\\
(c) (\textit{Thouless formula}) For all $z\in\C^+\cup\R$,
\[
\gamma(z) = -\ln A + \int_{[-2,2]} \ln |t-z|\, dk(t)
\]
\end{Proposition}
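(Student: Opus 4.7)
My plan is to prove the parts in the order (c), existence of the limit in (b), (a), and finally the positivity in (b), since the Thouless formula of (c) provides the integral representation of $\gamma$ that drives the other parts.

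\textbf{Parts (c) and the existence in (b).} For $z \in \C^+$, (c) is immediate by taking real parts in \eqref{defw}: with $\ln(t-z) = \ln|t-z| + i\arg(t-z)$ and $\arg(t-z) \in (-\pi, 0)$, we obtain $\textrm{Re}\,w(z) = \ln A - \int \ln|t-z|\,dk(t)$, i.e.\ $\gamma(z) = -\ln A + \int \ln|t-z|\,dk(t)$. For $z = x \in \R$, note that $\ln|t-x-iy|^2 = \ln((t-x)^2 + y^2)$ is decreasing in $y$ on $(0,\infty)$ and uniformly bounded above by $2\ln 4$; monotone convergence gives $\int \ln|t-x-iy|\,dk \to \int \ln|t-x|\,dk \in [-\infty, \infty)$, which establishes both the existence of $\gamma(x) = \lim_{y\downarrow 0}\gamma(x+iy)$ and the Thouless formula for real $z$.

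\textbf{Continuity in (a).} The crucial intermediate step is that $dk$ has no atoms. For any $x \in \R$, shift invariance of $\mu$ together with $dk = \int d\rho_0(\cdot;J)\,d\mu(J)$ gives $k(\{x\}) = \int \|E_J(\{x\})\delta_n\|^2\,d\mu(J)$ for every $n \in \Z$. Summing over $n$ and applying Fubini yields $\sum_n k(\{x\}) = \int \dim\ker(J-x)\,d\mu(J) \le 2$, since a tridiagonal Jacobi matrix has at most a two-dimensional eigenspace at any point; thus $k(\{x\}) = 0$ and $k_1$ is continuous on $\R$. To identify the boundary behavior of $k_0$, I would start from $\pi k_0(x+iy) = -\int\arg(t-x-iy)\,dk(t)$ and use the antiderivative identity $-\arg(t-x-iy) = \pi - \int_{-\infty}^t P_y(s-x)\,ds$ (where $P_y(s) = y/(s^2 + y^2)$) together with Fubini to obtain the Poisson representation
\[
\pi k_0(x+iy) = \int P_y(s-x)\, k_1(s)\,ds.
\]
Standard properties of the Poisson kernel then give $k_0(x+iy) \to k_1(x)$ at every $x$, and combined with the obvious continuity of $k_0$ on $\C^+$, this completes the proof of (a).

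\textbf{Positivity $\gamma > 0$ on $\C^+$.} Differentiating the Thouless formula gives $\partial_y \gamma(x+iy) = \int y/((t-x)^2 + y^2)\,dk(t) > 0$, so $\gamma$ is strictly increasing in $y$ along every vertical line; in particular $\gamma(x+iy) > \gamma(x)$, and positivity on $\C^+$ reduces to showing $\gamma(x) \ge 0$ for all $x \in \R$. \emph{This lower bound is the main obstacle}: the Thouless integral $\int \ln|t-x|\,dk$ can be made arbitrarily negative by concentrating $dk$ near $x$, so the integral formula alone does not deliver the bound. The standard resolution identifies $\gamma(x)$ with the averaged Lyapunov exponent of the Jacobi transfer-matrix cocycle $T_n(x;J)$; since $|\det T_n(x;J)| = a_0(J)/a_n(J)$ and shift invariance of $\mu$ gives $\int \ln a_n\,d\mu = \int \ln a_0\,d\mu = \ln A$, one has $\int \ln|\det T_n|\,d\mu = 0$ for every $n$, and combined with $\|T_n\|^2 \ge |\det T_n|$ this forces $\gamma(x) \ge 0$. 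The strict inequality on $\C^+$ then follows from the vertical monotonicity.
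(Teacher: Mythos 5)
Your treatment of parts (a), (c) and the existence of the limit in (b) is correct and, for part (a), takes a genuinely different route from the paper's: the paper deduces that $dk$ has no atoms from the positivity $\gamma>0$ combined with the Thouless formula, and then identifies the boundary values of $k_0$ through the Herglotz representation of $w$ and an integration by parts, whereas you obtain the no-atom property directly from shift invariance (via the trace identity $\sum_n\|E_J(\{x\})\delta_n\|^2=\dim\ker(J-x)$) and then derive the Poisson representation $\pi k_0(x+iy)=\int P_y(s-x)k_1(s)\,ds$ by Fubini. This is arguably cleaner, since it decouples (a) from the positivity of $\gamma$. One caveat: the paper's convention allows $a_n=0$, and then $\dim\ker(J-x)\le 2$ can fail (a diagonal $J$ has infinite-dimensional eigenspaces); you should note that $\mu\in\mathcal M_0$ together with invariance forces $a_n\neq 0$ for $\mu$-a.e.\ $J$, so the bound holds almost everywhere, which is all your argument needs.

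The positivity $\gamma>0$ on $\C^+$ is where the paper locates the real content, and your argument has a genuine gap there. The determinant inequality shows that the averaged growth rate $\tilde\gamma(x)=\lim_n\frac{1}{2n}\int\ln\|T_n(x;J)\|^2\,d\mu(J)$ is nonnegative; but you then need to know that $\tilde\gamma(x)$ coincides with the function $\gamma(x)$ defined via \eqref{defw} and the boundary limit, i.e.\ with $-\ln A+\int\ln|t-x|\,dk(t)$, \emph{pointwise at every real $x$ and for a general invariant (not necessarily ergodic) $\mu$}. That identification is precisely the hard form of the Thouless formula: even in the ergodic case the pointwise-in-$x$ version is the delicate Craig--Simon subharmonicity refinement, and for merely invariant $\mu$ the classical results do not apply off the shelf. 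This is exactly the issue the paper isolates and resolves by a separate argument: it takes $\gamma>0$ for \emph{ergodic} measures as known, regularizes by $F_\epsilon$ (adding $\epsilon$ to the $a$'s so that $\ln a_0$ becomes continuous on the relevant compact subspace), approximates the invariant measure by convex combinations of ergodic ones, and passes to the limit using Lemma \ref{Ldkcont} and monotone convergence, finishing with the observation that a nonconstant harmonic $\gamma\ge 0$ cannot attain the value $0$. You could lessen (though not remove) the difficulty by running your determinant argument at $z\in\C^+$, where the identification of the two expressions for $\gamma$ avoids boundary subtleties; but as written, ``the standard resolution identifies $\gamma(x)$ with the averaged Lyapunov exponent'' assumes the very statement that needs proof in this generality.
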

These properties are well known for \textit{ergodic }measures $\mu\in\mathcal M_0$. See, for example,
\cite[Chapter 5]{Teschl}.
A discussion of these issues for measures $\mu$ that are just invariant
may be found in \cite{CarKot}.
\begin{proof}[Sketch of proof]
Perhaps the most interesting part of this proof is the one where we establish the inequality $\gamma>0$
on $\C^+$; once this is available,
everything else will then fall into place very quickly or at least follow
from routine arguments. Let us first sketch how this can be done,
assuming, for the moment, the inequality $\gamma>0$.

Indeed, part (c) for $z\in\C^+$ is of course an immediate consequence of the
definitions of $w$ and $\gamma$. Existence of the limit from part (b) can then be deduced
from (c) by splitting the region of integration into the two parts $|t-x|\le 1$
and $|t-x|>1$ and using monotone and dominated convergence, respectively.
These considerations also extend the validity of (c) to $z\in\R$.

Next, we observe that the inequality $\gamma>0$ together with the Thouless formula
force $dk$ to be continuous measure; equivalently, $k_1(t)$ is a continuous function
on $\R$.

Define
\begin{equation}
\label{0.3}
k_0(t)=\lim_{y\to 0+} k_0(t+iy) ;
\end{equation}
the limit exists for (Lebesgue)
almost every $t\in\R$. Since $k_0(z)$ is bounded, the Herglotz representation of $w(z)$ reads
\begin{equation}
\label{2.11}
w(z) = C_0+Dz + \int_{-\infty}^{\infty} \left( \frac{1}{t-z}-\frac{t}{t^2+1} \right) k_0(t)\, dt .
\end{equation}
In fact, as $\textrm{Im }w(z)<\pi$ on $\C^+$, we must have $D=0$ here.
By differentiating \eqref{defw}, we obtain that $w'(z)=\int\frac{dk(t)}{t-z}$, so $\textrm{Im }w'(z)>0$,
or, equivalently, $\partial k_0(x+iy)/\partial x>0$ on $\C^+$. This implies that
$k_0(t)$ is an increasing function on $\R$. Originally, we could only guarantee that
$k_0(t)$ was defined off a null set $N\subset\R$,
but now we can put $k_0(s)=\lim_{t\to s-;t\notin N}k_0(t)$ for
$s\in N$ to obtain an everywhere (on $\R$) defined increasing function $k_0$.
It is also clear, by direct inspection of \eqref{defw}, that $k_0(t)=0$ for $t<-2$
and $k_0(t)=1$ for $t>2$. Thus $k_0$ generates a probability measure $dk_0$ on $[-2,2]$,
and now an integration by parts lets us rewrite \eqref{2.11} as follows:
\begin{align}
\label{wintparts}
w(z) & = C_0 + \int_{-\infty}^{\infty} \frac{\partial}{\partial t}
\left[ \ln (t-z) - \frac{1}{2}\ln(t^2 + 1) \right] k_0(t)\, dt \\
\nonumber
& = C_0 + \lim_{R\to\infty} k_0(t)\ln\frac{t-z}{\sqrt{t^2+1}} \Bigr|_{t=-R}^{t=R} \\
\nonumber
& \quad\quad
-\int_{-\infty}^{\infty} \left[ \ln (t-z) - \frac{1}{2}\ln(t^2 + 1) \right]\, dk_0(t)\\
\nonumber
& = C - \int_{-\infty}^{\infty} \ln (t-z) \, dk_0(t) .
\end{align}
Measures in Herglotz representations are unique and we can again consider $w'$, so
it follows from this that $dk_0=dk_1$. As already observed
above, $k_1$ is a continuous function on $\R$, and hence so is $k_0(t)=k_1(t)$.
Moreover, we defined $k_0(t)$, in \eqref{0.3}, as the boundary value,
Lebesgue almost everywhere,
of the bounded harmonic function $k_0(z)$, $z\in\C^+$. The Poisson representation formula
now shows that $k=k_0$ is continuous on $\C^+\cup\R$, as claimed.

So, as promised, it only remains to show that $\gamma>0$. We will in fact assume
this inequality for ergodic $\mu$. This is well known; in the ergodic case,
$\gamma$ can be related to the exponential decay rate of certain solutions to
the difference equation $Ju=zu$ (thus the term \textit{Lyapunov exponent}).
See \cite[Chapter 5]{Teschl}. So we will only explain how
to generalize the inequality to invariant $\mu$. As mentioned above, this issue is also discussed
in \cite{CarKot}; we offer an easy alternative argument here.

Let
\begin{equation}
\label{defFe}
F_{\epsilon}(J) = \frac{1}{1+\epsilon} (J+\epsilon J_0) ,
\end{equation}
where $J_0$ is the Jacobi matrix with $a_n\equiv 1$, $b_n\equiv 0$. In other words,
we essentially add $\epsilon$ to all $a$'s; the denominator $1+\epsilon$ is not essential
and is only introduced to make sure that $F_{\epsilon}(J)\in\mathcal J_2$ again.
Given an invariant measure $\mu$, let $\mu_{\epsilon}=F_{\epsilon}\mu$ be the corresponding
image measure; in other words, $\int f\, d\mu_{\epsilon} = \int f\circ F_{\epsilon}\, d\mu$.

Then $\mu_{\epsilon}$ is an invariant measure on the compact subspace
\[
\mathcal J_2^{(\epsilon )}= \{ J\in\mathcal J_2 : a_n\ge \frac{\epsilon}{1+\epsilon}
\textrm{ for all }n\in\Z \}
\]
of $\mathcal J_2$.
Since the ergodic measures are the extreme points of the set of invariant measures, there
are convex combinations $\mu_{\epsilon}^{(n)}$ of \textit{ergodic }measures $\nu_{j,n,\epsilon}$
on $\mathcal J_2^{(\epsilon)}$,
\[
\mu_{\epsilon}^{(n)} = \sum_{j=1}^{N_{n,\epsilon}} c_{j,n,\epsilon} \nu_{j,n,\epsilon} ,
\]
so that $\mu_{\epsilon}^{(n)}\to\mu_{\epsilon}$ in weak-$*$ sense as $n\to\infty$.
By the result for ergodic measures, we do have that $\gamma_{j,n,\epsilon}>0$ for the
corresponding Lyapunov exponents, and since $\gamma_{\nu}$ depends linearly on $\nu$,
it also follows that $\gamma_{\epsilon}^{(n)}>0$. Now on $\mathcal J_2^{(\epsilon )}$,
the function $J\mapsto \ln a_0(J)$ is continuous, so $\ln A_{\epsilon}^{(n)}\to
\ln A_{\epsilon}$ as $n\to\infty$.

The integrals from the Thouless formula
will also converge. To see this, we make use of the following simple fact.
\begin{Lemma}
\label{Ldkcont}
Suppose that $\mu_n\to\mu$ in weak-$*$ sense. Then also $dk_n\to dk$.
\end{Lemma}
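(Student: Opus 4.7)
The plan is to reduce the assertion to the statement that for every fixed $f\in C[-2,2]$, the evaluation $g_f(J):=\int f(t)\,d\rho_0(t;J)$ is a continuous function on $\mathcal J_2$. Once this is known, the claim follows at once from the defining relation \eqref{defDOS}: for any $f\in C[-2,2]$,
\[
\int f\,dk_n = \int_{\mathcal J_2} g_f(J)\,d\mu_n(J) \longrightarrow \int_{\mathcal J_2} g_f(J)\,d\mu(J) = \int f\,dk,
\]
by the weak-$*$ convergence $\mu_n\to\mu$ applied to the continuous function $g_f$.

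So everything reduces to the continuity of $g_f$. I would first handle the case of a monomial $f(t)=t^m$, where
\[
g_f(J) = \langle \delta_0, J^m\delta_0\rangle .
\]
Expanding, this is a finite sum whose terms are products of finitely many of the coefficients $a_n,b_n$ with $|n|\le m$; in particular, $g_f$ depends continuously on $J$ in the product topology on $\mathcal J_2$, which is precisely the topology induced by the metric $d$ of \eqref{defd}. By linearity, $g_f$ is continuous for every polynomial $f$.

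For a general $f\in C[-2,2]$, I would pass to the limit through the Weierstrass approximation theorem: pick polynomials $p_k$ with $\|f-p_k\|_\infty\to 0$. Since $\|J\|\le 2$ for every $J\in\mathcal J_2$, the spectral measure $d\rho_0(\cdot;J)$ is a probability measure supported in $[-2,2]$, and hence
\[
|g_f(J)-g_{p_k}(J)| \le \|f-p_k\|_\infty
\]
uniformly in $J$. Thus $g_f$ is the uniform limit of the continuous functions $g_{p_k}$ and so is itself continuous on $\mathcal J_2$, completing the proof. There is really no serious obstacle here; the only point worth emphasizing is that the product topology and the operator topologies agree on $\mathcal J_2$ (as noted in the introduction), so that the elementary verification on polynomials already suffices.
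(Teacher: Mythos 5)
Your proposal is correct and follows the same route as the paper: both reduce the claim to the continuity of $J\mapsto\int f(t)\,d\rho_0(t;J)$ on $\mathcal J_2$ and then apply weak-$*$ convergence of $\mu_n$ to this continuous function. The only (minor) difference is in how that continuity is verified — the paper notes that $d$-convergence is strong operator convergence, which implies weak-$*$ convergence of the spectral measures, whereas you check it by hand on monomials $\langle\delta_0,J^m\delta_0\rangle$ and pass to general $f$ by Weierstrass approximation; both verifications are valid.
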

The situation we have in mind here of course includes the assumption that
$\mu_n,\mu\in\mathcal M_0$, but the Lemma is also valid, with the same proof,
for arbitrary finite measures.
\begin{proof}[Proof of Lemma \ref{Ldkcont}]
Let $f\in C[-2,2]$. Then, from \eqref{defDOS},
\begin{align*}
\int f(t)\, dk(t) & = \int_{\mathcal J_2} d\mu(J)\int_{[-2,2]} d\rho_0(t;J)\, f(t) \\
& = \lim_{n\to\infty} \int_{\mathcal J_2} d\mu_n(J)\int_{[-2,2]} d\rho_0(t;J)\, f(t)\\
& = \lim_{n\to\infty} \int f(t)\, dk_n(t)
\end{align*}
because $J\mapsto \int f(t)\, d\rho_0(t;J)$ is a continuous function on $\mathcal J_2$.
To confirm this last claim,
it suffices to observe that convergence with respect to $d$ is equivalent
to strong operator convergence and this, in turn, implies weak-$*$ convergence
of the spectral measures $\rho_0$.
\end{proof}

Thus we now know that $\gamma_{\epsilon}^{(n)}(z)\to \gamma_{\epsilon}(z)$ on
$z\in\C^+$. In particular, it follows that $\gamma_{\epsilon}\ge 0$ there.

From the definition of $\mu_{\epsilon}$ and dominated convergence, it is also clear
that $\mu_{\epsilon}\to\mu$ in weak-$*$ sense as $\epsilon\to 0+$. Hence, as just observed,
the integrals from the Thouless formula approach the corresponding limit as $\epsilon\to 0+$.
Finally, monotone convergence shows that
\begin{align*}
\ln A_{\epsilon} & = \int\ln a_0(J)\, d\mu_{\epsilon}(J)
= \int\ln a_0(F_{\epsilon}(J))\, d\mu(J)\\
& = \int \ln \frac{a_0(J)+\epsilon}{1+\epsilon}\, d\mu(J) \to \int\ln a_0(J)\, d\mu(J) = \ln A .
\end{align*}
Hence also $\gamma_{\epsilon}\to \gamma$, so $\gamma\ge 0$.
The harmonic function
$\gamma$ is clearly not equal to a constant, hence cannot assume
a minimum value, and thus in fact $\gamma>0$ on $\C^+$.
\end{proof}
It is also useful to notice the following well known consequence of basic
potential theory at an early stage:
\begin{Lemma}
\label{L2.5}
$A\le 1$ for any $\mu\in\mathcal M_0$.
\end{Lemma}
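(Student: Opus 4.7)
The plan is to deduce $A \le 1$ from the Thouless formula by integrating against $dk$ and invoking the standard fact from logarithmic potential theory that the interval $[-2,2]$ has capacity one. First, I would note that Proposition \ref{P2.1}(b) gives $\gamma \ge 0$ on all of $\C^+ \cup \R$: strict positivity on $\C^+$ was established in the sketch above, and the non-strict inequality on $\R$ follows because the real-line values are defined as boundary limits of a nonnegative harmonic function. So, using Proposition \ref{P2.1}(c), for every $x \in [-2,2]$,
\[
0 \le \gamma(x) = -\ln A + \int_{[-2,2]} \ln|t-x|\, dk(t) .
\]

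Next I would integrate this identity with respect to $dk(x)$ (Fubini applies because the integrand is bounded above, and the only issue is a possibly large negative part, which still permits the inequality in the direction I need). This yields
\[
0 \le \int_{[-2,2]} \gamma(x)\, dk(x) = -\ln A + \int_{[-2,2]}\int_{[-2,2]} \ln|t-x|\, dk(t)\, dk(x) .
\]
The conclusion $A \le 1$ will then follow once I show that the double logarithmic integral on the right is nonpositive.

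This last point is exactly the content of the minimum-energy characterization of the equilibrium measure. Since the logarithmic capacity of $[-2,2]$ equals $1$ (a standard and explicit computation, with equilibrium measure $d\omega(t) = dt/(\pi\sqrt{4-t^2})$ and Robin constant $0$), the logarithmic energy $I(\omega) = -\iint \ln|t-x|\, d\omega(t)\, d\omega(x)$ equals $0$, and by the minimum property $I(\nu) \ge I(\omega) = 0$ for every probability measure $\nu$ supported in $[-2,2]$. Applying this to $\nu = dk$ gives $\iint \ln|t-x|\, dk(t)\, dk(x) \le 0$, and plugging into the displayed inequality above forces $-\ln A \ge 0$, i.e. $A \le 1$.

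There is no real obstacle here; the only point that requires any care is justifying the use of Fubini and the Thouless formula for $x$ in the (possibly singular) support of $dk$, which is handled by the monotone/dominated convergence considerations already invoked in the proof of Proposition \ref{P2.1}. The whole argument is a direct application of the minimum-energy principle, which is why the result is phrased as a consequence of \emph{basic} potential theory.
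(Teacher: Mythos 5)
Your argument is correct and is essentially identical to the paper's proof: integrate the Thouless formula against $dk$, use $\gamma\ge 0$, and bound the double logarithmic integral by $\ln\textrm{cap}\,[-2,2]=0$ via the energy-minimizing property of the equilibrium measure. The extra remarks on Fubini and the explicit form of $d\omega_{[-2,2]}$ are fine but not needed beyond what the paper already invokes.
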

\begin{proof}
Integrate the Thouless formula with respect to $dk$. Since $\gamma\ge 0$, we obtain that
\[
0 \le -\ln A + \int\!\!\!\int \ln |t-x|\, dk(t)\,dk(x) .
\]
By the definition of logarithmic capacity \cite[Definition 5.1.1]{Ran}, the double
integral is $\le\ln\textrm{cap }[-2,2]= 0$.
\end{proof}
This argument also shows that if $A=1$, then $dk=d\omega_{[-2,2]}$,
the equilibrium measure of $[-2,2]$. From this one quickly obtains the well known
uniqueness result that $\mu=\delta_{J_0}$ if $A=1$. We don't want to give any details
here, but see Proposition \ref{P9.2} below and its discussion for a more
general argument of this type. The material from \cite{Remuniq} is also closely related
to these issues.

We already mentioned several times the fact that $w$ provides a conformal map from
$\C^+$ onto its image. It is advantageous not to work with $w$ itself but
with a related function that is obtained by changing variables, as follows.
Notice that
\begin{equation}
\label{zzeta}
\zeta \mapsto z = z(\zeta) = -\zeta - \frac{1}{\zeta}
\end{equation}
defines a conformal map from the upper semidisk $D^+=D\cap \C^+$ onto
$\C^+$. Here, $D=\{ z: |z|<1\}$ denotes the unit disk. We can therefore introduce
\[
F: D^+\to D^+, \quad\quad F(\zeta) = e^{w(z(\zeta))} .
\]
$F$ indeed maps to the upper unit disk because $\textrm{Re }w<0$, $0<\textrm{Im }w<\pi$.
\begin{Proposition}
\label{P2.2}
$F$ has a holomorphic extension to $D$, by reflection: $F(\overline{\zeta})=
\overline{F(\zeta)}$. This extended function $F$ is
a conformal map from $D$ onto $F(D)\subset D$, with $F(0)=0$, $F'(0)=A$.
\end{Proposition}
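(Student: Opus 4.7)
My plan is to build $F$ in three steps: (1) extend from $D^+$ to $D\setminus\{0\}$ by Schwarz reflection, (2) remove the singularity at $\zeta=0$ and compute $F'(0)$, (3) prove global univalence. The first two are boundary-analytic; the third is delicate.

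On $D^+$, $F=\exp\circ w\circ z$ is holomorphic as a composition. To apply Schwarz reflection across the real diameter, I would verify real boundary values on $(-1,0)\cup(0,1)$. For $\zeta\in(0,1)$, $z(\zeta)=-\zeta-1/\zeta<-2$; by Proposition~\ref{P2.1} the boundary value $w(z(\zeta))=-\gamma(z(\zeta))$ is real, so $F(\zeta)=e^{-\gamma(z(\zeta))}>0$. For $\zeta\in(-1,0)$, $z(\zeta)>2$, and the branch convention gives $w(z(\zeta))=-\gamma(z(\zeta))+i\pi$, so $F(\zeta)<0$. Schwarz reflection then yields a holomorphic extension to $D\setminus\{0\}$ satisfying $F(\bar\zeta)=\overline{F(\zeta)}$. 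For the singularity at $\zeta=0$ I expand $w$ at infinity: writing $\ln(t-z)=\ln(-z)+\ln(1-t/z)$ in \eqref{defw} and using $\int dk=1$ gives $w(z)=\ln A-\ln(-z)-\int\ln(1-t/z)\,dk(t)$. Since $(-z(\zeta))\zeta=1+\zeta^2$, one has $\ln(-z(\zeta))=-\ln\zeta+\ln(1+\zeta^2)$ near $\zeta=0$, and the remaining integral tends to $0$ as $\zeta\to 0$ by dominated convergence. Hence $F(\zeta)=A\zeta(1+o(1))$, so the singularity is removable, $F(0)=0$, and $F'(0)=A$.

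For univalence, local injectivity is easy: on $D\setminus\{0\}$, $F'=F\cdot w'(z)\cdot z'(\zeta)$, and each factor is nonzero—$F$ because it is an exponential (or a nonzero real on the real segments), $w'(z)=\int dk(t)/(t-z)$ because it has strictly positive imaginary part on $\C^+$ and is a nonzero real for $z\in\R\setminus[-2,2]$ (as $dk$ is supported in $[-2,2]$), and $z'(\zeta)=-1+1/\zeta^2$ because it vanishes only at $\pm 1\notin D$; also $F'(0)=A>0$. To globalize, I apply the argument principle on disks $\{|\zeta|<r\}$, $r\in(0,1)$: since $\zeta=0$ is the unique simple zero of $F$ in $D$, the winding number of $\theta\mapsto F(re^{i\theta})$ around $0$ equals $1$, and by continuity the same is true around any $w_0$ in the connected component $\Omega_r$ of $\C\setminus F(\{|\zeta|=r\})$ containing $0$. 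Hence $F^{-1}(w_0)\cap\{|\zeta|<r\}$ is a single point for each $w_0\in\Omega_r$. Any two distinct preimages $\zeta_1,\zeta_2$ of a common $w_0\in F(D)$ lie in $\{|\zeta|<r\}$ for $r$ close enough to $1$ (choosing $r$ to avoid the discrete set on which $w_0\in F(\{|\zeta|=r\})$), and $w_0$ lies in $\Omega_r$ since $F(\{|\zeta|<r\})$ is open and connects $w_0$ to $0$; this forces $\zeta_1=\zeta_2$. Finally, $F(D)\subset D$ because $\gamma>0$ on $\C\setminus[-2,2]$: on $\C^+\cup\C^-$ by Proposition~\ref{P2.1}(b), and on $\R\setminus[-2,2]$ by the minimum principle applied to the harmonic function $\gamma$ (nonnegative there, tending to $\infty$ at infinity).

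The main obstacle is the last step. The boundary curve $F(\partial D)$ is typically not a Jordan curve: on a spectral gap where $k_0$ is constant, $\gamma$ varies (with a minimum somewhere in the interior of the gap) and $F$ traces a radial slit inward and then outward. This rules out any classical Darboux/Jordan-curve argument and forces the winding-number approach on interior circles, which in turn requires care in verifying that the point $w_0\in F(D)$ under consideration actually lies in the correct connected component $\Omega_r$.
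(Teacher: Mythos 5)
Your reflection step, the removable-singularity computation at $\zeta=0$, and the verification that $F(D)\subset D$ are all correct and essentially the paper's own arguments. The genuine gap is in the globalization of injectivity. The assertion that ``$w_0$ lies in $\Omega_r$ since $F(\{|\zeta|<r\})$ is open and connects $w_0$ to $0$'' does not follow: the open connected set $F(\{|\zeta|<r\})$ need not be disjoint from the curve $F(\{|\zeta|=r\})$, so a path inside it joining $0$ to $w_0$ may cross that curve, and $0$ and $w_0$ may perfectly well lie in different components of its complement. Since, by the argument principle, the winding number of $F(re^{i\theta})$ about $w_0$ \emph{equals} the number of preimages of $w_0$ in $\{|\zeta|<r\}$, asserting $w_0\in\Omega_r$ is essentially assuming what you set out to prove. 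Moreover, the hypotheses you actually feed into this step --- $F$ locally injective on $D$ with a unique simple zero at $0$ --- do not imply global injectivity. For a counterexample to the argument scheme, let $\phi$ be a conformal map of $D$ onto the rectangle $(0,10)\times(0,1)$ with $10-2\pi\le \mathrm{Re}\,\phi(0)\le 2\pi$, and put $G(\zeta)=e^{i\phi(\zeta)}-e^{i\phi(0)}$. Then $G'=i\phi'e^{i\phi}$ never vanishes, $G$ has exactly one zero in $D$ (at $\zeta=0$, simple), yet $G$ is not injective because the rectangle is wider than $2\pi$ and $e^{i\phi}$ wraps its image around the origin more than once. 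Every step of your globalization except the unproved membership $w_0\in\Omega_r$ applies verbatim to $G$; so that membership is exactly where the specific structure of $F$ must enter, and it does not come for free.

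The repair is short and uses only ingredients you have already assembled; it is what the paper does. Since $\mathrm{Im}\,w'>0$ on the \emph{convex} domain $\C^+$, the function $-iw$ has derivative with positive real part there, hence $w$ is injective on $\C^+$ (Noshiro--Warschawski), and therefore $F$ is injective on $D^+$ and, by the reflection formula, on $D^-$. Because $0<\mathrm{Im}\,w<\pi$ on $\C^+$, we have $F(D^+)\subset D^+$ and $F(D^-)\subset D^-$, while (as you computed) $F((0,1))\subset(0,1)$ and $F((-1,0))\subset(-1,0)$. Hence a collision $F(\zeta_1)=F(\zeta_2)$ with $\zeta_1\neq\zeta_2$ could only occur with both points on $(0,1)$ or both on $(-1,0)$, and there it is excluded by the strict monotonicity of $x\mapsto\gamma(x)$ on $(-\infty,-2)$ and on $(2,\infty)$, which is immediate from the Thouless formula. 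Replacing your winding-number paragraph by this symmetry-plus-monotonicity argument makes the proof complete.
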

Since, at least in general, there is some potential for confusion associated
with this terminology, we should perhaps clarify our use of language here:
by a \textit{conformal map }(also known as a biholomorphic map) we mean a holomorphic bijection
between connected open sets (also called \textit{regions }or \textit{domains});
in fact, all domains in this paper will be simply connected.
\begin{proof}
It's easy to check that if $\zeta_n\in D^+$, $\zeta_n\to x\in (-1,1)$, $x\not=0$, then
$\textrm{Im }F(\zeta_n)\to 0$. Indeed, if $-1<x<0$, say, then $z_n=-\zeta_n-1/\zeta_n \to t>2$,
and thus $k(z_n)\to 1$, by Proposition \ref{P2.1}(a). Since $F=e^{-\gamma}e^{i\pi k}$,
this gives the claim in this case. In fact, part (c) of the Proposition shows us that
$\gamma$ is continuous near such a $t$, so $F$ actually approaches a negative limit.
The case $0<x<1$ is similar; this time, $F$ converges to a positive limit.

The Schwarz reflection principle therefore provides a holomorphic extension of $F$ to
$D\setminus \{ 0\}$. To define $F$ for $\zeta\in D^-$, we refer to the
identity (``reflection'') $F(\overline{\zeta})=\overline{F(\zeta)}$.
Moreover, if $\zeta\in D^+$, $\zeta\to 0$, then $z=-\zeta-1/\zeta$ satisfies $|z|\to\infty$, so
\[
w(z)=\ln A - \int\ln(t-z)\, dk(t) = -\ln(-z) + \ln A + O(1/z)
\]
as $\zeta\to 0$ and this leads to $F(\zeta) = A\zeta + O(\zeta^2)$. It follows that the singularity
at $\zeta=0$ is removable and $F'(0)=A$, as claimed.

Finally, notice that $w$ is a conformal map from $\C^+$ onto its image.
This simply follows from the fact that
$\textrm{Im }w'(z)>0$ on $\C^+$, which we already observed (and used) in the proof
of Proposition \ref{P2.1}.
It now becomes clear that $F$ also maps $D^+$
\textit{injectively }onto a subset of $D^+$ and $D^-$ in the
same way onto the corresponding reflected subset of $D^-$. Moreover, as we observed above,
$F(I)\subset I$ for both $I=(-1,0)$ and $I=(0,1)$. Thus $F$ could fail to be injective
only if $F(x_1)=F(x_2)$ for some points $x_1, x_2$ that are either both in $(-1,0)$ or
both in $(0,1)$. However, it's easy to confirm that $\gamma(-x-1/x)$ is strictly increasing and decreasing,
respectively, on these intervals. Hence $F$ is a conformal map, as claimed.
\end{proof}
We remark in passing that the Schwarz Lemma now provides another simple proof of Lemma \ref{L2.5}.
\begin{Proposition}
\label{P2.3}
(a) The domain $\Omega:=F(D)\subset D$ is of the following type:
If $Re^{i\alpha}\in\Omega$, then
$re^{i\alpha}\in\Omega$ for all $r<R$. Also, $re^{i\alpha}\in\Omega$ if and only if
$re^{-i\alpha}\in\Omega$.

(b) A subset $\Omega\subset D$, $\Omega\not=\emptyset$ is open and
has the properties stated in part (a)
if and only if
there exists an upper semicontinuous
function $h: S^1\to [0,1)$, with $h(e^{-i\alpha})=h(e^{i\alpha})$, so that
\[
\Omega = \Omega_h\equiv \{ re^{i\alpha} : 0\le r <1-h(e^{i\alpha}) \} .
\]
\end{Proposition}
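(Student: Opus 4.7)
For part (a), I work in the $(z,w)$-picture and pull back to $(D,F)$ via $\zeta\mapsto z(\zeta)$. The symmetry $re^{i\alpha}\in\Omega\iff re^{-i\alpha}\in\Omega$ is immediate from $F(\overline{\zeta})=\overline{F(\zeta)}$. For the star-shape property, fix $Re^{i\alpha}\in\Omega$ with $\alpha\in(0,\pi)$; the endpoints $\alpha\in\{0,\pi\}$ and the reflected range $\alpha\in(-\pi,0)$ reduce to this case using the reflection identity together with the monotone description of $F$ on $(-1,0)\cup(0,1)$ recorded in the proof of Proposition \ref{P2.2}. Write $Re^{i\alpha}=F(\zeta_0)$, $z_0=z(\zeta_0)\in\C^+$, and consider the level curve
\[
L=\{z\in\C^+ : k_0(z)=\alpha/\pi\}.
\]
Two properties of $L$ drive the argument. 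First, $L$ is a single connected arc: since $\partial k_0/\partial x>0$ on $\C^+$ (from the proof of Proposition \ref{P2.1}) and $k_0(x+iy)\to 0,1$ as $x\to\mp\infty$ for each fixed $y>0$, the equation $k_0(\,\cdot\,+iy)=\alpha/\pi$ has a unique solution $\varphi(y)$, so $L=\{\varphi(y)+iy:y>0\}$ is parametrized by $(0,\infty)$. Second, $\gamma$ is unbounded along $L$: the Thouless formula gives $\gamma(z)=\ln|z|-\ln A+o(1)$ as $|z|\to\infty$, so $\gamma\to\infty$ as $y\to\infty$ along $L$. By connectedness, $\gamma(L)$ is a connected subset of $(0,\infty)$ that contains $\gamma(z_0)=-\ln R$ and is unbounded above, hence includes the ray $[-\ln R,\infty)$. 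Along $L$ the function $w$ has constant imaginary part $\alpha$ and real part $-\gamma$, so exponentiation gives $(0,R]e^{i\alpha}\subset F(z^{-1}(L))\subset\Omega$; combined with $F(0)=0$, we obtain $\{re^{i\alpha}:0\le r\le R\}\subset\Omega$.

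For part (b), given a nonempty open $\Omega\subset D$ with the properties of (a), define
\[
\rho(e^{i\alpha})=\sup\{r\ge 0:re^{i\alpha}\in\Omega\},\qquad h(e^{i\alpha})=1-\rho(e^{i\alpha}).
\]
Star-shape gives $\Omega\cap\{se^{i\alpha}:s\ge 0\}=[0,\rho(e^{i\alpha}))\,e^{i\alpha}$, so $\Omega=\Omega_h$; the symmetry of $\Omega$ transfers to $h$; and $h<1$ because the open $\Omega$ contains $0$ (which lies in $\Omega$ by star-shape applied to any point of $\Omega$). Upper semicontinuity is the statement that $\{h<c\}=\{\rho>1-c\}$ is open, and this set is exactly the radial projection of the open set $\Omega\cap\{|z|>1-c\}$. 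Conversely, given $h$ with the stated properties, $\Omega_h$ is symmetric and star-shaped by construction. For openness: $h$ attains its maximum $M<1$ on the compact circle $S^1$, so $B(0,1-M)\subset\Omega_h$ handles the origin; for $r_0e^{i\alpha_0}\in\Omega_h$ with $r_0>0$, choose $\epsilon>0$ with $r_0+2\epsilon<1-h(e^{i\alpha_0})$, and by upper semicontinuity obtain an open arc $U\subset S^1$ about $e^{i\alpha_0}$ on which $h<h(e^{i\alpha_0})+\epsilon$; then $\{re^{i\alpha}:|r-r_0|<\epsilon,\,e^{i\alpha}\in U\}$ is an open neighborhood of $r_0e^{i\alpha_0}$ contained in $\Omega_h$.

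\textit{Expected difficulty.} Part (b) is essentially bookkeeping once the description $\rho=1-h$ is adopted. The substance lies in (a), specifically in verifying that each level curve $L$ is a single connected arc on which $\gamma$ is unbounded; both ingredients — strict monotonicity of $k_0$ in the real variable and the Thouless asymptotics of $\gamma$ at infinity — are already in hand, so I expect no genuinely new obstacle beyond a careful intermediate-value argument on $L$.
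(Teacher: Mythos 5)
Your proof is correct, and for part (a) it takes a genuinely different (though closely related) route from the paper's. You work in the domain $\C^+$ with the level curve $L=\{k_0=\alpha/\pi\}$, show it is a single arc graphed over $y\in(0,\infty)$ using $\partial k_0/\partial x>0$ and the limits $k_0(x+iy)\to 0,1$ as $x\to\mp\infty$, and then run the intermediate value theorem on $\gamma|_L$, which is unbounded by the Thouless asymptotics. The paper instead slices the \emph{image} domain $w(\C^+)$ horizontally, assumes a hypothetical bounded component $(a,b)$ of the slice $L_v$, and derives a contradiction by a compactness argument on the preimages $z(u)$ (bounded because $\gamma\to\infty$ at infinity), concluding the left endpoint $a$ would itself lie in $w(\C^+)$. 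Both arguments rest on exactly the same two facts — strict monotonicity of $k_0$ in $x$ and the growth of $\gamma$ at infinity — but yours is direct and constructive: it exhibits a preimage of every point $re^{i\alpha}$, $0<r\le R$, rather than ruling out a gap by contradiction; the paper's version avoids having to establish global connectedness of the level set. That connectedness (equivalently, continuity of $\varphi$) is the one step you should make explicit: it follows from the implicit function theorem ($k_0$ is harmonic and $\partial_x k_0>0$), or from a short compactness argument using that the limits $k_0\to 0,1$ are uniform for $y$ in compact subsets of $(0,\infty)$, so that solutions $\varphi(y_n)$ stay bounded and must converge to $\varphi(y)$. Your treatment of $\alpha=0,\pi$ is only gestured at, but the ingredients you cite (reflection symmetry plus strict monotonicity of $\gamma(-\zeta-1/\zeta)$ on $(0,1)$ and $(-1,0)$, with $\gamma\to\infty$ at $\zeta\to 0$) are precisely what the paper uses there, via the identity $\Omega\cap(0,1)=\{e^{-\gamma(x)}:x<-2\}$. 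Part (b) is essentially the paper's argument, with a slightly more explicit equivalence between openness of $\Omega_h$ and upper semicontinuity of $h$ via the radial projection of $\Omega\cap\{|z|>1-c\}$; this is fine.
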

In other words, $\Omega$ is the unit disk with radial slits
\[
S_{\alpha}=\{ re^{i\alpha} : 1-h(e^{i\alpha})\le r\le 1 \}
\]
removed; the function
$h(e^{i\alpha})$ records the height
of the slit at angle $\alpha$.
\begin{proof}
(a) We first discuss the corresponding claim about
the region $w(\C^+)\subset \{ u+iv: u<0, 0<v<\pi \}$.
Fix $v$ and put
\[
L_v=\{ u\in\R : u+iv\in w(\C^+) \} .
\]
We want to show that $L_v = (-\infty, u_0(v))$. If this were not true, then either
$L_v=\emptyset$, or there is an interval $(a,b)\subset L_v$,
with $a,b\notin L_v$ and $a<b\le 0$. This follows because $L_v$ is open, so if
this set is non-empty and not just a half-line, then we can take some other component,
which will necessarily be bounded. Now $L_v=\emptyset$ is clearly impossible
because $k$ takes values arbitrarily close to $0$ and also other values that come
arbitrarily close to $1$, and $w(\C^+)$ is connected.

Take preimages, that is, write $u+iv=w(z(u))$ for
$a<u<b$, and with $z(u)\in\C^+$.
Clearly, $z(u)\equiv x(u)+iy(u)$ is a continuous function of $u\in (a,b)$. Moreover,
$y(u)$ is injective. This follows because $\textrm{Im }w'(z)>0$, as we observed above, so
\begin{equation}
\label{0.71}
\frac{\partial k (x+iy)}{\partial x} > 0 .
\end{equation}
Hence it is not possible for
two points $z_1, z_2$ with the same imaginary part to have images $w(z_1), w(z_2)$
whose imaginary parts agree also.

So $y(u)$ must be monotone, and in fact it's not hard to check that
$y(u)$ is strictly decreasing on $(a,b)$ (but we don't really need to know this here since
an analogous argument would work for strictly increasing $y(u)$).
Notice also that the $z(u)$ stay inside a bounded set, because
$\gamma(z)\to\infty$ as $|z|\to\infty$. Thus, on a suitable sequence $u_n\to a$,
we have that $z(u_n)\to z=x+iy$, and here $y>0$. It follows that $a+iv=w(z)\in w(\C^+)$,
but this contradicts our choice of $a$.

By transforming back to $F$ and $\Omega$, we now obtain the first property of
$\Omega$ for $0<\alpha<\pi$, and, by reflection, also for $-\pi<\alpha<0$.
Here, we have already made use of the invariance of $\Omega$ under
reflection about the real line, but this second property is really obvious from the
corresponding symmetry of $F$.

Next, consider $\alpha=0$. If we recall our discussion of the mapping properties of $F$
from the proof of Proposition \ref{P2.2}, then we see that the positive values of $F(\zeta)$
come from the $\zeta\in (0,1)$. For these $\zeta$, the variable $z=x=-\zeta-1/\zeta$
varies over $(-\infty, -2)$, so
\begin{equation}
\label{0.1}
\Omega\cap (0,1)= \{ e^{-\gamma(x)} : x<-2 \} .
\end{equation}
The Thouless formula (Proposition \ref{P2.1}(c)) shows that $\gamma(x)$ is strictly decreasing
on $x<-2$, and $\gamma(x)\to\infty$ as $x\to -\infty$, so this set is a ray
$(0,R)$, as claimed. The argument for $\alpha=\pi$ is, of course, analogous.

(b) Any domain $\Omega$ with the properties just established is equal
to a domain $\Omega_h$, if we simply define
\begin{equation}
\label{2.31}
h(e^{i\alpha}):= \sup \{ r\ge 0 : (1-r)e^{i\alpha}\notin\Omega \} .
\end{equation}
Furthermore, it is also clear that only this choice of $h$ can possibly work
if it is our goal to represent a given $\Omega$ as an $\Omega_h$ for some $h$.

Conversely, given any function $h:S^1\to [0,1)$, we can form the set $\Omega_h$.
This set will always contain $0$. It is open if and only if
$h$ is upper semicontinuous, and it
is invariant under reflection about the real line
if and only if $h$ is symmetric.
Thus, given $\Omega$ as described in part (a),
$h$ defined by \eqref{2.31} has these properties. Conversely, if an upper semicontinuous,
symmetric $h$ is given, then $\Omega_h$ will be as described in (a).
\end{proof}
We are now in a position to
appreciate why it was useful to change variables and work with
$F$ and $\Omega=F(D)\subset D$ rather than $w$ and $w(\C^+)\subset S=\{ x+iy: x<0,
0<y<\pi \}$. Since always $F(0)=0$, $F'(0)>0$, the conformal map $F$ can be
reconstructed, at least in principle, from its image $\Omega=F(D)$. This is not
true for $w$. Indeed, if $\mu=\delta_{AJ_0}$, where $J_0$ denotes the free Jacobi matrix
$a_n\equiv 1$, $b_n\equiv 0$, then $w_A(z)=w_0(z/A)$, and $w_0$ maps $\C^+$ onto
the full strip $S$. This latter statement
follows easily without any calculation from simple properties of $dk$ and $\gamma$
for the free Jacobi matrix $J_0$, but one can also use the explicit formula
\[
w_0(z)=\ln \left( -\frac{z}{2}+\sqrt{\frac{z^2}{4}-1}\right)
\]
instead. Here, we would have to clarify the precise definitions of the logarithm and the
square root, but in fact a much more transparent formulation is obtained if we just say that
$F_0(\zeta)=\zeta$.

So $w_A(\C^+)=S$ for all $0<A\le 1$, and the
image under $w$ does \textit{not }distinguish between these $w$ functions.
The domains $\Omega_A\subset D$, on the other hand, have slits at $\alpha=0,\pi$
of $A$ dependent heights, so are not equal to one another.
One can verify directly that these slits become invisible if we transform
back to $w$ and $z$. Theorem \ref{T6.1} below will throw some additional light
on this issue.

The slit height function $h$ is closely related to the Lyapunov exponent $\gamma$.
In fact, it is essentially $\gamma$, plus the change of variables $F=e^w$, $\alpha=\pi k(t)$.
\begin{Theorem}
\label{T2.1}
For $0\le \alpha \le \pi$, we have that
\[
h\left( e^{i\alpha}\right) = 1- e^{-L(\alpha)} ,
\]
where
\[
L(\alpha) = \sup \{ \gamma(t): -2\le t\le 2, \pi k(t)=\alpha \} .
\]
\end{Theorem}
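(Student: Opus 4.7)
The plan is to translate the slit-height $h(e^{i\alpha})$ into a statement about the boundary behavior of $w$, then exploit Proposition \ref{P2.3}(a) together with the injectivity of $w$ and the continuity of $\gamma,k$ on $\C^+\cup\R$. Since $F=e^{w}=e^{-\gamma+i\pi k}$, the definition of $h$ gives
\[
1-h(e^{i\alpha}) = \sup\{r\ge 0 : re^{i\alpha}\in\Omega\} ,
\]
and for $0<\alpha<\pi$ this equals $\sup\{e^{-\gamma(z)}:z\in\C^+,\,\pi k(z)=\alpha\}=e^{-\widetilde{L}(\alpha)}$, where $\widetilde{L}(\alpha):=\inf\{\gamma(z):z\in\C^+,\,\pi k(z)=\alpha\}$. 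By Proposition \ref{P2.3}(a), the horizontal slice of $w(\C^+)$ at height $\alpha$ is $(-\infty,u_0(\alpha))$, so $\widetilde{L}(\alpha)=-u_0(\alpha)$ and this infimum is \emph{not} attained in $\C^+$. The theorem thus reduces to showing $\widetilde{L}(\alpha)=L(\alpha)$.

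For $L(\alpha)\le\widetilde{L}(\alpha)$: given $t\in[-2,2]$ with $\pi k(t)=\alpha$, I would show $-\gamma(t)+i\alpha\notin w(\C^+)$. Otherwise, conformality of $w$ gives $-\gamma(t)+i\alpha=w(z^*)$ for a unique $z^*\in\C^+$, and picking any $z_n\in\C^+$ with $z_n\to t$, continuity of $w$ on $\C^+\cup\R$ combined with the continuous inverse $w^{-1}$ on $w(\C^+)$ forces $z_n\to z^*\in\C^+$, contradicting $z_n\to t\in\R$. Hence $-\gamma(t)\ge u_0(\alpha)$, i.e., $\gamma(t)\le\widetilde{L}(\alpha)$; supremizing in $t$ gives the desired inequality.

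For $\widetilde{L}(\alpha)\le L(\alpha)$: choose $z_n\in\C^+$ with $\pi k(z_n)=\alpha$ and $\gamma(z_n)\to\widetilde{L}(\alpha)$. The Thouless formula shows $\gamma(z)\to\infty$ as $|z|\to\infty$, so $(z_n)$ is bounded; pass to a subsequence with $z_n\to z_\infty\in\overline{\C^+}$. Since $\widetilde{L}(\alpha)$ is not attained in $\C^+$, necessarily $z_\infty\in\R$; continuity of $\gamma,k$ on $\C^+\cup\R$ (Proposition \ref{P2.1}) then yields $\gamma(z_\infty)=\widetilde{L}(\alpha)$ and $\pi k(z_\infty)=\alpha\in(0,\pi)$, which forces $z_\infty\in(-2,2)$. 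Thus $z_\infty$ is in the set defining $L(\alpha)$, giving $L(\alpha)\ge\widetilde{L}(\alpha)$.

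The endpoint cases $\alpha\in\{0,\pi\}$ are handled separately: equation \eqref{0.1} together with strict monotonicity of $\gamma$ on $(-\infty,-2)$ gives $1-h(1)=e^{-\gamma(-2)}$, while the Thouless formula shows $\gamma$ is decreasing on the interval $\{t\in[-2,2]:k(t)=0\}$ (only $t$ lying to the right of $x$ contribute to the integral), so $L(0)=\gamma(-2)$; the case $\alpha=\pi$ is symmetric. I expect the main obstacle to be the direction $\widetilde{L}(\alpha)\le L(\alpha)$, where one must guarantee that a minimizing sequence in $\C^+$ accumulates at a point of $[-2,2]$ at which the relevant relations transfer to the boundary: the growth of $\gamma$ at infinity rules out escape to $\infty$, and the non-attainment of $\widetilde{L}$ in $\C^+$ rules out interior accumulation.
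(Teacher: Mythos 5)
Your overall strategy is the same as the paper's: the reduction to $\widetilde{L}(\alpha)=L(\alpha)$ is exactly the paper's identity $H(v)=L(v)$ in the strip picture, the first inequality is proved by the same injectivity/open-mapping argument, and the second by the same bounded-minimizing-sequence argument (the paper phrases it as a contradiction). There is, however, one step that rests on a property the paper does not establish and which is false in general: you invoke ``continuity of $w$ (resp.\ of $\gamma$) on $\C^+\cup\R$.'' Proposition \ref{P2.1} gives continuity of $k$ on $\C^+\cup\R$ but for $\gamma$ only the existence of the \emph{vertical} limits $\gamma(x)=\lim_{y\to0+}\gamma(x+iy)$; the extended function $\gamma$ is merely upper semicontinuous on $\C$ (via the Thouless formula and Fatou), and Theorems \ref{T6.1}, \ref{T6.2} make clear that continuity of $\gamma$ up to $[-2,2]$ is a genuine extra condition that can fail. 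In your first inequality this is harmless: take the approach sequence vertically, $z_n=t+iy_n$ with $y_n\downarrow 0$, so that $w(z_n)\to-\gamma(t)+i\alpha$ by Proposition \ref{P2.1}(a),(b), and the rest of the argument (continuity of $w^{-1}$ at $w(z^*)$, which is the same device as the paper's open-mapping contradiction) goes through.

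In the second inequality the appeal to continuity does real work and must be replaced. From $z_n\to z_\infty\in\R$ and $\gamma(z_n)\to\widetilde{L}(\alpha)$ you cannot conclude $\gamma(z_\infty)=\widetilde{L}(\alpha)$; what upper semicontinuity gives is
\[
\gamma(z_\infty)\;\ge\;\limsup_{n\to\infty}\gamma(z_n)\;=\;\widetilde{L}(\alpha),
\]
and fortunately this one-sided estimate points in exactly the direction you need: continuity of $k$ yields $\pi k(z_\infty)=\alpha$, hence $z_\infty\in(-2,2)$ lies in the admissible set, and therefore $L(\alpha)\ge\gamma(z_\infty)\ge\widetilde{L}(\alpha)$. (This is precisely how the paper closes the argument, there stated as a contradiction with the upper semicontinuity of $\gamma$.) With this repair the proof is complete; your treatment of the endpoints $\alpha=0,\pi$ via \eqref{0.1} and the monotonicity of $\gamma$ on $\{k=0\}$ is correct and in fact more explicit than the paper's.
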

If $t\in E=\textrm{\rm top supp }dk$ and $t$ is not an endpoint
of a component $(a,b)\subset (-2,2)\setminus E$, then there is no $s\not=t$ with $k(s)=k(t)$ and thus
for these $t$, the formula above takes the simpler form
\[
h\left( e^{i\pi k(t)} \right) = 1-e^{-\gamma(t)} .
\]
Recall in this context that $\textrm{top supp }dk$,
the \textit{topological support }of $dk$, is defined
as the smallest closed subset $E\subset\R$ with $k(E^c)=0$.

Also, the set $k^{-1}(\{ \alpha/\pi \})\cap [-2,2]$
is either a single point or a closed interval $[a,b]$, because
$k(t)$ is increasing and continuous. In the second case, the interior
$(a,b)$ is a component of $(-2,2)\setminus E$.
\begin{proof}
It is again more convenient to discuss the analogous claim about
the region $w(\C^+)$. So, for $0<v<1$, define
\[
H(v)=\sup \{ u\ge 0 : -u+i\pi v \notin w(\C^+) \} .
\]
We want to show that
\begin{equation}
\label{2.1}
H(v) = L(v) .
\end{equation}

Now for any $t\in (-2,2)$ with $k(t)=v$, we certainly have that
$-\gamma(t)+i\pi v\notin w(\C^+)$. Indeed, if $-\gamma(t)+i\pi v=w(z_0)$ for
some $z_0\in\C^+$, then, by open mapping, the image of a small disk $D_r(z_0)$ under $w$ would include
a disk about $-\gamma(t)+i\pi v$, but at least some of these points also occur as images of
$t+iy$ for small $y>0$, and this contradicts the fact that $w$ is injective. Thus $H(v)\ge L(v)$.

On the other hand, if we had that $H(v)>L(v)$, say
\begin{equation}
\label{0.4}
H(v)\ge\gamma(t)+\epsilon
\end{equation}
for all $t\in (-2,2)$
with $k(t)=v$, then we can again look at the preimages of $-u+i\pi v$ for $u>H(v)$. As in the proof of
Proposition \ref{P2.3}, write $-u+i\pi v=w(z(u))$.
We now let $u$ approach $H(v)$. As above, the $z(u)$ will stay inside a bounded set, so
will converge to a limit $t_0\in\C^+\cup\R$ along a suitable subsequence. In fact,
$t_0\in\C^+$ is impossible here because then $-H(v)+i\pi v=w(t_0)$ would lie
in $w(\C^+)$. Thus $t_0\in\R$. Since $k$ is continuous on $\C^+\cup\R$, we can conclude that
$k(t_0)=v$, and now \eqref{0.4}
demands that $\gamma(t_0)\le H(v)-\epsilon$. The function $\gamma$ is upper semicontinuous,
so this inequality would prevent $u=\gamma(z(u))$ from approaching $H(v)$ when we send
$u\to H(v)$ along the subsequence chosen above. We can escape this absurd situation
only by abandoning \eqref{0.4}.
We have established \eqref{2.1}.

This gives the Theorem for $\alpha\not= 0, \pi$. The remaining cases $\alpha=0,\pi$
do not pose any problems; it suffices to refer to what we discussed already above.
See especially \eqref{0.1}.
\end{proof}
\section{Data sets}
Let us summarize: Starting out from an invariant measure $\mu\in\mathcal M_0$
on $\mathcal J_2$, we
introduced the density of states $dk$ as the average of the spectral measures $\rho_0$
and $\ln A=\int\ln a_0\, d\mu>-\infty$. These have the property that
\begin{equation}
\label{cond1}
-\ln A + \int_{[-2,2]} \ln |t-z|\, dk(t) \ge 0 \quad \textrm{for }z\in\C .
\end{equation}
We then introduced a variety of additional data, which were computed from
$(A,dk)$. We will now show that we
can go back and forth between these.
More precisely, each of the following is determined by
and will determine $(A,dk)$:
\begin{itemize}
\item the $w$ function $w(z)$ on $z\in\C^+$;
\item the Lyapunov exponent $\gamma(z)$ on $z\in\C^+$;
\item the conformal map $F:D\to D$;
\item the image domain $\Omega=F(D)$;
\item the slit height function $h$
\end{itemize}
We will also identify the classes of objects obtained in this way. For easier reference,
we give names to the corresponding sets.
\begin{Definition}
\label{D3.6}
We say that:\\[0.2cm]
(1) $(A,dk)\in\mathcal D$ \textit{(density of states) }if $A>0$
and $dk$ is a probability measure on the Borel sets
of $[-2,2]$ and \eqref{cond1} holds;\\[0.2cm]
(2) $W\in\mathcal W$ \textit{($w$ function) }if $W, W'$ are
Herglotz functions, $W$ maps $\C^+$ to the strip
$S=\{x+iy: x<0, 0<y<\pi\}$, $W'$ extends holomorphically to $\C\setminus [-2,2]$
by reflection $W'(\overline{z})=\overline{W'(z)}$ and $\lim_{y\to\infty} yW'(iy)=i$;\\[0.2cm]
(3) $\Gamma\in\mathcal L$ \textit{(Lyapunov exponent) }if $\Gamma, \partial \Gamma/\partial y$
are positive harmonic
functions on $\C^+$, $\Gamma$ extends harmonically to $\C\setminus [-2,2]$ by reflection
$\Gamma(\overline{z})=\Gamma(z)$, and
\[
\lim_{y\to\infty}\frac{\Gamma(iy)}{\ln y} = 1 ;
\]
(4) $G\in\mathcal C$ \textit{(conformal map)}
if $G:D\to\Omega$ is a conformal map onto a region $\Omega\subset D$
of the type described in Proposition \ref{P2.3}, with $G(0)=0$, $G'(0)>0$;\\[0.2cm]
(5) $\Omega\in\mathcal R$ \textit{(region) }if $\Omega\subset D$
is a region of the type described in
Proposition \ref{P2.3};\\[0.2cm]
(6) $g\in\mathcal H$ \textit{(height function) }if $g:S^1\to [0,1)$
is a symmetric ($g(e^{-i\alpha})=g(e^{i\alpha})$)
upper semicontinuous function.
\end{Definition}
\begin{Theorem}
\label{Tdata}
If $(A,dk)\in\mathcal D$ is given, then the associated data $w$, $\gamma$, $F$, $\Omega$, $h$ have
the properties from parts (2)--(6) of Definition \ref{D3.6}. Conversely, if an
object of one of these types is given, then there exists a unique pair $(A,dk)\in\mathcal D$ that
is associated with it.
\end{Theorem}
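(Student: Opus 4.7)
The plan is to organize the proof around $w$ as a hub: I will verify the equivalence $\mathcal{D} \leftrightarrow \mathcal{W}$ in detail, and then route each of the other objects through $w$. Since $(A,dk) \in \mathcal{D}$ produces $w \in \mathcal{W}$ via \eqref{defw}, and each forward spoke was essentially already established in Section 2, the real work will be in the reverse direction.

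For the forward direction, I will collect the conditions required by Definition \ref{D3.6}. For $w \in \mathcal{W}$: Herglotz-ness follows from Proposition \ref{P2.1}(b) and $0<k_0<1$; differentiating \eqref{defw} gives $w'(z)=\int dk(t)/(t-z)$, which is Herglotz, extends off $[-2,2]$ by reflection, and satisfies $yw'(iy)\to i$ because $dk$ is a probability measure on $[-2,2]$. For $\gamma \in \mathcal{L}$: positivity of $\gamma$ is Proposition \ref{P2.1}(b), positivity of $\partial\gamma/\partial y = \pi \partial k_0/\partial x$ follows from Cauchy--Riemann and \eqref{0.71}, and the reflection extension and the asymptotic $\gamma(iy)/\ln y \to 1$ both fall out of the Thouless formula. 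For $F \in \mathcal{C}$, $\Omega \in \mathcal{R}$, and $h \in \mathcal{H}$: the relevant statements are Propositions \ref{P2.2}, \ref{P2.3}(a), and \ref{P2.3}(b) respectively, together with the observation that $h<1$ since $\Omega$ is open at the origin.

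For the reverse direction, I will show that each type of object determines a $w \in \mathcal{W}$, then extract $(A,dk)$ from $w$ and verify \eqref{cond1}. From $w$: the Herglotz representation of $w'$ yields a probability measure $dk$ on $[-2,2]$ (supported there by the reflection extension of $w'$ and of total mass $1$ by $yw'(iy)\to i$), and I recover $A$ via $\ln A := \lim_{y\to\infty}[\ln y + \textrm{Re}\, w(iy)]$; the two Herglotz functions $w(z)$ and $\ln A - \int \ln(t-z)\,dk(t)$ have identical derivatives and identical asymptotic behavior at $iy$, hence coincide, which gives the Thouless formula, and \eqref{cond1} then follows from $\gamma > 0$ on $\C^+$ combined with continuity of the logarithmic potential and the minimum principle for superharmonic functions. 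From $\gamma \in \mathcal{L}$: I set $w'(z):=-\partial_x\gamma+i\partial_y\gamma$ (Herglotz, since $\partial_y\gamma>0$) and integrate, fixing the additive constant by requiring $\textrm{Im}\, w(x+i0)=0$ for $x<-2$, then reduce to the previous case. From $F \in \mathcal{C}$: I define $w(z):=\log F(\zeta(z))$ using the inverse of \eqref{zzeta}, with the branch determined by $F(D^+)\subset D^+$. From $\Omega \in \mathcal{R}$: the Riemann mapping theorem produces a unique conformal $F:D\to\Omega$ with $F(0)=0$, $F'(0)>0$, and symmetry of $\Omega$ together with the uniqueness of this normalized map forces $F(\overline{\zeta})=\overline{F(\zeta)}$, so $F \in \mathcal{C}$. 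From $h \in \mathcal{H}$: I form $\Omega_h$, which lies in $\mathcal{R}$ by Proposition \ref{P2.3}(b), and reduce to the previous case.

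Uniqueness will be automatic at each link, since Herglotz representations, normalized harmonic conjugates, and normalized Riemann maps are all unique; hence every map in the chain is a bijection onto its image, and the composed bijection $(A,dk) \leftrightarrow$ (object) is as claimed. The main obstacle I anticipate is verifying that the $w$ constructed from $F \in \mathcal{C}$ genuinely lies in $\mathcal{W}$, specifically that $w'$ extends by reflection across $\R \setminus [-2,2]$ and satisfies $yw'(iy)\to i$. The first point requires using that the slits of $\Omega$ are radial with $F(\pm r)$ real on the two real diameters of $D$, so that $\log F$ is real there and extends by Schwarz reflection; this then pushes through the change of variables to give a reflection extension of $w'$ across $\R \setminus [-2,2]$. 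The second point requires tracking the expansion $F(\zeta)=A\zeta + O(\zeta^2)$ near $\zeta=0$ through the substitution $z=-\zeta-1/\zeta$, so that $w(z)=\ln A-\ln(-z)+O(1/z)$ as $z\to\infty$ in $\C^+$, which upon differentiation gives the required normalization of $w'$.
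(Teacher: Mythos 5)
Your treatment of the $\mathcal W$ and $\mathcal L$ cases tracks the paper's proof closely and is fine (one quibble: condition \eqref{cond1} on $[-2,2]$ follows from $\textrm{Re}\,w<0$ by monotone convergence, $\gamma(x)=\lim_{y\to 0+}\gamma(x+iy)\ge 0$, not from ``continuity of the logarithmic potential,'' which can fail). The genuine problem is in the step ``From $F\in\mathcal C$,'' which is precisely where the paper's argument lives. You define $w(z)=\log F(\zeta(z))$ and then want to ``reduce to the previous case,'' i.e.\ invoke the $\mathcal W$ argument. But membership in $\mathcal W$ requires that $w'$ itself be a Herglotz function --- equivalently, that $\partial(\textrm{Im}\,w)/\partial x>0$, i.e.\ that the would-be integrated density of states $k=\frac{1}{\pi}\arg F$ is monotone. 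This is the one condition you do not list among your anticipated obstacles, and it is not automatic from the shape of the image domain: it amounts to showing that the boundary correspondence of the conformal map onto a general radially slit disk is monotone in the angular variable, which for an arbitrary upper semicontinuous slit height function $h$ requires the prime-end machinery (the boundary of $\Omega_h$ need not be locally connected, so $F$ need not even extend continuously to $\overline D$). Without this, the Herglotz representation of $w'$ that your reduction relies on is unavailable. The classical starlikeness criterion $\textrm{Re}\,(\zeta F'/F)>0$, which the radial-slit geometry does give you for free, is a different inequality and does not yield $\textrm{Im}\,w'>0$ directly.

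This is exactly the difficulty the paper circumvents: it first proves the statement for finite gap domains (Lemma \ref{L3.11}), where the prime ends can be listed by hand and monotonicity of $k$ is checked directly, and then reaches a general $\Omega\in\mathcal R$ by the discretization \eqref{defh1}--\eqref{defh2}, kernel convergence of domains (Lemma \ref{L3.41}), Carath\'eodory's theorem for the maps, and weak-$*$ compactness of the measures $(A_n,dk_n)$, using Lemma \ref{P3.42} to rule out degeneration. Your direct route is salvageable --- one can argue that $F$ extends to a homeomorphism of $\overline D$ onto the prime-end compactification, that the angular coordinate is non-decreasing along the positively oriented prime-end boundary of a radially slit disk, and hence that the a.e.\ boundary values of $\textrm{Im}\,w$ are non-decreasing on $\R$, which gives $\textrm{Im}\,w'\ge 0$ via the Poisson representation --- but as written the proposal asserts the reduction without supplying this argument, and it is the crux of the conformal-map half of the theorem.
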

At this point, this statement seems to be of conditional type because we have not yet shown
that every $(A,dk)\in\mathcal D$ is actually obtained from an invariant measure $\mu\in\mathcal M_0$,
and indeed, we will leave this issue completely aside in this section and the next.
However, as we will discuss later, this statement is true; see Theorem \ref{T5.1} below.
For now, it will be important to observe that nowhere in the developments that started
with Proposition \ref{P2.2} did
we use the fact that $(A,dk)$ were obtained from a $\mu\in\mathcal M_0$; rather, it was
only property \eqref{cond1} that mattered. Similarly, Proposition \ref{P2.1} continues to
hold if we just assume \eqref{cond1}.

We again witness the effect that things become particularly transparent on the level
of the conformal maps. Note, for instance, that items (2), (3) from
Definition \ref{D3.6} come with a sizeable amount of fine print,
and contrast this with the satisfying fact that all symmetric upper semicontinuous
functions occur as slit height functions.

In one part of the proof, we will make use of several classical results on conformal
maps and their boundary values. This material will also be important in subsequent
sections, so let us give a brief review now.

The first tool is the notion of \textit{kernel convergence }for the image domains $\Omega$.
For a careful discussion of this topic in a general setting, please see
\cite[Section 15.4]{Con}. We give the basic definition
in the form most suitable for our purposes here, and specialized to the case that
is of interest to us.
\begin{Definition}
\label{D3.1}
Let $\Omega_n, \Omega \subset D$ be subdomains of the unit disk of the type discussed
in Proposition \ref{P2.3}. We say that $\Omega_n\to\Omega$ in the sense of
\textit{kernel convergence }if:\\
(i) If $z\in\Omega$, then there exist a radius $r=r(z)>0$ and an index
$N=N(z)$ so that $D_r(z)\subset\Omega_n$ for all $n\ge N$.\\
(ii) If $z\notin\Omega$ and $r>0$ are given, then there exists $N=N(z,r)$ so that
$D_r(z)$ is not contained in $\Omega_n$ if $n\ge N$.
\end{Definition}
To confirm that this is indeed what \cite[Definition 15.4.1]{Con} says
in the present context, observe that the \textit{kernel }with respect to $z_0=0$
(as defined in \cite{Con}) of a
sequence of domains of the type $\Omega_{h_n}$, if it exists,
is another domain of the type $\Omega_h$. In particular, there is no need to take
a specific connected component of the set introduced in \cite{Con}. The general
definition of a kernel also demands that $D_r(0)\subset\Omega_n$ for some $r>0$
and all large $n$, but this is a consequence of (i) here because we always have that $0\in\Omega$.

This notion is important for us here because kernel convergence of the image domains is equivalent
to the locally uniform convergence of the conformal maps from $D$ onto these domains.
We will return to these issues shortly,
but let us first give a characterization of kernel convergence in terms
of the associated slit height functions $h$.
\begin{Lemma}
\label{L3.41}
Let $\Omega_n, \Omega\subset D$ be domains of the type discussed in Proposition \ref{P2.3},
and let $h_n, h$ be the associated slit height functions. Then the following are equivalent:\\
(a) $\Omega_n\to\Omega$ in the sense of kernel convergence;\\
(b) $\sup \varphi h_n \to \sup\varphi h$ for every $\varphi\in C(S^1)$,
$\varphi\ge 0$.
\end{Lemma}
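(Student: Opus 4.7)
I would prove the two implications separately, exploiting the explicit parametrization $\Omega = \Omega_h$ and translating geometric conditions on the domains into pointwise conditions on the slit height functions. Throughout, upper semicontinuity of $h$ on the compact $S^1$, combined with $h < 1$, ensures that suprema such as $\sup \varphi h$ are attained and strictly less than $\sup \varphi$.

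For (a) $\Rightarrow$ (b), fix $\varphi \ge 0$ continuous and set $M = \sup \varphi h$, attained at some $e^{i\alpha_0}$. For the lower bound $\liminf \sup \varphi h_n \ge M$, apply Definition \ref{D3.1}(ii) to the boundary point $z_0 = (1 - h(e^{i\alpha_0}))e^{i\alpha_0} \notin \Omega$. A diagonal extraction (using radii $r = 1/m$) produces points $w_n = s_n e^{i\beta_n} \notin \Omega_n$ with $w_n \to z_0$; then $h_n(e^{i\beta_n}) \ge 1 - s_n$, and continuity of $\varphi$ yields $\sup\varphi h_n \ge \varphi(e^{i\beta_n})(1-s_n) \to M$. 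For the upper bound $\limsup \sup \varphi h_n \le M$, assume for contradiction that $\varphi(e^{i\alpha_{n_k}}) h_{n_k}(e^{i\alpha_{n_k}}) > M + \varepsilon$ along a subsequence; extract further so that $\alpha_{n_k} \to \alpha^*$ and $h_{n_k}(e^{i\alpha_{n_k}}) \to h^*$. Either $h^* > h(e^{i\alpha^*})$, which places the limit $z^* = (1 - h^*)e^{i\alpha^*}$ inside $\Omega$ and contradicts Definition \ref{D3.1}(i) (since a disk around $z^*$ eventually in $\Omega_n$ must contain bottom-of-slit points of $\Omega_n$, which lie outside $\Omega_n$), or $h^* \le h(e^{i\alpha^*})$, giving $\varphi(e^{i\alpha^*}) h(e^{i\alpha^*}) \ge M + \varepsilon$, contradicting the definition of $M$.

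For (b) $\Rightarrow$ (a), the strategy is to verify the two conditions of Definition \ref{D3.1} using bump functions that localize at the angular coordinate of a given test point. For (i), given $z_0 = r_0 e^{i\alpha_0} \in \Omega$ with $z_0 \ne 0$, upper semicontinuity of $h$ yields a compact neighborhood $K$ of $e^{i\alpha_0}$ on which $\sup_K h < 1 - r_0 - \mu$ for some $\mu > 0$. Take $\varphi \in C(S^1)$ to be a $[0,1]$-valued plateau bump equal to $1$ on a smaller neighborhood $V$ and supported in $K$. Hypothesis (b) then bounds $\sup \varphi h_n$, which forces $h_n \le 1 - r_0 - \mu/2$ on $V$ for large $n$, and a sufficiently small disk around $z_0$ therefore fits inside $\Omega_n$. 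The case $z_0 = 0$ follows from $\varphi \equiv 1$, since $\sup h < 1$. For (ii), given $z_0 = r_0 e^{i\alpha_0} \in D \setminus \Omega$ (so $h(e^{i\alpha_0}) \ge 1 - r_0$), take $\varphi$ a narrow bump at $e^{i\alpha_0}$ with $\varphi(e^{i\alpha_0}) = 1$; then $\sup \varphi h \ge 1 - r_0$, so (b) produces $\beta_n$ in the support of $\varphi$ with $h_n(e^{i\beta_n}) \ge 1 - r_0 - \delta$. Hence $(r_0 + \delta)e^{i\beta_n} \notin \Omega_n$, and for $\varphi$ supported sufficiently close to $e^{i\alpha_0}$ this point lies in $D_r(z_0)$. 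The case $|z_0| \ge 1$ is immediate from $\Omega_n \subset D$.

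The main obstacle is coupling geometric data (points inside versus outside $\Omega_n$) with the analytic data ($\sup \varphi h_n$) uniformly enough to pass to the limit. The clean way through is to use upper semicontinuity of $h$ to upgrade bounds at a single angle $\alpha_0$ into bounds on a full neighborhood, so that a well-chosen bump function $\varphi$ isolates precisely the angular region that encodes the local behavior of the slit.
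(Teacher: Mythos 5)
Your proof is correct and follows essentially the same route as the paper's: both directions are handled by localizing with nonnegative bump functions, using upper semicontinuity of $h$ to upgrade pointwise information to neighborhood bounds, and extracting convergent subsequences of near-maximizing angles to derive contradictions with conditions (i) and (ii) of kernel convergence. The only differences (a diagonal extraction in the lower bound rather than a direct application of Definition \ref{D3.1}(ii) with a single $\delta$, and a two-case split in the upper-bound contradiction) are cosmetic.
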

\begin{proof}
We first verify that (b) implies (a).
Let's start with condition (i) from Definition \ref{D3.1}. Fix an arbitrary point
$z\in\Omega_h$, say $z=re^{i\alpha}$. Then $r<1-h(e^{i\alpha})$.
The case $r=0$ is easy: We have that $\sup h <1$, so condition (b) with $\varphi\equiv 1$
shows that also $\sup h_n \le 1-\delta$, uniformly in $n$, for some $\delta>0$, and thus $D_{\delta}(0)
\subset \Omega_{h_n}$ for all $n$. So we can now assume that $0<r<1-h(e^{i\alpha})$.
Since $h$ is upper semicontinuous, we will have that $h\le 1-r-2\epsilon$, say, on a suitable
neighborhood of $e^{i\alpha}$, for some $\epsilon >0$.
We can now use (b) with a function $\varphi$ that is supported
by this neighborhood, equal to $1$ on a smaller neighborhood of $e^{i\alpha}$, and takes
values $0\le\varphi\le 1$. Assumption (b) then says that for all sufficiently large $n$,
we will also have that $h_n(e^{i\beta})\le 1-r-\epsilon$, say, uniformly on some neighborhood
$|\beta-\alpha|\le\eta$. In particular, this shows that $D_{\delta}(z)\subset\Omega_{h_n}$
for all these $n$, if we take $\delta<\min\{\epsilon, r\eta/100 \}$, say.

Let's now move on to condition (ii) from Definition \ref{D3.1}. We are
given a $z\notin\Omega_h$ and a radius $\delta>0$. The assumption that
$z=re^{i\alpha}\notin\Omega_h$ means that $r\ge 1-h(e^{i\alpha})$.
Pick a function $0\le\varphi\le 1$ that is supported by $|\beta-\alpha|\le \delta/10$
and equal to $1$ at $e^{i\alpha}$. Condition (b) then provides angles
$\beta_n$ from this neighborhood so that $h_n(e^{i\beta_n})\ge 1-r-\delta/2$ for all large $n$.
In particular, this shows that $D_{\delta}(z)$ is not contained in $\Omega_{h_n}$ for these $n$,
as desired. This concludes the proof of the implication (b) $\Longrightarrow$ (a).

We now want to show that, conversely, (a) implies (b). Fix $\varphi\in C(S^1)$, $\varphi\ge 0$.
We would first
like to show that
\[
\liminf_{n\to\infty} (\sup \varphi h_n)\ge \sup \varphi h .
\]
The upper semicontinuous
function $\varphi h$ assumes a maximum on the compact set $S^1$,
so $\sup\varphi h = \varphi(e^{i\alpha})h(e^{i\alpha})$
for some $e^{i\alpha}\in S^1$. We may assume here that $\varphi(e^{i\alpha})h(e^{i\alpha})>0$ because
otherwise what we're trying to show is trivially true. In fact, for convenience, let's also assume
that $\varphi(e^{i\alpha})=1$.
We have that $(1-h(e^{i\alpha}))e^{i\alpha}\notin\Omega_h$, and now (ii) from Definition \ref{D3.1}
shows that for
any $\delta>0$ and all large $n\ge N_0=N_0(\delta)$,
we must have $h_n(e^{i\beta})\ge h(e^{i\alpha})-\delta$
somewhere on $\alpha-\delta<\beta<\alpha+\delta$, say.
Since $\varphi$ is continuous, it will satisfy $\varphi\ge 1-\eta$ on this interval,
and here $\eta>0$ can be made arbitrarily small, provided we start the argument with a
sufficiently small $\delta>0$.
Putting things together, we conclude that
$\sup\varphi h_n \ge\sup\varphi h-\delta-\eta$ for all large $n$.
As discussed, $\delta+\eta$ can be made arbitrarily small here, so this is what we wished to show.

It remains to prove that also
\begin{equation}
\label{3.1}
\sup\varphi h \ge \limsup_{n\to\infty} (\sup\varphi h_n) .
\end{equation}
Again, the suprema
are really maxima, attained at $e^{i\alpha_n}$, say. We can now pass to a subsequence on which
we converge to the $\limsup$ from the right-hand side of \eqref{3.1}, and then pass to a
subsequence a second time to make the points converge, say $\alpha_n\to\alpha$.
If \eqref{3.1} were wrong, we would have that $\varphi(e^{i\alpha})h(e^{i\alpha})\le
\varphi(e^{i\alpha_n})h_n(e^{i\alpha_n})-\epsilon_0$, for some
$\epsilon_0>0$ and all large $n$ from the subsequence that was chosen.
Since $\varphi$ is continuous, it would then also follow that
\begin{equation}
\label{3.2}
h(e^{i\alpha})\le h_n(e^{i\alpha_n})- \epsilon ,
\end{equation}
for these $n$ and some new (possibly smaller) discrepancy $\epsilon>0$.
Now obviously $z_0:=(1-h(e^{i\alpha})-\epsilon)e^{i\alpha}\in\Omega_h$,
but \eqref{3.2} says that given any radius $\delta>0$, no matter how small, the corresponding
disk $D_{\delta}(z_0)$ will not be contained in $\Omega_{h_n}$ for infinitely many choices
of $n$. This contradicts condition (i) from Definition \ref{D3.1}.
\end{proof}
The second set of classical results on conformal maps that will play an important role here
deals with the boundary values of these functions. The fundamental
result in its general form says that a conformal map $F: D\to\Omega$ extends to a homeomorphism
$F:\overline{D}\to\widehat{\Omega}$, where $\widehat{\Omega}$ is the union of $\Omega$ with
the collection of its \textit{prime ends, }endowed with a suitable topology. Please see
\cite[Sections 14.2, 14.3]{Con} for a careful discussion; the result just mentioned is stated
as Theorem 3.4 of \cite[Section 14.3]{Con}. For now, we will need the theory of prime
ends only for regions of a relatively simple type; later on, in Section 6,
prime ends will make another
appearance. In both cases, the material from \cite[Sections 14.2, 14.3]{Con}
will provide more than adequate background.

After these digressions, we now return to Theorem \ref{Tdata}. When we prove this,
one assignment will be the task to construct $(A,dk)\in\mathcal D$, given a region
$\Omega\in\mathcal R$. For regions of a certain simple type, this problem admits
an explicit solution, and we will base our treatment of the general case on this.

More precisely, call a domain $\Omega\in\mathcal R$
a \textit{finite gap domain }if the corresponding slit height function $h$ is non-zero
only at finitely many points. So these are regions with finitely many slits;
we call them finite gap domains because they correspond to \textit{finite gap Jacobi matrices,}
that is, reflectionless Jacobi matrices whose spectrum is a \textit{finite gap set}
(a disjoint union of finitely many compact intervals of positive length).
\begin{Lemma}
\label{L3.11}
Suppose that $\Omega\in\mathcal R$ is a finite gap domain. Then there exists
a finite gap set $E\subset [-2,2]$
so that $\Omega$ is the region associated with $A=\textrm{\rm cap }E$, $dk=d\omega_E$.
\end{Lemma}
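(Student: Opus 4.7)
The plan is to produce the finite gap set $E$ directly from $\Omega$ as the topological support of $dk$, and then identify $\gamma$ with the Green's function $g_E$ of $\widehat{\C}\setminus E$ with pole at infinity, which automatically yields $A=\textrm{cap}\,E$ and $dk=d\omega_E$. Here the pair $(A,dk)\in\mathcal D$ is extracted from $\Omega$ by reversing the Section 2 construction: the Riemann map $F\colon D\to\Omega$ normalized by $F(0)=0$, $F'(0)>0$ determines $w$ via $w(z(\zeta))=\ln F(\zeta)$, where $z(\zeta)=-\zeta-1/\zeta$, and then $(A,dk)$ is read off from the Herglotz representation of $w'$. Set $E:=\textrm{top supp}\,dk\subset[-2,2]$.

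First I verify that $E$ is a finite gap set. By the Thouless formula, $\gamma$ is harmonic on the connected open set $\C\setminus E$. Theorem \ref{T2.1} sets up a bijection between the connected components of $[-2,2]\setminus E$ and the slits of $\Omega$: each such gap carries a constant value $\alpha/\pi$ of $k$ and maps to a single slit at angle $\alpha$. Distinct gaps must yield distinct constants (otherwise $k$ would be constant across an interval meeting $E$, contradicting the definition of topological support), and each gap must contribute a slit (otherwise $\gamma$ would vanish on a nonempty open subinterval of $[-2,2]\setminus E$, hence by real-analyticity of the harmonic function $\gamma$ on $\C\setminus E$ on all of $\C\setminus E$, contradicting Proposition \ref{P2.1}(b)). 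Since $\Omega$ has only finitely many slits, there are finitely many gaps; and since $dk$ has no atoms (Proposition \ref{P2.1}(a)), $E$ has no isolated points. Thus $E$ is a disjoint union of finitely many compact intervals of positive length.

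Next I verify that $\gamma$ satisfies the characterizing properties of $g_E$. For $t$ in the interior of a band component of $E$, $k$ is strictly increasing in a neighborhood of $t$, so $\pi k(t)$ is not one of the (finitely many) slit angles of $\Omega$; Theorem \ref{T2.1} then forces $\gamma(t)=0$. Continuity of $\gamma$ on $\R$, which follows from the Thouless formula together with the absence of atoms in $dk$ (standard continuity of logarithmic potentials against continuous measures), extends this to $\gamma=0$ on all of $E$. Combined with harmonicity on $\C\setminus E$, non-negativity (Proposition \ref{P2.1}(b) and reflection), and the asymptotic $\gamma(z)=\ln|z|-\ln A+o(1)$ as $|z|\to\infty$ (again from the Thouless formula), uniqueness of the Green's function of $\widehat{\C}\setminus E$ with pole at infinity forces $\gamma=g_E$ and, matching the constant term at infinity, $A=\textrm{cap}\,E$. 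Equating the logarithmic potential representations for $\gamma$ and $g_E$ and taking the distributional Laplacian in $z$ yields $dk=d\omega_E$.

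The most delicate point is ruling out gaps without slits in the first step, which rests on the combinatorics of Theorem \ref{T2.1} together with the real-analyticity of $\gamma$ on the connected open set $\C\setminus E$ and the strict positivity $\gamma>0$ on $\C^+$. Once this --- and the complementary identification $\gamma=0$ on $E$ via the same bijection --- is in place, the remainder of the argument is classical logarithmic potential theory applied to an explicit Green's function characterization.
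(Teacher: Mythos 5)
Your proposal founders on its very first step. You write that the pair $(A,dk)\in\mathcal D$ ``is read off from the Herglotz representation of $w'$,'' where $w(z(\zeta))=\ln F(\zeta)$ --- but the existence of such a representation, with $dk$ a probability measure on $[-2,2]$ and no linear or constant terms, is precisely the substantive content of the lemma, not something you may assume. To read off $(A,dk)$ you must first know that $w'$ is a Herglotz function, equivalently that $k=\frac{1}{\pi}\,\textrm{Im}\,w$ is (the harmonic extension of) an increasing function on $\R$; this is a genuinely geometric fact about conformal maps onto comb-type domains and is exactly what the paper's prime-end analysis of $\partial\Omega$ delivers (continuous extension of $W$ to $\R$, monotonicity of $k$, constancy on the complementary intervals of a finite gap set, strict monotonicity on that set). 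You cannot instead invoke the general $\Omega\mapsto(A,dk)$ direction of Theorem \ref{Tdata}, because in the paper that direction is proved by approximating with finite gap domains and applying Lemma \ref{L3.11} itself --- the appeal would be circular. All of your subsequent steps (Proposition \ref{P2.1}, Theorem \ref{T2.1}, the gap--slit count, the Green's function identification) are stated and proved for data satisfying \eqref{cond1}, so they become available only after this first step is actually carried out.

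Two smaller points. First, your claim that continuity of $\gamma$ on $\R$ ``follows from the Thouless formula together with the absence of atoms in $dk$'' is false: logarithmic potentials of continuous measures need not be continuous (they are only upper semicontinuous in the relevant sign convention), and upper semicontinuity goes the wrong way for propagating $\gamma=0$ from a dense subset of $E$ to all of $E$. The fix is to settle for $\gamma=0$ quasi-everywhere on $E$ (your Theorem \ref{T2.1} argument gives this off a finite set) and then run the Frostman/energy-maximization argument of Proposition \ref{P9.2}, which only needs $\gamma=0$ $\omega_E$-a.e.; this is in fact what the paper suggests. Second, granting the first step, your endgame via the Green's function $g_E$ is a legitimate variant of the paper's direct identification of $-\textrm{Re}\,W+C$ as the equilibrium potential of $E$; the two are equivalent, and the gap--slit bookkeeping you propose is essentially the content of Theorem \ref{T6.1} specialized to the finite gap case. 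But as written the proof plan omits the one part of the argument that requires real work.
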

Here, $\textrm{cap }E$ again denotes the logarithmic capacity of $E$, and $\omega_E$ is the
equilibrium measure of $E$. Please see \cite{Ran,SaTot} for background information on potential theory.
The proof will show that $E$ can be obtained as the inverse image of $\partial D$ under the
(extended) conformal map $z\mapsto F=e^{w(z)}$.

Note also that $\int\ln |t-z|\, d\omega_E(t)\ge\ln\textrm{cap }E$ for all $z\in\C$
for a finite gap set $E$, so \eqref{cond1} holds and thus $(A,d\omega_E)$ is an admissible
set of data from the class $\mathcal D$.
\begin{proof}
Let $F:D\to\Omega$ the unique conformal map onto $\Omega$ with $F(0)=0$, $F'(0)>0$.
It is easy to find the set of prime ends for
a finite gap region $\Omega$. We can conveniently identify this set with a set built
from the boundary $\partial \Omega$ as follows. We use two copies of each slit
(minus its end point)
$\{re^{i\alpha}: 1-h(e^{i\alpha})<r\le 1 \}$. Let's call these $S_+(\alpha)$ and
$S_-(\alpha)$. Then there is a natural bijection between the prime ends of
$\Omega$ and the union of these $S_{\pm}$ with the rest of $\partial \Omega$.
Moreover, using this identification, we can also easily describe the topology
of $\widehat{\Omega}$, the union of $\Omega$ and its prime ends.
The topology is in fact the obvious one, if we think of $\widehat{\Omega}$ as
the union of $\Omega$ and its boundary, but with each slit having two ``sides,''
and points from one side of a slit are not close to those from the other side.
More formally, we can say that if $(re^{i\alpha},+)\in S_+(\alpha)$, say, then a neighborhood base
is given by the sets
\[
U_{\epsilon}=\{ pe^{i\beta} : |p-r|<\epsilon, \alpha<\beta<\alpha+\epsilon \} \cup
\{ (pe^{i\alpha},+): |p-r|<\epsilon \}
\]
for small $\epsilon >0$. Of course, similar descriptions are available at other points,
but we will leave the matter at that.

Recall that we know from \cite[Theorem 14.3.4]{Con} that $F$ extends to a homeomorphism
$F:\overline{D}\to\widehat{\Omega}$. In particular, $F$ maps $\partial D$ homeomorphically
onto the prime ends of $\Omega$. By mapping the prime ends back to the correponding points
in the complex plane,
we also obtain a continuous map $F_0$ from $\partial D$ onto $\partial\Omega$ (the boundary is
now taken as a subset of $\C$). This map is not a homeomorphism; every point on a (half-open)
slit has two preimages.
The inverse image of $\partial D$ under this map $F_0$ is a finite disjoint union of subarcs
of $\partial D$; the number of subarcs is equal to the number of slits.

We now transform back to a putative $W$ function, using the change of variables from Section 2.
Observe that
since $\Omega$ is invariant under reflection about the real axis, so is $F$: we have
that $F(\overline{\zeta})=\overline{F(\zeta)}$. This implies that $F(D\cap\R)\subset D\cap\R$,
and since $F'(0)>0$, we also see that $F(D^+)\subset D^+$, $F(D^-)\subset D^-$, where we again abbreviate
$D^{\pm}=D\cap\C^{\pm}$. Thus we can take a holomorphic logarithm on $D^+$ and define
$W(z)=\ln F(\zeta)$, with $0<\textrm{Im }W(z)<\pi$ for $z\in\C^+$, and
$z$ and $\zeta$ are related by \eqref{zzeta}. This function $W$ maps $\C^+$ conformally onto
the strip $S=\{x+iy: x<0, 0<y<\pi\}$ with finitely many horizontal slits of the type
\[
S(y,d)=\{ x+iy: -d\le x \le 0 \}
\]
removed.
What we just said about the boundary behavior of $F$ and $F_0$ translates into similar statements
about $W$. More precisely, $W$ extends continuously to the boundary $\partial \C^+=\R$ and
maps $\R$ onto the union of $\partial S$ with the slits $S(y_j,d_j)$. Every
point of $S(y_j,d_j)\setminus\{ -d_j+iy_j \}$ has two preimages, all other boundary points have
one preimage.

The points $z\in\R\setminus [-2,2]$ correspond to $\zeta\in (-1,1)$, which are not in the
boundary of the original domain $D$, but of course that is no problem at all because $F$
is holomorphic there and thus definitely extends continuously. Somewhat greater care is
required to handle possible slits at $\alpha=0,\pi$. Here, we observe that we obtain
precisely one side of such a slit as the image of $F$, restricted to $\overline{D^+}$. This
follows from the reflection symmetry of $F$.

Let
\[
E=W^{-1}\left( \{ iy: 0\le y\le \pi \} \right) .
\]
As explained above, $E$ is a finite gap set; it is the inverse image under \eqref{zzeta}
of a finite disjoint union of closed subarcs of $\partial D$. Since, under \eqref{zzeta},
only the $z\in [-2,2]$ produce values $\zeta\in\partial D$, we also know that
$E\subset [-2,2]$.

Next, write $W=-\gamma+i\pi k$. Since $k$ is continuous up to the real line,
the Herglotz representation of this function reads
\[
W(z) = C_0 +Bz + \int_{-\infty}^{\infty} \left( \frac{1}{t-z}-\frac{t}{t^2+1} \right)
k(t)\, dt .
\]
Clearly, the fact that $W$ maps to $S$ forces $B=0$. Moreover, $k$ is an increasing
function. To see this, first recall that $W$ maps the interior of $E$ bijectively
onto $\{iy: 0<y<\pi\}\setminus \{ iy_j\}$. Thus $k$ is monotone on each interval
from $E$. On the other hand, if $(a,b)\subset E^c$, then we have to map this set
under $W$ to the union
of the slits and the top and bottom parts of $\partial S$. As $(a,b)$ is connected,
we in fact have to map to a single such horizontal segment, and we now see that
$k$ is constant on $(a,b)$. Putting things together, we conclude that $k$ is monotone on $\R$.
Finally, arguments $t<-2$ correspond to $\zeta\in (0,1)$, and since $F'(0)>0$, these
get mapped to positive values again under $F$, hence $k(t)=0$ for these $t$. Similarly,
$k(t)=1$ for $t>2$.

To summarize: $k(t)$ is strictly increasing on the interior of $E$ and constant
on each component of the complement, and $k$ increases from $0$ to $1$.
In particular, $k$ generates a probability measure
$dk$ that is supported by $E$.

We can now run the integration by parts calculation from \eqref{wintparts} again.
We obtain that
\begin{equation}
\label{0.2}
W(z) = C - \int_{[-2,2]} \ln (t-z) \, dk(t) .
\end{equation}
This formula was derived for $z\in\C^+$, but it remains valid for the continuous
extension of $W$ to $z\in\R$ because $\textrm{Re }W<0$ on $\C^+$, and now the arguments
from the proof of Proposition \ref{P2.1} yield \eqref{0.2} on $z\in\R$ also.

Let's take a look at
\[
\Phi(z)\equiv -\textrm{Re }W(z)+C =  \int_{[-2,2]} \ln |t-z|\, dk(t) .
\]
From the mapping properties of $W$, we know that $\Phi=C$ on $E$, the support of $dk$,
but $\Phi<C$ on $\C\setminus E$. These properties
identify $\Phi$ as the equilibrium potential of the set $E$ (so $dk=d\omega_E$) and $e^C$ as
the logarithmic capacity of $E$; see \cite[Theorem I.3.1]{SaTot} and also Remark 1.5
from Section I.1 of this reference.

So if we use these data $(A,dk)=(\textrm{cap }E, d\omega_E)\in\mathcal D$ as our input, then we will
obtain the finite gap domain $\Omega\in\mathcal R$ we started out with.
\end{proof}
We are now ready for the
\begin{proof}[Proof of Theorem \ref{Tdata}]
We will focus on the existence part (``Conversely, ...'') exclusively. Indeed,
except for small details, which we leave to the reader to fill in, our discussion
from Section 2
has already shown that the data we introduced have the stated properties. As mentioned above,
it is important to
note here that our arguments only used \eqref{cond1}; it was not essential that in the original setting,
$(A,dk)$ were obtained from a measure $\mu\in\mathcal M_0$.
It is also easy to see that each of the data from
Definition \ref{D3.6} determines $(A,dk)$, so we will not spend any time on uniqueness, either.

With these preliminaries out of the way,
suppose now that a $W\in\mathcal W$ is given. We want to construct
$(A, dk)\in\mathcal D$ so that
\[
W(z)=\ln A - \int\ln(t-z)\, dk(t) .
\]
The properties of $W'$ in particular ensure that $\overline{W'(x)}=W'(x)$ for
$x\in\R\setminus [-2,2]$, that is, $W'$ is real at these points. Therefore, the Herglotz representation
of $W'$ takes the form
\begin{equation}
\label{3.16}
W'(z) = C+Dz + \int_{[-2,2]} \frac{dk(t)}{t-z} ,
\end{equation}
with a finite measure $dk$ and $C\in\R$, $D\ge 0$. In fact, the asymptotics of $W'$
immediately imply that $C=D=0$, $dk(\R)=1$. Thus indeed
\[
W(z) = B - \int\ln (t-z)\, dk(t) .
\]
As usual, we take the logarithm with imaginary part in $(0,\pi)$ here. By assumption
$0<\textrm{Im }W<\pi$ on $\C^+$, and we can now consider $W(Re^{i\alpha})$ with
$0<\alpha<\pi$ and large $R>0$ to conclude that $\textrm{Im }B=0$. In other words,
we can indeed write $B=\ln A$ for some $A>0$, and since also $\textrm{Re }W<0$ by assumption,
it then follows that $(A,dk)$ satisfy condition \eqref{cond1}, as required.

Assume now that we are given a function $\Gamma\in\mathcal L$. The argument, unsurprisingly,
will be quite similar to what we just did.
Introduce $W(z)=-\Gamma(z)+i\pi K(z)$, where
$\pi K$ is a harmonic conjugate of $-\Gamma$ on $\C^+$. This determines
$K$ up to a constant, which will be irrelevant here and can be chosen arbitrarily.
The Cauchy-Riemann equations show that
\[
\textrm{\rm Im }W'(x+iy)=
\pi \frac{\partial K(x+iy)}{\partial x} = \frac{\partial \Gamma (x+iy)}{\partial y} > 0
\]
on $\C^+$. In other words, $W'$ is a Herglotz function.

Consider now the extended function $\Gamma$ on $\C\setminus [-2,2]$. By assumption,
$\Gamma(x+iy)$ is an even function of $y\in\R$ for fixed $|x|>2$. Thus $\partial\Gamma/\partial y$
is odd, and, in particular,
\[
\frac{\partial\Gamma (x+iy)}{\partial y} \Bigr|_{y=0} = 0.
\]
In terms of $W'$, this says that the imaginary part
of this function is zero on $\R\setminus [-2,2]$. Thus the associated measure is supported
by $[-2,2]$ and finite, and we again have a representation of the type \eqref{3.16}.
Integrate and take real parts to obtain that
\[
\Gamma(z) = -Cx - \frac{1}{2}\, D (x^2-y^2) + B + \int_{[-2,2]}\ln |t-z|\, dk(t) .
\]
Since $\Gamma>0$, we must have that $C=D=0$ here, and then the information on the
asymptotics from Definition \ref{D3.6} shows that $dk$ is a probability measure.
We can again write $B=-\ln A$, with $A>0$, and \eqref{cond1} is of course automatic.

In the remaining parts, we will not give a direct construction of $(A,dk)$. Instead,
we will approximate and then make use of compactness properties. More specifically,
recall that we already discussed
the case of a finite gap domain in Lemma \ref{L3.11}, and we will approximate a general
domain by these. So assume now that a $G\in\mathcal C$ is given, let $\Omega=G(D)$
be the corresponding image domain, and denote the associated slit height function by $h$.

Let
\begin{equation}
\label{defh1}
h_n(e^{i\alpha}) = \begin{cases} H_n(j) & \alpha = j\pi/n \quad (j= 0, 1, \ldots, n) \\
0 & \textrm{otherwise} \end{cases} ;
\end{equation}
more precisely, we define $h_n$ by such a formula for $0\le\alpha\le\pi$ and then
extend symmetrically to the lower semicircle. Here, the $H_n(j)$ are defined as follows:
\begin{equation}
\label{defh2}
H_n(j) = \sup_{-1/n\le\delta\le 1/n}
h\left( e^{i\pi (j/n + \delta)}\right) .
\end{equation}
It is then clear that the $h_n$ are slit height functions of finite gap domains $\Omega_n$.
We claim that $h_n\to h$ in the sense that the condition from
part (b) of Lemma \ref{L3.41} holds.
The argument is quite similar to what we did in the second part of the proof of this Lemma.
Let $\varphi\in C(S^1)$, $\varphi\ge 0$. From the definition of $h_n$, we have that
if $h_n(e^{i\alpha})>0$, then
$h_n(e^{i\alpha})=h(e^{i\beta_n})$ for some
$\beta_n=\beta_n(\alpha)$ with $|\beta_n-\alpha|\le \pi/n$. Hence
\[
\varphi(e^{i\alpha})h_n(e^{i\alpha}) = \varphi(e^{i\beta_n})h(e^{i\beta_n}) + R_n(\alpha) ,
\]
and here the error $R_n$ may be estimated by the modulus of continuity of $\varphi$:
\[
\left| R_n\right| \le \omega_{\pi/n}(\varphi)\equiv
\sup_{|\delta|\le \pi/n, \theta\in\R}
\left| \varphi(e^{i(\theta+\delta)}) - \varphi(e^{i\theta}) \right| .
\]
Since $\varphi$ is uniformly continuous on $S^1$, we have that $\omega_{\pi/n}\to 0$ as
$n\to\infty$, and it follows that $\limsup(\sup\varphi h_n)\le\sup\varphi h$.

On the other hand, $\sup\varphi h$ is attained at some point $e^{i\alpha}\in S^1$, and, by construction,
$h_n(e^{i\beta_n})\ge h(e^{i\alpha})$ at some point $|\beta_n-\alpha|\le \pi/n$. Since
$\varphi$ is continuous, this implies that $\liminf(\sup\varphi h_n)\ge\sup\varphi h$.

Lemma \ref{L3.41} now informs us that
$\Omega_n\to\Omega$ in the sense of kernel convergence.
By Carath{\'e}odory's Theorem \cite[Theorem 15.4.10]{Con}, the kernel convergence
of the image domains is equivalent to the locally uniform convergence of the conformal
maps $G_n: D\to\Omega_n$ (normalized, as usual, by agreeing that $G_n(0)=0$, $G'_n(0)>0$),
to the limit $G$.

By Lemma \ref{L3.11}, $G_n(\zeta)=e^{w_n(z)}$ and
\begin{equation}
\label{3.21}
w_n(z)=\ln A_n -\int\ln (t-z)\, dk_n(t)
\end{equation}
for certain data $(A_n,dk_n)\in\mathcal D$ (we actually have much more explicit information
on what these are, but will not use this here).
We can now pass to a subsequence (which, for better readability, we will not make explicit
in the notation) so that $A_n\to A$ and $dk_n\to dk$ in weak-$*$ sense.
Recall in this context that $A_n=G'_n(0)$, and since $G'_n(0)\to G'(0)>0$,
we can be sure that $A>0$. The measure $dk$ is a probability measure on $[-2,2]$.

Taking limits in \eqref{3.21}, we
conclude that
\[
w_n(z)\to w(z) \equiv \ln A - \int\ln (t-z)\, dk(t)
\]
on $z\in\C^+$.
Thus $G_n(\zeta)=e^{w_n(z)}\to e^{w(z)}$, and it follows that $G=e^w$. Put differently,
$G$ is obtained from $(A,dk)$. Since $G(D)\subset D$, it follows that $\textrm{Re }w<0$, so
\eqref{cond1} holds and $(A,dk)\in\mathcal D$.

If a domain $\Omega\in\mathcal R$
or a slit height function $g\in\mathcal H$ is given, we can define an associated
conformal map $G:D\to\Omega$ (with $\Omega=\Omega_g$ in the latter case) and then
use this treatment to again produce a pair $(A,dk)\in\mathcal D$ that corresponds
to the data that were given.
\end{proof}
The question of whether and how compact subsets of $\R$ can be approximated by
periodic spectra (that is, spectra of periodic Jacobi matrices) has received some
attention, and completely satisfactory answers were obtained in at least three independent
works. These are \cite{Bogat,Pehers,Totik} but see also \cite[Sections 5.6, 5.8]{SimRice}
for a comprehensive discussion. In all four cases, the effort needed was not
inconsiderable. The approximation procedure implemented above, see \eqref{defh1}, \eqref{defh2},
together with material that we will discuss in the following section, could be used
to give a tremendously simplified treatment.
\section{Convergence of data}
Most of the data sets introduced in the previous section come with natural
topologies. It seems reasonable to ask what the relations between these are.
\begin{Theorem}
\label{T3.1}
Suppose that $(A_n,dk_n), (A,dk)\in\mathcal D$, and form the associated objects, as above. Then
the following are equivalent:\\
(a) $A_n\to A$ and $dk_n\to dk$ in weak-$*$ sense;\\
(b) $w_n(z)\to w(z)$ locally uniformly on $\C^+$;\\
(c) $\gamma_n(z)\to\gamma(z)$ locally uniformly on $\C^+$;\\
(d) $F_n(\zeta)\to F(\zeta)$ locally uniformly on $D$;\\
(e) $\Omega_n\to\Omega$ in the sense of kernel convergence;\\
(f) $\sup \varphi h_n \to \sup\varphi h$ for every $\varphi\in C(S^1)$,
$\varphi\ge 0$.
\end{Theorem}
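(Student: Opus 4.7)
My plan is to establish the chain $(a)\Rightarrow(b)\Rightarrow(c)\Rightarrow(a)$ among the analytic data, $(b)\Leftrightarrow(d)$ via the conformal change of variables from Section 2, and then invoke the two external facts $(d)\Leftrightarrow(e)$ (Carath\'eodory's kernel convergence theorem, already used in the proof of Theorem \ref{Tdata}) and $(e)\Leftrightarrow(f)$ (Lemma \ref{L3.41}) to close the argument.

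The step $(a)\Rightarrow(b)$ is routine: since $t\mapsto\ln(t-z)$ is continuous on $[-2,2]$ for fixed $z\in\C^+$, weak-$*$ convergence of $dk_n$ together with $A_n\to A$ gives pointwise convergence $w_n\to w$ on $\C^+$ directly from \eqref{defw}. Each $w_n$ maps into the strip $S$, and in fact $\gamma_n$ is uniformly bounded on every compact $K\subset\C^+$ (the Thouless formula, since $\ln|t-z|$ is bounded on $[-2,2]\times K$ and $\ln A_n$ is bounded); so Vitali's theorem upgrades pointwise to locally uniform convergence. The implication $(b)\Rightarrow(c)$ is immediate as $\gamma_n=-\mathrm{Re}\,w_n$.

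The main obstacle is $(c)\Rightarrow(a)$: one must recover both the measures and the scalars from the Lyapunov exponents, even though $\ln|t-z|$ fails to be a continuous test function at real $z$. I would sidestep this by a compactness-plus-uniqueness argument that works entirely on $\C^+$. By Lemma \ref{L2.5} and weak-$*$ compactness of probability measures on $[-2,2]$, any subsequence admits a further subsequence along which $A_{n_k}\to\tilde A\in[0,1]$ and $dk_{n_k}\to d\tilde k$. The case $\tilde A=0$ is impossible: at any fixed $z\in\C^+$, the integral term in the Thouless formula is uniformly bounded, so $\gamma_{n_k}(z)\to+\infty$, contradicting (c). Assuming $\tilde A>0$, continuity of $t\mapsto\ln|t-z|$ on $[-2,2]$ for $z\in\C^+$ yields
$\gamma_{n_k}(z)\to-\ln\tilde A+\int\ln|t-z|\,d\tilde k(t)$,
which must then equal $\gamma(z)$. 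The uniqueness part of Theorem \ref{Tdata} forces $(\tilde A,d\tilde k)=(A,dk)$, and since every subsequence has a further subsequence converging to the same limit, the full sequence converges.

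Finally, for $(b)\Leftrightarrow(d)$ I would exploit the identity $F_n(\zeta)=e^{w_n(z(\zeta))}$ with $z(\zeta)=-\zeta-1/\zeta$: locally uniform convergence on $\C^+$ transfers to $D^+$ and, by reflection, to $D^-$, while $|F_n|<1$ and normality handle the removable point $\zeta=0$. Conversely, $F_n$ is nonvanishing on $D^+$ (its only zero in $D$ is $0$, by injectivity), so the branch of $\log F_n$ landing in $S$ is well defined and recovers $w_n$, preserving locally uniform convergence. Combined with Carath\'eodory's theorem and Lemma \ref{L3.41}, this closes all six equivalences.
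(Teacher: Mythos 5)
Your proposal is correct and follows essentially the same route as the paper: pointwise convergence plus normal families for (a)$\Rightarrow$(b), the compactness-plus-uniqueness subsequence argument for (c)$\Rightarrow$(a) (with the same observation ruling out a zero limit for $A_n$), the change of variables and normality for (b)$\Leftrightarrow$(d), and Carath\'eodory's theorem together with Lemma \ref{L3.41} for the remaining equivalences.
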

\begin{proof}
These statements are either obvious or follow from previously discussed material,
so we can go through this quickly. Clearly, (a) yields pointwise convergence of
the $w$ functions, and a normal families argument then improves this to give the
full claim of (b). Obviously, (b) $\Longrightarrow$ (c).
If (c) is assumed and an arbitrary subsequence is chosen, then we can make
$A_n\to B\ge 0$ and $dk_n\to d\nu$ in weak-$*$ sense on a sub-subsequence (which is not made
explicit in the notation) and then pass to the limit in the Thouless formula along this sequence
to conclude that
\[
\gamma(z) = -\ln B + \int\ln |t-z|\, d\nu(t) .
\]
We now see, first of all, that $B>0$ here, and from the uniqueness of such representations
we in fact infer that $(B,d\nu)=(A,dk)$. So it turns out that $(A,dk)$ is the only possible
limit point of the sequence $(A_n,dk_n)$, and from the compactness property just used we
now obtain (a).

Next, if we recall how $F$ was constructed from $w$, it is also clear that (b) is equivalent
to the locally uniform convergence of $F_n$ to $F$ on $D^+$, which is equivalent to (d),
by a normal families argument.

We already observed earlier that the equivalence of (d) and (e) is exactly what
Carath{\'e}odory's kernel theorem \cite[Theorem 15.4.10]{Con} has to say in the
case at hand. Finally, (e) $\iff$ (f) is Lemma \ref{L3.41}.
\end{proof}
These spaces $\mathcal D$, $\mathcal W$ etc.\ from Definition \ref{D3.6}
become compact if we add a degenerate object, which we can think of as
corresponding to $\mu\notin \mathcal M_0$.
We will discuss this in more detail in a moment. We first
present the analog of Theorem \ref{T3.1} for approach to this added object.
\begin{Theorem}
\label{T3.2}
Let $(A_n,dk_n)\in\mathcal D$, and introduce the corresponding objects,
as in Theorem \ref{Tdata}. Then the following are equivalent:\\
(a) $A_n\to 0$;\\
(b) $|w_n(z)|\to\infty$ locally uniformly on $\C^+$;\\
(c) $\gamma_n(z)\to\infty$ locally uniformly on $\C^+$;\\
(d) $F_n\to 0$ locally uniformly on $D$;\\
(e) $\Omega_n\to \{ 0\}$;\\
(f) $\sup h_n \to 1$.
\end{Theorem}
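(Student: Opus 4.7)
The plan is to mirror the structure of the proof of Theorem \ref{T3.1}: I will establish (a) $\Leftrightarrow$ (c) $\Leftrightarrow$ (b) $\Leftrightarrow$ (d) by direct calculation, and then tie in (e) and (f) via a compactness argument that invokes Theorem \ref{T3.1} itself.

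I would start with (a) $\Leftrightarrow$ (c) using the Thouless formula
\[
\gamma_n(z) = -\ln A_n + \int_{[-2,2]} \ln|t-z|\, dk_n(t).
\]
On any compact $K \subset \C^+$, the integrand is uniformly bounded in $n$ (since $|t-z|$ is bounded above and below), so the integral stays bounded and local uniform divergence of $\gamma_n$ is equivalent to $-\ln A_n \to \infty$. Next, (c) $\Leftrightarrow$ (b) is immediate from $w_n = -\gamma_n + i\pi k_{0,n}$ with $0 < k_{0,n} < 1$, which makes $|w_n|$ and $\gamma_n$ differ by at most a bounded amount once $\gamma_n$ is large. For (b) $\Leftrightarrow$ (d), I use $F_n(\zeta) = e^{w_n(z(\zeta))}$ on $D^+$: locally uniform $\gamma_n \to \infty$ on $\C^+$ translates to locally uniform $|F_n| = e^{-\gamma_n} \to 0$ on $D^+$. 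The family $\{F_n\}$ is normal on $D$ (bounded by $1$), and any subsequential limit vanishes on $D^+$ and hence on all of $D$ by the identity theorem; so $F_n \to 0$ locally uniformly on $D$. Conversely, (d) immediately yields $A_n = F_n'(0) \to 0$, closing the loop.

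The implication (a) $\Rightarrow$ (f) is direct: by (c), $\gamma_n(-2-\epsilon) \to \infty$ for any fixed $\epsilon > 0$ (extending $\gamma_n$ harmonically to $\C \setminus [-2,2]$); since $k_n(-2-\epsilon) = 0$, Theorem \ref{T2.1} gives $L_n(0) \ge \gamma_n(-2-\epsilon) \to \infty$, so $h_n(e^{i0}) \to 1$ and a fortiori $\sup h_n \to 1$. For the converse, I would argue by contradiction: if some subsequence has $A_n \ge \epsilon_0 > 0$, then by Lemma \ref{L2.5} $A_n \in [\epsilon_0, 1]$, and a further subsequence satisfies $A_n \to A > 0$ and $dk_n \to dk$ weak-$*$, with $(A,dk) \in \mathcal D$. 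Theorem \ref{T3.1} applied to this subsequence (specifically, condition (f) there with $\varphi \equiv 1$) gives $\sup h_n \to \sup h$; but $h : S^1 \to [0,1)$ is upper semicontinuous on the compact $S^1$ and so attains its supremum, forcing $\sup h < 1$ and contradicting $\sup h_n \to 1$.

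Finally, (e) $\Leftrightarrow$ (f) rests on the observation that, for a domain $\Omega_h$ of the type described in Proposition \ref{P2.3}(b), the inradius at the origin equals $1 - \sup h$. Interpreting $\Omega_n \to \{0\}$ as the degenerate analogue of kernel convergence --- no fixed disk $D_r(0)$ with $r > 0$ lies eventually in all $\Omega_n$ --- makes (e) and (f) literally the same statement. The step I expect to be the main obstacle is the contradiction argument for (f) $\Rightarrow$ (a): it is the only place that draws on substantive external input (Theorem \ref{T3.1}) together with the subtle but essential point that an upper semicontinuous function into $[0,1)$ on a compact set is automatically bounded away from $1$.
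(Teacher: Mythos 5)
Your chain (a) $\iff$ (b) $\iff$ (c) $\iff$ (d), the identification of (e) with (f) via the inradius $1-\sup h_n$, and your proof of (f) $\Rightarrow$ (a) are all correct. The last of these is a genuinely different route from the paper: you pass to a subsequence with $A_n\to A>0$, $dk_n\to dk$, invoke Lemma \ref{P3.42} and then Theorem \ref{T3.1}(f) with $\varphi\equiv 1$, and finish with the observation that an upper semicontinuous $h:S^1\to[0,1)$ attains its supremum. The paper instead integrates the Thouless formula against the equilibrium measure $\omega_{[-2,2]}$ to get $-\ln A_n\le C$ directly from $\sup h_n\le c<1$; your version costs a compactness argument but avoids potential theory.

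The genuine gap is in (a) $\Rightarrow$ (f). The supremum defining $L_n(0)$ in Theorem \ref{T2.1} runs only over $t\in[-2,2]$ with $k_n(t)=0$, so the point $-2-\epsilon$ does not enter, and the inequality $L_n(0)\ge\gamma_n(-2-\epsilon)$ is false. (If your reading were admissible, then since $\gamma_n(t)\to\infty$ as $t\to-\infty$ one would get $h_n(e^{i0})=1$ for \emph{every} $n$ and every domain, contradicting $h:S^1\to[0,1)$.) Moreover the conclusion you draw, $h_n(e^{i0})\to 1$, is itself false in general: take $dk_n=d\omega_{E_n}$ with $E_n=[-2,-2+\delta_n]$ and $A_n=\mathrm{cap}\,E_n=\delta_n/4\to 0$; then $\gamma_n$ vanishes on $E_n$, so $L_n(0)=\gamma_n(-2)=0$ and $h_n(e^{i0})=0$ for all $n$ — the slit that grows sits at $\alpha=\pi$, not at $\alpha=0$. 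Only $\sup_\alpha h_n(e^{i\alpha})\to 1$ is true, and one cannot prescribe where the supremum is approached. The repair is short and is essentially the paper's argument run forwards: integrating the Thouless formula against $d\omega_{[-2,2]}$ and using $\int\ln|t-x|\,d\omega_{[-2,2]}(x)=0$ for $t\in[-2,2]$ gives $\int_{[-2,2]}\gamma_n\,d\omega_{[-2,2]}=-\ln A_n\to\infty$, hence $\sup_{-2\le x\le 2}\gamma_n(x)\to\infty$, and then Theorem \ref{T2.1} yields $\sup h_n=1-\exp\bigl(-\sup_{-2\le x\le2}\gamma_n(x)\bigr)\to 1$. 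Without this (or an equivalent) step your six conditions are not all tied together, since nothing else in your write-up carries information from (a)--(d) over to (e)--(f).
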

The condition of part (e) must be interpreted as follows: For every $r>0$, there exists $N$ so that
$D_r(0)$ is not contained in $\Omega_n$ for $n\ge N$.
For example, $\Omega_n = D\setminus [1/n,1)$ converges to $\{ 0\}$ in this sense.

In terms of Carath{\'e}odory's concept of kernel convergence, condition (e) states that
no subsequence $\{ \Omega_{n_j} \}$ has a kernel with respect to $z_0=0$;
see again \cite[Definition 15.4.1]{Con} for background information.
\begin{proof}
This is similar to the previous proof.
It's again easy to see that (a) $\iff$ (b) $\iff$ (c) $\iff$ (d): Indeed,
since $0<k<1$ on $\C^+$, (b) and (c) are obviously equivalent. It is also clear
that (d) implies (c), and conversely, if (c) holds, then at least $F_n\to 0$ locally uniformly
on $D^+$, but that is enough to conclude (d) by a normal families argument again.
Since $A_n=F'_n(0)$, (d) implies (a), and (a) clearly implies (b) and (c).

Obviously, (e) and (f) are equivalent, and thus we can finish the proof by
relating (f) to one of the first four conditions. If (f) is assumed, then Theorem \ref{T2.1}
shows that also $\sup_{-2\le x\le 2}\gamma_n(x)\to\infty$. Since $\gamma_n(x+iy)>\gamma_n(x)$
for $y>0$, this implies that no subsequence of $\gamma_n$ can converge locally uniformly
to a finite harmonic limit function on $\C^+$. A normal families argument now gives (b).

Conversely, if (f) does not hold, say $\sup h_n\le c<1$
on a subsequence, then Theorem \ref{T2.1} shows that there is a corresponding
uniform bound, $\gamma_n(x)\le C$, on $x\in[-2,2]$, along the same
subsequence. So
\[
-\ln A_n + \int\ln |t-x|\, dk_n(t) \le C .
\]
Integrate both sides with respect to $d\omega_0$, the equilibrium measure
of $[-2,2]$ (this will finish the job in a clean way, but is not really necessary;
we could also just integrate with respect to Lebesgue measure on [-2,2]).
Since $\textrm{cap }[-2,2]=1$, we know that $\int\ln |t-x|\, d\omega_0(x)=0$ for
quasi every (in fact: every) $t\in [-2,2]$. Thus Fubini's Theorem yields
the inequality $-\ln A_n\le C$ on the subsequence that was chosen above.
This clearly prevents $A_n$ from converging to zero. We have shown
that (a) does not hold.
\end{proof}

We would like to emphasize one point here that was already made implicitly in our
proof of Theorem \ref{T3.1}.
Consider again a sequence $(A_n,dk_n)\in\mathcal D$, which converges in the sense that
$A_n\to B\ge 0$ and $dk_n\to d\nu$ in weak-$*$ sense. There seem to be three
possibilities: (i) $(B,d\nu)\in\mathcal D$ also, that is, $B>0$ and \eqref{cond1} holds;
(ii) $B>0$, but \eqref{cond1} fails; (iii) $B=0$.

It is very easy to see that (ii) does not occur. This will be used
several times later on, so we state it separately, for easier reference.
\begin{Lemma}
\label{P3.42}
Let $(A_n,dk_n)\in\mathcal D$ and suppose that $A_n\to B\ge 0$ and
$dk_n\to d\nu$ in weak-$*$ sense. Then either $(B,d\nu)\in\mathcal D$ or $B=0$.
\end{Lemma}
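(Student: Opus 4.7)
The plan is to prove the contrapositive: assume $B > 0$ and verify that $(B,d\nu) \in \mathcal{D}$. Since each $dk_n$ is a probability measure supported in $[-2,2]$, the weak-$*$ limit $d\nu$ is automatically a probability measure on $[-2,2]$, so the only genuine task is to establish the positivity inequality \eqref{cond1} for $(B,d\nu)$. Writing $\gamma_n(z) = -\ln A_n + \int \ln|t-z|\, dk_n(t)$ and defining $\gamma(z)$ analogously from $(B,d\nu)$, I have $\gamma_n(z) \ge 0$ on all of $\C$ by hypothesis, and I need to transfer this inequality to $\gamma$ after taking the limit.

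For $z \in \C \setminus \R$, the function $t \mapsto \ln|t-z|$ is continuous and bounded on the compact interval $[-2,2]$, so weak-$*$ convergence yields $\int \ln|t-z|\, dk_n(t) \to \int \ln|t-z|\, d\nu(t)$. Combined with $\ln A_n \to \ln B$ (where the assumption $B > 0$ is essential to keep $\ln B$ finite), this gives pointwise convergence $\gamma_n(z) \to \gamma(z)$, and the inequality $\gamma(z) \ge 0$ passes to the limit immediately.

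The main point, and the one place where some care is needed, is real $z$, where the integrand may have a singularity at an atom of $\nu$ sitting inside $[-2,2]$. The key observation is that for fixed real $z$ and $t$, the map $y \mapsto \ln|t-z-iy| = \tfrac12 \ln((t-z)^2+y^2)$ is monotonically increasing in $y \ge 0$, so $\ln|t-z-iy| \downarrow \ln|t-z|$ as $y \to 0^+$. These functions are uniformly bounded above for $t \in [-2,2]$ and $y \in (0,1]$, so monotone convergence yields $\int \ln|t-z-iy|\, d\nu(t) \downarrow \int \ln|t-z|\, d\nu(t)$, that is, $\gamma(z+iy) \downarrow \gamma(z)$. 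The previous paragraph gives $\gamma(z+iy) \ge 0$ for every $y > 0$, so the monotone limit satisfies $\gamma(z) \ge 0$ and in particular is finite. This confirms \eqref{cond1} for real $z$ as well, completing the verification that $(B,d\nu) \in \mathcal D$. The hardest part of the argument is thus just the $z \in \R$ case, handled by the monotonicity in the imaginary part; a pleasant side effect is that the finiteness of $\gamma(z)$ automatically excludes any atoms of $\nu$ heavy enough to make the potential diverge.
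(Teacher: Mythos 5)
Your proposal is correct and follows essentially the same route as the paper: pass to the limit in the Thouless formula off the real axis, where weak-$*$ convergence applies directly and $B>0$ keeps $\ln A_n$ convergent, and conclude $\gamma\ge 0$ there. The paper leaves the extension of the inequality to $z\in[-2,2]$ implicit; your monotone-convergence argument in the imaginary part supplies exactly the missing (routine) step, in the same spirit as the analogous computation in the proof of Proposition \ref{P2.1}.
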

\begin{proof}
Suppose that $B>0$. Let $\gamma_n(z)\in\mathcal L$ be the Lyapunov exponents associated
with $(A_n,dk_n)$. Then, by passing to the limit in the Thouless formula,
\[
\gamma_n(z) \to \Gamma(z) \equiv -\ln B + \int\ln |t-z| \, d\nu(t)
\]
for $z\in\C^+$, and since $\gamma_n(z)>0$, we also have that $\Gamma(z)\ge 0$.
This is what \eqref{cond1} is asking for, so $(B,d\nu)\in\mathcal D$, as claimed.
\end{proof}
Finally, let us return to the topic that was already briefly mentioned above:
We can build compact metric spaces starting from the sets
$\mathcal D$, $\mathcal W$ etc.\ from Definition \ref{D3.6}. We first
introduce a metric in such a way that convergence with respect
to this metric is equivalent to the conditions discussed in Theorem \ref{T3.1}.
These spaces are not yet compact, but we can pass to the one-point compactifications
by adding a point at infinity (as the phrase goes); this extended space also admits a
compatible metric, and approach to the point at infinity is then equivalent to the
conditions from Theorem \ref{T3.2}.

There is, of course, general theory underlying this procedure; see, for example,
\cite{Mun}. However, we can also be explicit here and do things entirely by hand. Let us discuss
the space $\mathcal D_0 = \mathcal D \cup \{ 0\}$ in this style (we call the added point $0$ because
it is approached precisely if $A_n\to 0$). We first need a metric on the finite positive
Borel measures on $[-2,2]$ that generates the weak-$*$ topology. Fix such a metric
and call it $D$. Then let
\[
d((A,dk),(A',dk')) =  |A-A'| + D(A\, dk, A'\, dk')
\]
for two points from $\mathcal D$ and $d((A,dk),0)= A+D(A\, dk, 0)$ for the distance to added point $0$;
here, the second argument in $D(A\, dk, 0)$ denotes the zero measure.

This defines a metric $d$ on $\mathcal D_0$ with the desired properties. It follows from
Lemmas \ref{L2.5} and \ref{P3.42} and the compactness of the space of probability Borel
measures $\nu$ on $[-2,2]$ that $\left( \mathcal D_0 , d\right)$ is compact.
Convergence with respect to $d$ is equivalent to the conditions from Theorems \ref{T3.1}(a) and
\ref{T3.2}(a).

We can give similar metrics on the (one-point compactifications of the) other spaces
from Definition \ref{D3.6}. Alternatively, we can just use Theorem \ref{Tdata} and
Theorems \ref{T3.1}, \ref{T3.2} to move things over from $\mathcal D_0$ to those spaces.
We summarize:
\begin{Proposition}
\label{P3.43}
There are metrics on the spaces $\mathcal D_0=\mathcal D \cup \{ 0\}$,
$\mathcal W_0 =\mathcal W \cup \{ \infty \}$ etc.\ so that
convergence with respect to the metric is equivalent to the corresponding statements from
Theorems \ref{T3.1} and \ref{T3.2}, respectively. These spaces are compact.
\end{Proposition}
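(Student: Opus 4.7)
The plan is to verify the claim first for $\mathcal{D}_0$ equipped with the explicit metric $d$ constructed in the paragraph preceding the Proposition, and then to transfer everything to the other spaces via the bijections of Theorem \ref{Tdata}.

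For $\mathcal{D}_0$, two things need checking: first, that convergence in $d$ agrees with the statements in Theorem \ref{T3.1}(a) and Theorem \ref{T3.2}(a); and second, that $(\mathcal{D}_0, d)$ is compact. The first is essentially bookkeeping. If $A_n \to A$ with $A > 0$, then $A_n\, dk_n \to A\, dk$ weak-$*$ if and only if $dk_n \to dk$ weak-$*$, while approach to the added point $0$ just amounts to $A_n \to 0$, because the total-mass bound $A_n \le 1$ from Lemma \ref{L2.5} forces $D(A_n\, dk_n, 0) \to 0$ along with $A_n$.

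Compactness is where the supporting lemmas do the real work. Given any sequence $(A_n, dk_n) \in \mathcal{D}_0$, Lemma \ref{L2.5} puts $A_n \in [0,1]$, and the space of probability measures on $[-2,2]$ is weak-$*$ compact and metrizable, so one can extract a subsequence with $A_n \to B \in [0,1]$ and $dk_n \to d\nu$ weak-$*$. Lemma \ref{P3.42} then excludes the pathological case in which $B > 0$ but \eqref{cond1} fails, leaving only the alternatives $(B, d\nu) \in \mathcal{D}$ or $B = 0$; in either case the limit lies in $\mathcal{D}_0$. This gives sequential compactness, hence compactness.

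To transfer to the remaining spaces, pull back $d$ along each of the bijections $\mathcal{D} \leftrightarrow \mathcal{W}$, $\mathcal{D} \leftrightarrow \mathcal{L}$, and so on supplied by Theorem \ref{Tdata}, extending each bijection to carry $0 \in \mathcal{D}_0$ to the corresponding added point. Compactness transfers through any set-theoretic bijection. That the pulled-back metric generates the desired notion of convergence is exactly the content of the chain of equivalences (a) $\Leftrightarrow$ (b) $\Leftrightarrow \cdots \Leftrightarrow$ (f) in Theorems \ref{T3.1} and \ref{T3.2}. The only place where a genuine obstacle could have arisen is Lemma \ref{P3.42}, which rules out escape into a ``positive $B$ with \eqref{cond1} failing'' limit; with that lemma already established, the remaining argument is routine.
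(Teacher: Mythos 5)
Your proposal is correct and follows essentially the same route as the paper: the explicit metric $d((A,dk),(A',dk'))=|A-A'|+D(A\,dk,A'\,dk')$ on $\mathcal D_0$, compactness via Lemma \ref{L2.5}, Lemma \ref{P3.42}, and weak-$*$ compactness of the probability measures on $[-2,2]$, and then transfer to the other spaces through the bijections of Theorem \ref{Tdata} together with the equivalences of Theorems \ref{T3.1} and \ref{T3.2}. Nothing further is needed.
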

\section{Existence of invariant measures}
We now come to one of the main points of the whole discussion so far. We also want
to show that given data as in Definition \ref{D3.6}, there exists a shift invariant
measure on $\mathcal J_2$ that produces these data.

For the density of states measure $dk$, this was already shown in \cite{CarKot}.
(Such a result also appears here, as Proposition \ref{Pgamma0}.)
Carmona-Kotani work with an approximation by periodic problems, which is very similar
to what we did above in the approximation procedure that was based
on \eqref{defh1}, \eqref{defh2}. In fact, these approximating data do come from
periodic problems; more generally, finite gap domains yield periodic operators
if all slits are located at angles that are rational multiples of $\pi$.
We cannot guarantee that this method also produces
the correct $A$, and this issue will have to be addressed separately. This difficulty
is directly related to the fact that while the density
of states depends continuously on $\mu$,
the quantity $A$ is, in general, only a semicontinuous function of $\mu$.

Recall that $\mathcal M_0$ was defined as the set of invariant probability measures
on $\mathcal J_2$ with $\ln A_{\mu}\equiv \int\ln a_0\, d\mu>-\infty$. If $\mu\notin\mathcal M_0$,
then we formally set $A_{\mu}=0$.
\begin{Lemma}
\label{LM0}
Suppose that $\mu_n\in\mathcal M_0$ and $\mu_n\to\mu$ in weak-$*$ sense. Then
\begin{equation}
\label{4.11}
A_{\mu} \ge \limsup_{n\to\infty} A_{\mu_n} .
\end{equation}
In particular, $\mu\in\mathcal M_0$ if $\limsup A_{\mu_n}>0$.
\end{Lemma}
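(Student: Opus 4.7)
The lemma is really a statement of upper semicontinuity of the functional $\mu \mapsto \int \ln a_0 \, d\mu$ along weak-$*$ convergent sequences, which reflects the fact that $\ln a_0$, viewed as a function on $\mathcal{J}_2$ with values in $[-\infty, \ln 2]$, is itself upper semicontinuous (indeed, it is the pointwise decreasing limit of the bounded continuous functions $J \mapsto \ln(a_0(J)+\epsilon)$ as $\epsilon \to 0+$). My plan is to exploit this through a standard $\epsilon$-regularization argument.

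First I would fix $\epsilon > 0$. The coordinate function $J \mapsto a_0(J)$ is continuous on $(\mathcal{J}_2, d)$, so $J \mapsto \ln(a_0(J)+\epsilon)$ is continuous and bounded (between $\ln \epsilon$ and $\ln(2+\epsilon)$). Weak-$*$ convergence $\mu_n \to \mu$ then gives
\[
\int \ln(a_0+\epsilon) \, d\mu_n \longrightarrow \int \ln(a_0+\epsilon) \, d\mu .
\]
Combining this with the elementary bound $\ln a_0 \le \ln(a_0+\epsilon)$, I conclude that
\[
\limsup_{n\to\infty} \ln A_{\mu_n} = \limsup_{n\to\infty} \int \ln a_0 \, d\mu_n
\le \int \ln(a_0+\epsilon) \, d\mu .
\]

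Second, I would let $\epsilon \to 0+$. The functions $\ln(a_0+\epsilon)$ decrease to $\ln a_0$ pointwise as $\epsilon \to 0+$ and are bounded above by $\ln 3$, so monotone convergence (applied, if necessary, to the non-positive sequence $\ln(a_0+\epsilon) - \ln 3$) gives
\[
\int \ln(a_0+\epsilon) \, d\mu \searrow \int \ln a_0 \, d\mu \in [-\infty,\infty) .
\]
Putting the two steps together yields $\limsup \ln A_{\mu_n} \le \int \ln a_0 \, d\mu$.

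If $\mu \in \mathcal{M}_0$, the right-hand side equals $\ln A_\mu$ and exponentiation produces \eqref{4.11}. If $\mu \notin \mathcal{M}_0$, the right-hand side is $-\infty$, which forces $\ln A_{\mu_n} \to -\infty$, that is, $A_{\mu_n} \to 0 = A_\mu$; this simultaneously gives \eqref{4.11} and the contrapositive of the ``in particular'' assertion (if $\limsup A_{\mu_n} > 0$, then $\mu$ must lie in $\mathcal{M}_0$). There is no real obstacle here—the only point requiring mild care is bookkeeping the case $\mu \notin \mathcal{M}_0$ under the convention $A_\mu = 0$, which the monotone convergence step handles uniformly.
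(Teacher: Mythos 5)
Your argument is correct and is essentially the paper's own proof: both regularize via the continuous bounded functions $J\mapsto\ln(a_0(J)+\epsilon)$, use weak-$*$ convergence, and then send $\epsilon\to 0+$ by monotone convergence (the paper merely packages the final step as a proof by contradiction). The only detail worth adding for the ``in particular'' clause is that membership in $\mathcal M_0$ also requires $\mu$ to be $S$-invariant, which holds because a weak-$*$ limit of invariant measures is invariant.
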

The inequality can be strict. For example, if $J_0$ again denotes the
free Jacobi matrix with $a\equiv 1$, $b\equiv 0$ and
\[
\mu_n=\left( 1- \frac{1}{n} \right)\, \delta_{J_0}+ \frac{1}{n}\, \delta_{e^{-n}J_0} ,
\]
then $\mu_n\in\mathcal M_0$, $\mu_n\to\mu=\delta_{J_0}$,
$\ln A_{\mu_n} = -1$ for all $n$, but $\ln A_{\mu}=0$.

As already mentioned above, we may rephrase by saying that the map
$\mu\mapsto A_{\mu}$ is an upper semicontinuous function on the (compact) set of
invariant probability measures on $\mathcal J_2$.

This Lemma is supplemented by Lemma \ref{Ldkcont}, which says that $dk_{\mu_n}\to dk_{\mu}$
in the situation under consideration.
\begin{proof}
Since a limit of invariant measures is invariant itself, the final claim is an immediate
consequence of \eqref{4.11}, so it suffices to prove this inequality. Since
$J\mapsto\ln (a_0(J)+\epsilon)$ is a continuous function on $\mathcal J_2$ for fixed
$\epsilon>0$, we have that
\begin{equation}
\label{4.12}
\int\ln (a_0(J)+\epsilon)\, d\mu_n(J)\to \int\ln (a_0(J)+\epsilon)\, d\mu(J) .
\end{equation}
Moreover, $\int\ln (a_0+\epsilon)\, d\mu\to \ln A_{\mu}\in[-\infty,\infty)$
as $\epsilon\to 0+$ by monotone convergence, so if \eqref{4.11} failed, then we could
find a subsequence and $\epsilon>0$ so that
\[
\int\ln (a_0(J)+\epsilon)\, d\mu(J)\le \int\ln a_0(J)\, d\mu_n(J)-\epsilon
\]
along the subsequence chosen.
However, the integrals on the right-hand side are clearly dominated by
$\int\ln (a_0+\epsilon)\, d\mu_n$, so this contradicts \eqref{4.12}.
\end{proof}
\begin{Proposition}
\label{Pgamma0}
Suppose that $\Gamma\in\mathcal L$. Then there exist $\mu\in\mathcal M_0$ and $d\ge 0$ so that
\[
\Gamma(z) = \gamma_{\mu}(z)+d .
\]
Moreover, if $\inf_{z\in\C^+}\Gamma(z)=0$, then necessarily $d=0$.
\end{Proposition}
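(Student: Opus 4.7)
The plan is to approximate the data associated with $\Gamma$ by data arising from periodic Jacobi matrices, take a weak-$*$ limit of the corresponding invariant measures, and then match up $\gamma_\mu$ with $\Gamma$ up to the constant $d$. The slippage producing a nontrivial $d$ comes exactly from the failure of $A$ to be weak-$*$ continuous, as recorded in Lemma \ref{LM0}.

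First, I would use Theorem \ref{Tdata} to pass from $\Gamma\in\mathcal L$ to the unique associated $(A,dk)\in\mathcal D$, with slit height function $h$. Apply the discretization \eqref{defh1}--\eqref{defh2} from the proof of Theorem \ref{Tdata} to produce height functions $h_n$ supported at angles $j\pi/n$ with $h_n\to h$ in the sense of Lemma \ref{L3.41}. That proof shows the associated finite gap data $(A_n,dk_n)\in\mathcal D$ converge to $(A,dk)$ in the sense of Theorem \ref{T3.1}; in particular $A_n\to A>0$. By Lemma \ref{L3.11}, $(A_n,dk_n)=(\textrm{cap}\,E_n, d\omega_{E_n})$ for certain finite gap sets $E_n\subset[-2,2]$.

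Since the slits of $\Omega_n$ sit at rational multiples of $\pi$, the harmonic measures of the bands of $E_n$ all have a common denominator, and hence every element of the reflectionless isospectral torus over $E_n$ is periodic. Pick such a periodic $J_n$ of period $p_n$, and let $\mu_n$ be the uniform probability measure on its shift orbit. Then $\mu_n$ is shift invariant, $dk_{\mu_n}=d\omega_{E_n}=dk_n$, and the classical period--capacity identity $\textrm{cap}(E_n)=(a_0\cdots a_{p_n-1})^{1/p_n}$ for periodic reflectionless operators gives $A_{\mu_n}=A_n$. By compactness of the shift-invariant probability measures on $\mathcal J_2$ in the weak-$*$ topology, extract a subsequence with $\mu_n\to\mu$ weak-$*$. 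Lemma \ref{Ldkcont} then yields $dk_\mu=\lim dk_{\mu_n}=dk$, while Lemma \ref{LM0} gives $A_\mu\ge\limsup A_{\mu_n}=A>0$, so $\mu\in\mathcal M_0$.

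Setting $d=\ln A_\mu-\ln A\ge 0$, the Thouless formula (Proposition \ref{P2.1}(c)) yields
\[
\gamma_\mu(z) = -\ln A_\mu + \int\ln|t-z|\,dk(t) = \Gamma(z)-d,
\]
as claimed. For the last assertion, $\gamma_\mu>0$ on $\C^+$ by Proposition \ref{P2.1}(b), so $\Gamma=\gamma_\mu+d>d$ on all of $\C^+$; if $\inf_{\C^+}\Gamma=0$, this forces $d\le 0$, hence $d=0$. The main obstacle is the middle step: one must (i) know that the finite gap data arising from \eqref{defh1}--\eqref{defh2} genuinely sits inside the reflectionless isospectral torus, and (ii) verify the period--capacity identity so that $A_{\mu_n}=A_n$ and therefore $\limsup A_{\mu_n}=A$; without this identity one would only obtain $\Gamma\ge\gamma_\mu$ with no control on the gap.
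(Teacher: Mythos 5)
Your proposal is correct, and its overall architecture (discretize $h$ via \eqref{defh1}--\eqref{defh2}, realize the finite gap approximants by invariant measures, pass to a weak-$*$ limit using Lemmas \ref{Ldkcont} and \ref{LM0}, and absorb the semicontinuity loss of $A$ into the constant $d$) is exactly that of the paper. The one step where you genuinely diverge is the realization of the finite gap data $(\textrm{cap}\,E_n, d\omega_{E_n})$ by a measure $\mu_n$ with $A_{\mu_n}=\textrm{cap}\,E_n$ and $dk_{\mu_n}=d\omega_{E_n}$. You exploit the rationality of the gap labels ($j/n$) to get genuinely periodic reflectionless operators, take the uniform measure on a periodic orbit, and invoke the classical identities $dk=d\omega_{\sigma(J)}$ and $\textrm{cap}\,\sigma(J)=(a_1\cdots a_{p})^{1/p}$ for $p$-periodic $J$; this is essentially the Carmona--Kotani route, which the paper explicitly acknowledges is available here. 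The paper instead takes an arbitrary ergodic measure supported on the reflectionless isospectral torus $\mathcal R_0(E_n)$ and identifies its data indirectly: the reflectionless property plus Ishii--Pastur/Kotani theory give $\gamma_{\mu_n}=0$ Lebesgue-a.e.\ on $E_n$, and $dk_{\mu_n}$ is supported by $E_n$, whence the potential-theoretic uniqueness argument (as in Proposition \ref{P9.2}) forces $dk_{\mu_n}=d\omega_{E_n}$ and $A_{\mu_n}=\textrm{cap}\,E_n$. The paper's version buys independence from the arithmetic of the gap labels and avoids importing the period--capacity identity, at the price of invoking Kotani theory; yours is more explicit and self-contained on the periodic side, and the two facts you flag as needing verification are indeed the only imports --- both are standard finite gap/periodic theory (see \cite[Chapter 9]{Teschl}), so there is no gap.
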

Here, $\gamma_{\mu}$ of course refers to the Lyapunov exponent that is constructed
from $\mu\in\mathcal M_0$ as in Section 2, via $(A_{\mu},dk_{\mu})$ and \eqref{defw}.
\begin{proof}
This is similar to the argument we used in the proof of Theorem \ref{Tdata} to construct
$(A,dk)$, given a conformal map $G$. First of all, Theorem \ref{Tdata} provides us
with associated data $(A,dk)$, $W(z)$, $G(\zeta)$, $\Omega$, $h$.
Define again approximating finite gap domains as in \eqref{defh1}, \eqref{defh2},
and denote the corresponding data by $A_n$, $dk_n$, $w_n$ etc. By Lemma \ref{L3.11},
there are finite gap sets $E_n\subset [-2,2]$ so that $A_n=\textrm{cap }E_n$ and
$dk_n=d\omega_{E_n}$.

This approximation procedure is exceedingly useful here
because if $E\subset [-2,2]$ is a finite gap set, then we can give a solution
to the problem we set out to solve, and a fairly explicit one at that. More precisely,
just take any ergodic measure $\mu$ that is supported by
$\mathcal R_0(E)$;
here, $\mathcal R_0(E)$ denotes the set of Jacobi matrices $J$ with
$\sigma(J)=E$ that are reflectionless on $E$; these are usually called \textit{finite gap operators,}
and they have been studied very heavily. An account of the classical theory may be found
in \cite[Chapter 9]{Teschl}, but
see also \cite{PR2,Remac} for much more on the spaces $\mathcal R_0(E)$.
Note that of course $\mathcal R_0(E)\subset \mathcal J_2$ if (and only if) $E\subset [-2,2]$.
Ergodic measures on $\mathcal R_0(E)$
exist because these spaces are compact and shift invariant.

We claim that, as desired, $A_{\mu}=\textrm{cap }E$ and $dk_{\mu}=d\omega_E$ for
such an ergodic $\mu$ on $\mathcal R_0(E)$.
To prove this, it will suffice to show that: (i) $\gamma_{\mu}=0$ almost everywhere
with respect to $\omega_E$; (ii) $dk_{\mu}$
is supported by $E$. Compare the final part of the proof of Lemma \ref{L3.11} for this step,
or, better yet, see Proposition \ref{P9.2} below.

These two properties are well known standard facts about finite gap operators,
so we will be satisfied with just giving a brief review.
First of all, the absolutely continuous part of the spectral measure
$d\rho_0(J)$ is equivalent to $\chi_E(t)\, dt$ for every $J\in\mathcal R_0(E)$, and
this is immediate from the definition of the property of being reflectionless.
See \cite[Chapter 8]{Teschl} or \cite{PR1,Remac} for background. It follows
from (the easy Ishii-Pastur part of) Kotani theory \cite{Kotac}
that $\gamma_{\mu}=0$ (Lebesgue, hence $\omega_E$) almost everywhere on $E$. Alternative arguments are
available, too; for example, \cite{PR1} has a (sketchy, admittedly) discussion of these
issues at the end of the introduction.

Moreover, as $\sigma(J)=E$ for all $J\in\mathcal R_0(E)$,
the spectral measures $\rho_0(J)$ are supported by $E$ and thus
$dk_{\mu}$, being their average, also has this property.

Returning to the main argument, we now have available invariant measures
$\mu_n\in\mathcal M_0$ that produce the (finite gap) data constructed above.
On a suitable subsequence, which we again assume to be the original sequence for notational
convenience, we can make the $\mu_n$ converge to a limiting measure $\mu$ in weak-$*$ sense.
We constructed the approximations so that $h_n\to h$ in the sense of Theorem \ref{T3.1}(f), so
we also have part (a) of the Theorem
and, in particular, $A_n\to A >0$.
Thus Lemma \ref{LM0} guarantees that $\mu\in\mathcal M_0$.
Lemma \ref{Ldkcont} then shows that (on $z\in\C^+$)
\[
\gamma_n(z)=-\ln A_n+\int\ln |t-z|\, dk_n(t) \to -\ln A + \int\ln |t-z|\, dk_{\mu}(t) .
\]
However, from Theorem \ref{T3.1}(c) we know that $\gamma_n$ also converges
to $\Gamma$ locally uniformly on $\C^+$. This gives the representation
$\Gamma=\gamma_{\mu}+d$,
with $d=\ln (A_{\mu}/A)$. If we now recall that $A=\lim A_n$,
then we can use Lemma \ref{LM0} to confirm
that $A_{\mu}\ge A$, so $d\ge 0$, as claimed. The final claim is obvious from this,
since $\gamma_{\mu}\ge 0$.
\end{proof}
This is not completely satisfactory. Of course, we would prefer to be able to
represent $\Gamma=\gamma_{\mu}$, without the shift $d$. To achieve this, we now show
that we can also represent a \textit{larger }function than $\Gamma$, and then take
a suitable convex combination.
\begin{Lemma}
\label{L5.1}
Suppose that $\Gamma\in\mathcal L$. Then there exist $D>0$ and $\mu\in\mathcal M_0$ so that
\[
\Gamma(z) = \gamma_{\mu}(z) - D .
\]
\end{Lemma}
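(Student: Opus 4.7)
The aim is to construct $\mu\in\mathcal M_0$ with $\gamma_\mu=\Gamma+D$ for some $D>0$; equivalently, to realize the admissible data $(Ae^{-D},dk)$ by an invariant measure, where $(A,dk)$ is the data attached to $\Gamma$ via Theorem \ref{Tdata}. The natural strategy is to apply Proposition \ref{Pgamma0} not to $\Gamma$ itself but to a strictly enlarged function, and then to verify that the unavoidable shift introduced by that Proposition is nevertheless strictly smaller than the amount by which $\Gamma$ was enlarged.

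For any $c>0$, the function $\Gamma_c(z):=\Gamma(z)+c$ again lies in $\mathcal L$: positivity, harmonicity, the positivity of $\partial\Gamma_c/\partial y=\partial\Gamma/\partial y$, the reflection symmetry, and the asymptotics $\Gamma_c(iy)/\ln y\to 1$ are all preserved. By Theorem \ref{Tdata}, the data attached to $\Gamma_c$ is precisely $(Ae^{-c},dk)$, which belongs to $\mathcal D$ because admissibility \eqref{cond1} for $(Ae^{-c},dk)$ is just $\Gamma(z)+c\ge 0$. Applying Proposition \ref{Pgamma0} to $\Gamma_c$ then produces $\mu=\mu_c\in\mathcal M_0$ and $d=d_c\ge 0$ with $\Gamma_c=\gamma_\mu+d$, so
\[
\gamma_\mu(z)=\Gamma(z)+(c-d),
\]
and from the proof of Proposition \ref{Pgamma0} we have $dk_\mu=dk$ together with $A_\mu=Ae^{d-c}$. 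Setting $D:=c-d$, the lemma will follow from the single arithmetic condition $d<c$, equivalently $A_\mu<A$.

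The main obstacle is precisely this strict upper bound on $A_\mu$. By construction, the approximating measures $\mu_n^{(c)}$ in the proof of Proposition \ref{Pgamma0} are ergodic reflectionless measures on $\mathcal R_0(E_n^{(c)})$ with $A_{\mu_n^{(c)}}=\mathrm{cap}\,E_n^{(c)}\to Ae^{-c}<A$; upper semicontinuity (Lemma \ref{LM0}) only guarantees $A_\mu\ge Ae^{-c}$, and a priori $A_\mu$ could jump all the way up to or past $A$. To rule this out, I would argue as follows. On one hand, Lemma \ref{L2.5} gives $A_\mu\le 1$, hence $d\le c-\ln A$, i.e., $D\ge \ln A$; on the other hand, since $dk_\mu=dk$, the pair $(A_\mu,dk)$ lies in $\mathcal D$, so admissibility for the limit data rewrites as $\Gamma(z)\ge\ln(A_\mu/A)$ for all $z$, i.e., $A_\mu\le Ae^{\inf_{\C^+}\Gamma}$. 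These two inequalities together show that $d=c$ forces a degenerate configuration (e.g. $A=1$, the $\delta_{J_0}$ case, which is trivially handled). Ruling out this degeneracy for at least one value of $c>0$ — which is where I expect the real work to be — then gives the strict inequality $d<c$.

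Once $D=c-d>0$ has been secured, we have $\gamma_\mu=\Gamma+D$, i.e.\ $\Gamma=\gamma_\mu-D$, completing the proof. The hardest step is therefore a quantitative improvement of Lemma \ref{LM0} in our specific setting: showing that the upper-semicontinuity gap $A_\mu-Ae^{-c}$ cannot fully close under the approximation scheme used in Proposition \ref{Pgamma0}, at least for some $c>0$. This is the point that will require the most care, and it is the key new input beyond what is already available from Proposition \ref{Pgamma0} and its proof.
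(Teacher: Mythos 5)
Your plan correctly isolates the crux --- one must produce a measure whose $A_\mu$ is \emph{strictly} smaller than the $A$ attached to $\Gamma$, i.e.\ beat the upper semicontinuity of $\mu\mapsto A_\mu$ --- but the argument you offer for that step does not close, and the route you choose (apply Proposition \ref{Pgamma0} to $\Gamma+c$ and hope the resulting shift $d$ satisfies $d<c$) is not the one that works. Concretely: with $A_\mu=Ae^{d-c}$, your first inequality $A_\mu\le 1$ (Lemma \ref{L2.5}) gives $d\le c-\ln A$, and since $\ln A\le 0$ this is \emph{weaker} than $d\le c$, not stronger; your second inequality, admissibility of $(A_\mu,dk)$, gives $d\le c+\inf_{\C^+}\Gamma$, again weaker than $d\le c$ because $\Gamma>0$. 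So the case $d=c$ is perfectly consistent with both bounds and is not forced into any ``degenerate configuration''; nothing you have written rules it out, and you acknowledge as much. There is no obvious way to control the semicontinuity jump for the single measure produced by Proposition \ref{Pgamma0} applied to $\Gamma+c$.

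The paper's proof sidesteps this entirely by a decomposition rather than a perturbation. Partition $[-2,2]$ into $2N$ intervals $I_j$ of length $2/N$, set $c_j=\int_{I_j}dk$, $dk_j=c_j^{-1}\chi_{I_j}\,dk$, and \emph{define} $\ln A_j=\inf_{x\in\R}\int\ln|t-x|\,dk_j(t)$. Then each $\Gamma_j\in\mathcal L$ satisfies $\inf\Gamma_j=0$ by construction, so Proposition \ref{Pgamma0} applies to each piece with \emph{zero} shift, yielding $\mu_j\in\mathcal M_0$ with $\Gamma_j=\gamma_{\mu_j}$ exactly; no semicontinuity issue arises. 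Taking $\mu=\sum c_j\mu_j$ and using linearity of $\gamma$ in the measure gives $\gamma_\mu=\Gamma+D$ with $D=\ln A-\sum c_j\ln A_j$. The strict positivity of $D$ now comes for free from potential theory on small sets: choosing $x$ at the center of $I_j$ shows $\ln A_j<-\ln N$, hence $D>\ln A+\ln N>0$ once $N$ is large. This quantitative gain from localizing the density of states --- small intervals have small capacity --- is the missing idea; I would encourage you to rework your argument around a decomposition of $dk$ rather than a constant shift of $\Gamma$.
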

As an immediate consequence of this, we obtain the desired result.
\begin{Theorem}
\label{T5.1}
Suppose that an object as in one of the parts of Definition \ref{D3.6} is given.
Then there exists a $\mu\in\mathcal M_0$ that generates this object.
\end{Theorem}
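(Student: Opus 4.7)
The plan is to combine Proposition \ref{Pgamma0} and Lemma \ref{L5.1} through a convex combination argument. First, by Theorem \ref{Tdata}, any object of the types listed in Definition \ref{D3.6} corresponds bijectively to a pair $(A,dk)\in\mathcal{D}$, and therefore also to a unique Lyapunov exponent $\Gamma\in\mathcal{L}$. Moreover, if I can produce $\mu\in\mathcal{M}_0$ with $\gamma_\mu=\Gamma$, then $(A_\mu,dk_\mu)$ is forced to equal $(A,dk)$ by the uniqueness part of Theorem \ref{Tdata}, so $\mu$ will generate all the associated data simultaneously. Thus the whole theorem reduces to the following task: given $\Gamma\in\mathcal{L}$, realize it exactly (no additive constant) as $\gamma_\mu$ for some $\mu\in\mathcal{M}_0$.

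Given such a $\Gamma$, Proposition \ref{Pgamma0} supplies $\mu_1\in\mathcal{M}_0$ and $d\ge 0$ with $\Gamma=\gamma_{\mu_1}+d$, while Lemma \ref{L5.1} supplies $\mu_2\in\mathcal{M}_0$ and $D>0$ with $\Gamma=\gamma_{\mu_2}-D$. Set
\[
t=\frac{D}{D+d}\in(0,1],\qquad \mu=t\mu_1+(1-t)\mu_2.
\]
Then $\mu$ is $S$-invariant and
\[
\int\ln a_0\,d\mu=t\ln A_{\mu_1}+(1-t)\ln A_{\mu_2}>-\infty,
\]
so $\mu\in\mathcal{M}_0$. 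Because the density of states measure is the $\mu$-average of the $\rho_0(J)$, we have $dk_\mu=t\,dk_{\mu_1}+(1-t)\,dk_{\mu_2}$, and similarly $\ln A_\mu=t\ln A_{\mu_1}+(1-t)\ln A_{\mu_2}$. The Thouless formula (Proposition \ref{P2.1}(c)) is linear in these two quantities, so
\[
\gamma_\mu=t\gamma_{\mu_1}+(1-t)\gamma_{\mu_2}=t(\Gamma-d)+(1-t)(\Gamma+D)=\Gamma,
\]
by the choice of $t$.

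I do not expect any serious obstacle in this step: the main work has already been done in Proposition \ref{Pgamma0} and Lemma \ref{L5.1}, which bracket $\Gamma$ from below and from above by realizable Lyapunov exponents. The only verification that needs any care is the $\mu$-linearity of $\gamma_\mu$, but this is immediate once one observes that both $dk_\mu$ and $\int\ln a_0\,d\mu$ are manifestly linear in $\mu$. Combining this with Theorem \ref{Tdata} gives the statement for all six kinds of data simultaneously.
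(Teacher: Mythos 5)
Your argument is correct and is essentially identical to the paper's own proof: reduce to the case of a given $\Gamma\in\mathcal L$ via Theorem \ref{Tdata}, then interpolate between the measure from Proposition \ref{Pgamma0} (realizing $\Gamma-d$) and the one from Lemma \ref{L5.1} (realizing $\Gamma+D$) by the convex combination with weights $D/(D+d)$ and $d/(D+d)$, using the linearity of $\ln A_\mu$ and $dk_\mu$ in $\mu$. The paper writes the same combination as $\mu=(d_2\mu_1+d_1\mu_2)/(d_1+d_2)$ and treats $d=0$ as a separate trivial case, but the content is the same.
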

In other words, if $\Gamma\in\mathcal L$ is given, there exists $\mu\in\mathcal M_0$
so that $\Gamma=\gamma_{\mu}$, or if $(A,dk)\in\mathcal D$ were given, then we can find
$\mu\in\mathcal M_0$ so that
$A=A_{\mu}$ and $dk=dk_{\mu}$ and so forth.

Assuming the Lemma, we can indeed easily establish Theorem \ref{T5.1}, as follows.
First of all, by Theorem
\ref{Tdata}, it suffices to discuss the case where a $\Gamma\in\mathcal L$ is given.
Proposition \ref{Pgamma0} now yields a $\mu_1\in\mathcal M_0$ so that
$\Gamma=\gamma_{\mu_1}+d_1$. If $d_1=0$ here, then we are done. If $d_1>0$, use
Lemma \ref{L5.1} to find $\mu_2\in\mathcal M_0$ and $d_2>0$ so that
$\Gamma=\gamma_{\mu_2}-d_2$. Then
\[
\mu = \frac{d_2\mu_1 + d_1\mu_2}{d_1+d_2}
\]
also lies in $\mathcal M_0$ and satisfies $\gamma_{\mu}=\Gamma$, as desired. So it only remains
to prove Lemma \ref{L5.1}.
\begin{proof}[Proof of Lemma \ref{L5.1}]
By Theorem \ref{Tdata}, we can write
\[
\Gamma(z) = -\ln A + \int_{[-2,2]} \ln |t-z|\, dk(t) ,
\]
for some $(A,dk)\in\mathcal D$. Partition $[-2,2]$ into
$2N$ intervals $I_j$ of length $2/N$
each, ignore those $I_j$ with $c_j:=\int_{I_j}dk(t)=0$, and let
\[
dk_j(t) =\frac{1}{c_j}\, \chi_{I_j}(t)\, dk(t)
\]
for the remaining intervals.
Then we can recover $dk$ as the convex combination $dk=\sum c_j\, dk_j$, and the $dk_j$
are themselves admissible density of states measures because the integrals
$\int \ln |t-z|\, dk_j(t)$ are still bounded below.

So we can define $A_j>0$ by writing
\begin{equation}
\label{5.61}
\ln A_j = \inf_{x\in\R} \int \ln |t-x|\, dk_j(t) ;
\end{equation}
then $(A_j,dk_j)\in\mathcal D$, or, equivalently,
$\Gamma_j\in\mathcal L$, where
\[
\Gamma_j(z) = -\ln A_j + \int \ln |t-z|\, dk_j(t) .
\]
By construction, these new functions all
satisfy $\inf\Gamma_j = 0$. Therefore, Proposition \ref{Pgamma0} provides us
with measures $\mu_j\in\mathcal M_0$ so that $\Gamma_j=\gamma_{\mu_j}$.
Let $\mu = \sum c_j\mu_j$, and also observe that $\ln A_j < -\ln N$; indeed, it suffices
to take $x$ as the center of $I_j$ in \eqref{5.61} to confirm this. We have that
\[
\gamma_{\mu}=\sum c_j\Gamma_j = \Gamma + \ln A - \sum c_j\ln A_j \equiv \Gamma + D ,
\]
and here we can be sure that
\[
D= \ln A - \sum c_j\ln A_j > \ln A + \ln N \cdot \sum c_j = \ln A + \ln N
\]
will be indeed positive, provided we took $N\in\N$ large enough.
\end{proof}
\section{Slits and gaps}
Recall the definitions made in the context of Theorem \ref{T2.1}:
Let $E=\textrm{top supp }dk$ be the topological (= smallest closed) support of $dk$.
$E$ is a compact subset of $[-2,2]$ (with no isolated points),
and thus its complement $(-2,2)\setminus E$
is a disjoint union of open intervals $I_j$, which we call \textit{gaps. }On each gap $t\in I_j$,
the function $k(t)=\int_{[-2,t]}dk(s)$ has a constant value $k_j\in [0,1]$,
which is unique to this gap. We call
$k_j$ the \textit{gap label }of $I_j$.

It is worth pointing out that
$k_0=0$ is a gap label in this sense if and only if $\min E>-2$; the corresponding
gap is the missing piece $(-2,\min E)$. A similar comment applies to $k_0=1$ as a gap label.

We mention in passing that there is an interesting and beautiful theory
(the \textit{Gap Labeling Theorem}) that describes the set of possible gap labels
in terms of the dynamics of the shift map $S$ on $\textrm{top supp }\mu$.
See, for example, \cite{Johns,Schw} for the classical results and
\cite{ABD} for a recent development.

We saw earlier that if $E$ is a finite gap set, then the gap labels correspond exactly
to the slits of $\Omega$. More precisely, $\Omega$ is the unit disk with finitely many
radial slits removed, and these slits are located at the angles $e^{\pm i\pi k_j}$,
with $k_j$ being the gap labels. See Lemma \ref{L3.11} and its proof for these statements.

This correspondence between slits and gaps is valid in general, if we define
the notion of a slit for a general region $\Omega\in\mathcal R$ appropriately.
\begin{Definition}
\label{D6.1}
Let $\Omega\in\mathcal R$, and let $h\in\mathcal H$ be the associated slit height function.
We say that $\Omega$ has a \textit{slit }at angle $e^{i\alpha}$ if
\[
h\left( e^{i\alpha}\right) > \limsup_{t\to 0+} h\left( e^{i(\alpha+\sigma t)}\right)
\]
for at least one of $\sigma=1$ or $\sigma=-1$.
\end{Definition}
So a \textit{slit, }in this technical sense, corresponds to an at least one-sided
jump in the slit height function.
\begin{Theorem}
\label{T6.1}
Let $\Omega\in\mathcal R$ and $0\le\alpha\le\pi$.
Then $\Omega$ has a slit at angle $e^{i\alpha}$ if and only if $k=\alpha/\pi$ is a gap
label of $E=\textrm{\rm top supp }dk$.
\end{Theorem}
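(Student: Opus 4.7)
The plan is to reformulate ``slit at $e^{i\alpha}$'' (Definition~\ref{D6.1}) in terms of the function $L(\alpha)$ from Theorem~\ref{T2.1}: since $h=1-e^{-L}$ is strictly monotone in $L$, a slit at $e^{i\alpha}$ is equivalent to a one-sided jump $L(\alpha)>\limsup_{\alpha'\to\alpha^{\sigma}}L(\alpha')$ for some $\sigma=\pm 1$. Note that $L$ is upper semicontinuous, inheriting this from $\gamma$ (as used in the proof of Theorem~\ref{T2.1}). The task is thus to show that $L$ jumps at $\alpha$ precisely when $\alpha/\pi$ is a gap label of $E$.

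For the forward direction, let $(a,b)\subset(-2,2)\setminus E$ be the gap with $k\equiv\alpha/\pi$ on $[a,b]$, so $L(\alpha)=\max_{[a,b]}\gamma$. Since $dk$ has no mass on $(a,b)$, differentiating the Thouless formula twice there gives
\[
\gamma''(t)=-\int\frac{dk(s)}{(s-t)^{2}}<0,
\]
so $\gamma$ is strictly concave on $[a,b]$ (continuous at the endpoints by monotone convergence). The unique maximizer $t^{*}\in[a,b]$ satisfies $L(\alpha)=\gamma(t^{*})>\min\{\gamma(a),\gamma(b)\}$; without loss of generality $L(\alpha)>\gamma(b)$. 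For any $\alpha_{n}\downarrow\alpha$, continuity of $k$ and $b\in E$ (so $k$ strictly increases past $b$) force every point of $k^{-1}(\alpha_{n}/\pi)$ to lie just above $b$; upper semicontinuity of $\gamma$ then yields $\limsup_{n}L(\alpha_{n})\le\gamma(b)<L(\alpha)$, the desired jump.

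For the reverse direction, suppose $L(\alpha')<L(\alpha)-\epsilon$ for all $\alpha'\in(\alpha,\alpha+\delta)$ (a left-sided jump is analogous). Working in the $F$-picture, set $r^{*}=e^{-L(\alpha)+\epsilon/2}$ and $p^{*}=r^{*}e^{i\alpha}$, a point strictly between the tip $e^{-L(\alpha)}e^{i\alpha}$ and the base $e^{i\alpha}$ of the slit at angle $\alpha$. The sequence $p_{n}=r^{*}e^{i\alpha_{n}}$ with $\alpha_{n}\downarrow\alpha$, $\alpha_{n}<\alpha+\delta$, satisfies $r^{*}<e^{-L(\alpha_{n})}=1-h(e^{i\alpha_{n}})$, hence $p_{n}\in\Omega$, and $p_{n}\to p^{*}$ from the upper side of the slit, making $p^{*}$ the single-point impression of a prime end $P^{*}$ of $\Omega$. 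Via the Carath{\'e}odory extension of $F$ to a homeomorphism $\overline{D}\to\widehat{\Omega}$, there is a unique $\zeta^{*}\in\partial D$ with $F(\zeta^{*})=P^{*}$; set $t^{*}=z(\zeta^{*})=-\zeta^{*}-1/\zeta^{*}\in[-2,2]$. The impression $p^{*}$ must agree with the formula value $e^{w(t^{*})}=e^{-\gamma(t^{*})+i\pi k(t^{*})}$, since the latter is the vertical boundary limit $\lim_{y\downarrow 0}e^{w(t^{*}+iy)}$ (secured by monotone convergence in the Thouless formula, as in the proof of Proposition~\ref{P2.1}), and every such limit lies in the impression. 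Comparing with $p^{*}=e^{-L(\alpha)+\epsilon/2+i\alpha}$ gives $\gamma(t^{*})=L(\alpha)-\epsilon/2$ and $k(t^{*})=\alpha/\pi$. Were $\alpha/\pi$ not a gap label, $k^{-1}(\alpha/\pi)\cap[-2,2]=\{t_{0}\}$ would force $t^{*}=t_{0}$ and then $\gamma(t_{0})=L(\alpha)-\epsilon/2<L(\alpha)=\gamma(t_{0})$, an immediate contradiction.

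The main technical point is the prime-end identification in the reverse direction: because $\gamma$ is only upper semicontinuous on $\R$, one cannot appeal to continuity of $F$ as a map into $\C$ along arbitrary approach, but the prime-end compactification separates the two sides of a slit, and the monotone convergence in the Thouless formula picks out the correct vertical limit inside the single-point impression of $P^{*}$. The edge cases $\alpha=0,\pi$ (where the slit sits on the real axis of $D$) go through identically, with the symmetry $F(\overline{\zeta})=\overline{F(\zeta)}$ ensuring that the ``upper side'' prime end at $p^{*}$ is still distinct from the slit tip at $\zeta=\pm 1$; the remaining checks (the second-derivative computation, the membership $p_{n}\in\Omega$, and $t^{*}\in[-2,2]$ forced by $k(t^{*})=\alpha/\pi\in[0,1]$) are routine.
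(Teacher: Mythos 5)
Your forward direction is essentially the paper's: strict concavity of $\gamma$ on the closure of the gap (from $\gamma''<0$ via the Thouless formula) gives $L(\alpha)>\min\{\gamma(a),\gamma(b)\}$, and upper semicontinuity of $\gamma$ near the relevant endpoint, combined with the shrinking of $k^{-1}(\alpha_n/\pi)$ to that endpoint, produces the one-sided jump of $h$ through Theorem \ref{T2.1}. (Your ``without loss of generality $L(\alpha)>\gamma(b)$'' silently covers the edge gaps $(-2,\min E)$ and $(\max E,2)$, where only one direction of approach exists; there the monotonicity of $\gamma$ outside $[\min E,\max E]$ forces the strict inequality at the correct endpoint, as the paper notes, so this is fine.) The reverse direction is where you genuinely diverge. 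The paper fixes \emph{two} distinct points on the exposed segment of the slit, obtains two distinct prime ends and hence two distinct preimages $\zeta\neq\zeta'$ on the same closed semicircle, and then uses only the continuity of $k$ on $\C^+\cup\R$ to produce $t\neq t'$ with $k(t)=k(t')=\alpha/\pi$. You instead work with a \emph{single} point $p^*$ strictly between tip and base, identify the vertical boundary limit $e^{-\gamma(t^*)+i\pi k(t^*)}$ with $p^*$, and read off both the argument ($k(t^*)=\alpha/\pi$) and the modulus ($\gamma(t^*)=L(\alpha)-\epsilon/2<L(\alpha)$), contradicting $L(\alpha)=\gamma(t_0)$ when $k^{-1}(\alpha/\pi)$ is a singleton. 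This is a valid and rather elegant shortcut; what it buys is a contradiction from one point instead of two, at the price of needing control of $\gamma$ (not just $k$) at the boundary, which you correctly secure through the everywhere-existing vertical limit of Proposition \ref{P2.1}(b).

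The one step you assert but do not prove is the load-bearing one: that $P^*$ has \emph{single-point} impression $\{p^*\}$. Knowing only that the impression is contained in the slit would give $\gamma(t^*)\le L(\alpha)$, which is no contradiction; you need the impression to exclude the tip. The claim is true: for small $\rho>0$ the circle $|w-p^*|=\rho$ meets the ray $\arg w=\alpha$ in two points of the slit (since $e^{-L(\alpha)}<r^*-\rho<r^*+\rho<1$), while the half of $D_\rho(p^*)$ at angles in $(\alpha,\alpha+\delta)$ lies entirely in $\Omega$ because $1-h(e^{i\beta})=e^{-L(\beta)}>e^{-L(\alpha)+\epsilon}>r^*+\rho$ there; these arcs form a null-chain of crosscuts whose interiors shrink to $p^*$, so the prime end reached from $Q$ has impression $\{p^*\}$, and then $I(P^*)$ equals the cluster set of $F$ at $\zeta^*$, which captures the vertical limit. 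This is the same geometric fact the paper invokes (in the form ``distinct points of $S$ give distinct prime ends'') with a comparable amount of detail, so I would not call it a gap, but your proof should include at least this sentence, since your argument fails without the full strength of the single-point statement.
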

\begin{proof}
Suppose first that $k_0\in [0,1]$ is the label of some gap $(a,b)$, with $-2\le a < b\le 2$.
This means that $k(t)=k_0$ for $t\in [a,b]$, but also that $k(t)\not= k_0$ if
$t\in [-2,2]\setminus [a,b]$. In this situation, Theorem \ref{T2.1} says that
$h(e^{i\pi k_0})=1-e^{-\Gamma}$, $\Gamma=\sup_{a\le t\le b}\gamma(t)$.

The Thouless formula shows that $\gamma$ has a harmonic
extension to $\C\setminus E$, and
\[
\gamma''(t)=-\int_E \frac{dk(s)}{(s-t)^2}<0
\]
for $t\in (a,b)$. It also follows,
with the help of monotone convergence, that $\gamma\bigr|_{[a,b]}$ is continuous.
So, in particular, at least one of the inequalities $\Gamma>\gamma(a)$ or
$\Gamma>\gamma(b)$ holds. Let's assume that $\Gamma>\gamma (a)$ and also
that $a>-2$ (if $a=-2$, then $\Gamma>\gamma (b)$ and $b<2$, and an analogous argument
works). Then $\gamma(t)\le\Gamma-\epsilon$
for all $a-\epsilon\le t\le a$ for some small $\epsilon>0$ because
$\gamma$ is upper semicontinuous.
Now $k(a-\epsilon)<k(a)=k_0$, so Theorem \ref{T2.1} implies that
$h(e^{i\pi k})\le h(e^{i\pi k_0})-\delta$
for some $\delta>0$ and all $k<k_0$ that are sufficiently close to $k_0$. This is what
we wanted to show.

To prove the converse, we again use Carath{\'e}odory's theory of the boundary values
of conformal maps. Assume that
\begin{equation}
\label{6.1}
\limsup_{k\to k_0-}h\left( e^{i\pi k}\right) < h\left( e^{i\pi k_0}\right)\equiv h_0 ,
\end{equation}
the other case being analogous, of course. In more geometric terms,
assumption \eqref{6.1} means that $\partial\Omega_h$ contains an exposed line
segment
\begin{equation}
\label{6.3}
S = \left\{ re^{i\pi k_0} : 1-h_0 +\epsilon < r < 1-h_0+2\epsilon \right\}
\end{equation}
that can be accessed
from $\Omega_h$ through smaller angles. Or, more formally, we can choose $\epsilon>0$
so small that also $Q\subset\Omega_h$, where
\[
Q=\left\{ re^{i\alpha} : 1-h_0+\epsilon < r < 1-h_0+2\epsilon, \quad
\pi k_0-\epsilon < \alpha < \pi k_0 \right\} .
\]
As a consequence, each point on $S$ from \eqref{6.3} corresponds to
a different prime end. Let us try to say this in more precise language:
If $z_n$ is a sequence of points from $Q$ that converges
(in traditional sense) to some $z\in S$, then $z_n$, viewed as a sequence
from $\widehat{\Omega}_h$, the union of $\Omega_h$ with its prime ends, with the topology
discussed in \cite[Section 14.3]{Con}, converges to some prime end.
(This is easy to show, but for our purposes here, convergence on
a subsequence is enough, and this is automatic because $\widehat{\Omega}_h$
is compact.) Moreover, and this is actually the crucial part, if $z\not=z'$,
then the corresponding prime ends are different also. This follows immediately
from the way prime ends were defined.
Finally, recall again \cite[Theorem 14.3.4]{Con}, which says that $F$ extends
to a homeomorphism $F:\overline{D}\to\widehat{\Omega}_h$.

The upshot of all this is the following: We can find two sequences $\zeta_n, \zeta'_n\in D$
which converge to two different boundary points $\zeta, \zeta'\in\partial D$, so that
$F(\zeta_n)$, $F(\zeta'_n)$ both converge to points on
$S$ from \eqref{6.3}. We obtain these sequences by simply picking sequences $z_n,z'_n\in Q$ so that
$z_n\to z$, $z'_n\to z'$, and here $z,z'$ are two distinct points from $S$. We then
let $\zeta_n=F^{-1}(z_n)$, $\zeta'_n=F^{-1}(z'_n)$.

In fact, we can and must say slightly more here: Since the $z_n,z'_n$ can all be chosen
from the same semidisk (either $D^+$ or $D^-$), it is also true that $\zeta,\zeta'$ will
either both be on the (closed) upper semicircle, or they will both be on the lower semicircle.

If we now go back to the original variables and recall that
$k(z)$ is continuous on $\C^+\cup\R$ (see Proposition \ref{P2.1}(a)), then this says
that there are $t,t'\in [-2,2]$, $t\not= t'$, with $k(t)=k(t')=k_0$. Thus $k_0$ is a gap label.
\end{proof}
Tools from the classical theory of conformal maps can be used to analyze other questions, too.
For example, \cite[Theorem 14.5.5]{Con} says that $F:D\to\Omega$ has a continuous extension
$F_0:\overline{D}\to\overline{\Omega}$ if and only if $\partial\Omega$ is locally connected.
Note that we are now seeking an extension that takes values in $\C$, so this issue is not
directly addressed by the theory of prime ends. This result may be used to establish the
following criterion for the continuity of the Lyapunov exponent.
\begin{Theorem}
\label{T6.2}
Let $\gamma\in\mathcal L$, and let $h\in\mathcal H$ be the associated slit height function.
Then $\gamma(z)$ is continuous on $\C$ if and only if the following holds:
(i) If $\alpha/\pi$ is not a gap label,
then $h$ is continuous at $e^{i\alpha}$; (ii) if $\alpha/\pi$ is a gap label, then
$\lim_{t\to 0+} h(e^{i(\alpha+\sigma t)})$ exists for both $\sigma=1$ and $\sigma=-1$.
\end{Theorem}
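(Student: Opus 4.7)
The strategy is to pass through the associated conformal map $F:D\to\Omega$ (via Theorem \ref{Tdata}) and apply the Carath\'eodory-type theorem quoted just above the statement, namely \cite[Theorem~14.5.5]{Con}: $F$ has a continuous extension $\overline{D}\to\overline{\Omega}$ if and only if $\partial\Omega$ is locally connected. I will show that continuity of $F$ on $\overline{D}$ is equivalent to continuity of $\gamma$ on $\C$, and that local connectedness of $\partial\Omega_h$ is equivalent to conditions (i) and (ii).

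First, continuity of $\gamma$ on $\C$ reduces to continuity of $\gamma\bigr|_{[-2,2]}$: by the Thouless formula, $\gamma$ is harmonic (hence continuous) on $\C\setminus[-2,2]$, is upper semicontinuous on all of $\C$, and its boundary values on $[-2,2]$ from $\C^+$ coincide with $\gamma\bigr|_{[-2,2]}$ by Proposition \ref{P2.1}(b); standard arguments then yield the equivalence. Next, since $F=e^{-\gamma}e^{i\pi k}$ and $k$ is always continuous on $\C^+\cup\R$ by Proposition \ref{P2.1}(a), the change of variables \eqref{zzeta} (a homeomorphism of the closed upper semicircle of $\partial D$ onto $[-2,2]$) shows that $F$ extends continuously from $D^+$ to the closed upper semicircle if and only if $\gamma$ is continuous on $[-2,2]$. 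The reflection symmetries $F(\overline{\zeta})=\overline{F(\zeta)}$ and $\gamma(\overline{z})=\gamma(z)$, combined with the holomorphic continuation of $F$ across the real diameter of $D$, then upgrade this to: $F$ extends continuously to $\overline{D}$ if and only if $\gamma$ is continuous on $\C$.

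The main substantive step will be to prove that $\partial\Omega_h$ is locally connected if and only if (i) and (ii) hold. Using Theorem \ref{T6.1}, (i) and (ii) translate to the statement that the only discontinuities of $h$ are one-sided jumps at spike-slit angles in the sense of Definition \ref{D6.1}. A direct computation (using upper semicontinuity of $h$) yields the description
\[
\partial\Omega_h=\{re^{i\alpha}: 1-h(e^{i\alpha})\le r\le 1-\widehat{h}(e^{i\alpha})\},\quad\widehat{h}(e^{i\alpha})=\liminf_{\beta\to\alpha}h(e^{i\beta}).
\]
If (i) or (ii) fails, I can find $\alpha_0$ and $\sigma\in\{+1,-1\}$ for which $\lim_{t\to 0+}h(e^{i(\alpha_0+\sigma t)})$ does not exist; on the $\sigma$-side the inner curve $r=1-h(\alpha)$ then oscillates across a fixed gap and accumulates at $e^{i\alpha_0}$ in the manner of the topologist's sine curve, so that points on the sticking-up slit at $\alpha_0$ admit no locally connected neighborhoods in $\partial\Omega_h$. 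Conversely, assuming (i) and (ii), a small neighborhood of each boundary point is either a continuous arc of the inner curve (at a continuity angle of $h$) or a ``T''-shaped configuration consisting of a spike attached to its two one-sided plateaus (at a slit angle), and both are locally connected.

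The main obstacle will be this third step. The failure direction is a clean topologist's-sine-curve style obstruction, but the sufficiency direction demands a careful case analysis of points on the inner curve, on the spike segments, at the spike-plateau junctions, and on the unit circle, with a verification of local connectedness at each. The existence of one-sided limits guaranteed by (i) and (ii) is precisely what rules out topologist's-sine-curve pathologies at the spike endpoints.
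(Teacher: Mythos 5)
Your proposal follows exactly the route the paper indicates (and then explicitly leaves as a sketch): pass to $F$ and $\Omega_h$, invoke \cite[Theorem 14.5.5]{Con} to translate continuous extendability into local connectedness of $\partial\Omega_h$, use continuity of $k$ and the Thouless formula to identify continuous extendability of $F$ with continuity of $\gamma$ on $\C$, and then verify that local connectedness of $\partial\Omega_h$ is equivalent to (i) and (ii). Your added details — the description of $\partial\Omega_h$ via $\liminf h$, the reformulation of (i)--(ii) as existence of all one-sided limits of $h$ (using Theorem \ref{T6.1} and upper semicontinuity), and the topologist's-sine-curve obstruction for necessity — are correct and in fact supply more of the omitted argument than the paper itself does.
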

This can be proved by verifying that $\partial\Omega_h$ is locally connected if and only
if (i), (ii) hold. Note that as $k(z)$ is always continuous on $\C^+\cup\R$, the conformal
map $w$ has a continuous extension to this set if and only if $\gamma$ has this property
(and in this case, $\gamma$ extends continuously to all of $\C$, by the Thouless formula).
Also, this condition is of course equivalent to the possibility of extending $F$ continuously
to $\overline{D}$. Having made these remarks, we omit the detailed proof of Theorem \ref{T6.2}.
An alternative, more direct proof that is based on Theorem \ref{T2.1} is also possible.
\section{More on Lyapunov exponents}
In this section, we discuss $\gamma(x)$ as a function on $x\in [-2,2]$.
Potential theory implies that if $\gamma_1(x)=\gamma_2(x)$ for quasi every (that is,
off a set of capacity zero) such $x$,
then $\gamma_1\equiv\gamma_2$. See \cite[Section I.3]{SaTot}.
So this restriction of $\gamma$ to $[-2,2]$ still contains all the
information. We do not have a description of the set of all these functions,
but we are able to offer the following statements, which supplement Theorems \ref{T3.1}, \ref{T3.2}.
\begin{Theorem}
\label{T7.1}
Let $\gamma_n,\gamma\in\mathcal L$. Then the following conditions are also equivalent to
those from Theorem \ref{T3.1}:\\
(a)
\begin{equation}
\label{7.2}
\sup_{-2\le x\le 2} \varphi(x)\gamma(x) =
\lim_{n\to\infty} \sup_{-2\le x\le 2} \varphi(x)\gamma_n(x)
\end{equation}
for all $\varphi\in C[-2,2]$, $\varphi\ge 0$.\\
(b) The $\gamma_n(x)$ ($n\ge 1$, $-2\le x\le 2$) are uniformly bounded,
and if $\nu\in\mathcal P$ (defined below), then
\begin{equation}
\label{7.12}
\lim_{n\to\infty} \int_{[-2,2]} \left| \gamma(x)-\gamma_n(x) \right| \, d\nu(x) = 0 .
\end{equation}
\end{Theorem}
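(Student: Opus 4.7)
The plan is to show the chain Theorem \ref{T3.1} conditions $\Rightarrow$ (a) $\Rightarrow$ (b) $\Rightarrow$ Theorem \ref{T3.1} conditions, using the Thouless formula and Theorem \ref{T2.1} as the bridges between $\gamma$ on $[-2,2]$ and $h$ on $S^1$. A useful preliminary observation is that uniform boundedness of $\gamma_n$ on $[-2,2]$ is equivalent to $A_n$ being bounded away from $0$: one direction is the Thouless bound $\gamma_n(x)\le -\ln A_n+\ln 4$, and the other follows by integrating the Thouless formula against the equilibrium measure of $[-2,2]$, whose logarithmic potential vanishes on $[-2,2]$, yielding $\int\gamma_n\,d\omega_0=-\ln A_n$. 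Uniform boundedness is therefore automatic under each of Theorem \ref{T3.1} (since $A_n\to A>0$), assumption (a) with $\varphi\equiv 1$, and assumption (b).

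For Theorem \ref{T3.1} $\Rightarrow$ (a), I use the monotonicity $\gamma_n(x)\le\gamma_n(x+i\epsilon)$, immediate from the Thouless formula since $|t-x-i\epsilon|\ge|t-x|$. For the $\limsup$ half, I take a maximizer $x_n$ of $\varphi\gamma_n$ and extract $x_n\to x^*$; then $\gamma_n(x_n)\le\gamma_n(x_n+i\epsilon)\to\gamma(x^*+i\epsilon)$ by Theorem \ref{T3.1}(c), and sending $\epsilon\to 0+$ via Proposition \ref{P2.1}(b), together with continuity of $\varphi$, yields $\limsup\sup\varphi\gamma_n\le\sup\varphi\gamma$. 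For the $\liminf$ half, I let $x^*$ attain $\sup\varphi\gamma$, set $\alpha^*=\pi k(x^*)$, and apply Theorem \ref{T3.1}(f) to a continuous bump localized at $e^{i\alpha^*}$ to produce angles $\alpha_n\to\alpha^*$ with $L_n(\alpha_n)\to L(\alpha^*)\ge\gamma(x^*)$ (via Theorem \ref{T2.1}); then I select $t_n$ in the preimage $(\pi k_n)^{-1}(\alpha_n)$ with $\gamma_n(t_n)$ near $L_n(\alpha_n)$, and use weak convergence $dk_n\to dk$ plus continuity of $k$ to see that $t_n$ approaches $k^{-1}(\alpha^*/\pi)$; continuity of $\varphi$ then gives the required lower bound.

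For (a) $\Rightarrow$ (b), uniform boundedness together with Proposition \ref{P3.43} and Lemma \ref{P3.42} (with Theorem \ref{T3.2} excluding $A_n\to 0$) produces a weak-$*$ cluster point $(B,d\nu)\in\mathcal{D}$ of $(A_n,dk_n)$ along any subsequence. A uniqueness argument that tests (a) against bump functions approximating indicators of points forces the two Lyapunov exponents to agree pointwise (using upper semicontinuity of $\gamma$ and $\gamma_{B,\nu}$), so $(B,d\nu)=(A,dk)$; this puts us in the situation of Theorem \ref{T3.1}, hence $\gamma_n\to\gamma$ locally uniformly on $\C^+$. The principle of descent for logarithmic potentials, combined with continuity of $dk$, then gives pointwise convergence off a capacity-zero set, and dominated convergence (with the uniform bound) finishes the $L^1(\nu)$ statement. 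For (b) $\Rightarrow$ Theorem \ref{T3.1}, integrating the Thouless formula against $\nu\in\mathcal{P}$ gives $\int\gamma_n\,d\nu=-\ln A_n+\int\!\!\int\ln|t-x|\,dk_n(t)\,d\nu(x)$, and as $\mathcal{P}$ provides a separating family of test measures, this recovers $A_n\to A$ and weak-$*$ convergence of $dk_n$.

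The main obstacle is the $\liminf$ direction of step 2 when $\alpha^*/\pi$ is a gap label of $k$: the preimage $k^{-1}(\alpha^*/\pi)$ is then a closed interval on which $\varphi$ may vary, so the chosen $t_n$ may converge to a point different from $x^*$. This is resolved by exploiting that $\gamma$ is harmonic (hence continuous) on the interior of any gap, so $L(\alpha^*)$ is attained at a specific endpoint of the preimage interval; choosing the localization bump on $S^1$ and the companion points $t_n$ to target that endpoint, one recovers the required lower bound for $\varphi(t_n)\gamma_n(t_n)$.
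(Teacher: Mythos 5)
Your $\limsup$ half of ``Theorem \ref{T3.1} $\Rightarrow$ (a)'' is correct and is a nice shortcut (the monotonicity $\gamma_n(x)\le\gamma_n(x+i\epsilon)$ plus locally uniform convergence on $\C^+$ and Proposition \ref{P2.1}(b) does the job, with the preliminary uniform bound handling the case $\varphi(x^*)=0$). The $\liminf$ half, however, has a genuine gap. Your route through Theorem \ref{T3.1}(f) and Theorem \ref{T2.1} only controls $L_n(\alpha)=\sup\{\gamma_n(t):\pi k_n(t)=\alpha\}$, a supremum over an entire level set of $k_n$. When $\alpha^*/\pi$ is a gap label, the whole interval $[a,b]=k^{-1}(\alpha^*/\pi)$ collapses to the single angle $\alpha^*$, so the slit-height data cannot tell you \emph{where} near $[a,b]$ the near-maximizers $t_n$ of $\gamma_n$ sit, and $\varphi$ may be small there. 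Your proposed repair rests on a false premise: by the Thouless formula, $\gamma''(t)=-\int dk(s)/(s-t)^2<0$ on the gap, so $\gamma$ is strictly concave there and its maximum over $[a,b]$ is typically attained at an \emph{interior} point, not an endpoint (this already happens for equilibrium measures of finite gap sets). And even if the maximum were at an endpoint, nothing in condition (f) lets you ``target'' a particular point of the level set: if $\varphi$ is concentrated near one end of the gap while $\gamma$ peaks near the middle, you get no lower bound on $\sup\varphi\gamma_n$ at all. What is needed here is genuinely pointwise information; the paper gets it from the lower envelope theorem ($\gamma(x)=\limsup_n\gamma_n(x)$ for quasi every $x$) together with the fine continuity of $\gamma$, which allows one to perturb the maximizer $x^*$ off the exceptional capacity-zero set.

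There is a second gap in ``(a) $\Rightarrow$ (b)''. The principle of descent gives only $\gamma(x)\ge\limsup_n\gamma_n(x)$ everywhere, and the lower envelope theorem upgrades this to equality of the $\limsup$ quasi everywhere; neither asserts that $\lim_n\gamma_n(x)$ exists, so ``pointwise convergence off a capacity-zero set'' is unproven (and is not established, even in the paper, for the full sequence). Consequently your dominated convergence step has nothing to rest on. The repair is to combine the one-sided, Egorov-type consequence of the principle of descent ($\gamma_n\le\gamma+\eta$ off a set of $\nu$-measure $<\eta$, for all large $n$) with the convergence of the integrals
\[
\int\gamma_n\,d\nu=-\ln A_n+\int\Phi_{\nu}(t)\,dk_n(t)\to\int\gamma\,d\nu ,
\]
valid because $\Phi_{\nu}$ is continuous; if $\gamma_n<\gamma-\epsilon$ on a set of $\nu$-measure $\ge\epsilon/2$ along a subsequence, the integrals would drop by a fixed amount, a contradiction. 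This yields convergence in $\nu$-measure, and with the uniform bound one obtains \eqref{7.12}. Your remaining steps --- the equivalence of the uniform bound with $\inf A_n>0$ via $\omega_0$, and the compactness-plus-uniqueness arguments for the converse directions (using Theorem \ref{T7.2} to exclude $A_n\to0$ and the separation of $\mathcal L$ by the functionals in question) --- are sound and essentially match the paper's.
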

Here, we let $\mathcal P$ be the set of probability measures $\nu$ on the Borel sets
of $[-2,2]$ for which the potential
\begin{equation}
\label{0.6}
\Phi_{\nu}(x)\equiv\int_{\R}\ln |t-x|\, d\nu(t)
\end{equation}
is a continuous
function of $x\in\R$. This in particular forces $\nu$ to give zero weight to all sets
of capacity zero. On the other hand, for any compact $K\subset [-2,2]$ of positive capacity,
there exists a $\nu\in\mathcal P$ with $\nu(K^c)=0$. See \cite[Corollary I.6.11]{SaTot}.
So, in some vague sense, one can perhaps say that
the class $\mathcal P$ is equivalent to capacity.

There are limits to this, however.
More specifically, while the $L^1(\nu)$ convergence from (b) of course implies
convergence in measure with respect to every $\nu\in\mathcal P$, that is,
\begin{equation}
\label{7.21}
\nu(|\gamma-\gamma_n|\ge\epsilon)\to 0 \quad\textrm{for every }\epsilon>0 ,
\end{equation}
we are \textit{not }claiming that the \textit{capacity }of the set
where $|\gamma_n-\gamma|\ge\epsilon$ approaches zero, and indeed this latter statement
is false. A counterexample may be constructed by approximating a positive $\gamma\in\mathcal L$,
say $\gamma(x)\equiv 1$ on $[-2,2]$,
by a sequence of $\gamma_n$'s corresponding to finite gap sets $E_n$, as in
the proof of Theorem \ref{Tdata} (compare \eqref{defh1}, \eqref{defh2}). Lemma \ref{L3.11} then shows that
\[
\textrm{cap}\left( \{ x\in [-2,2] : \gamma_n(x)=0 \} \right) = \textrm{cap }E_n= A_n .
\]
By construction, the $A_n$ approach the positive limit $A=F'(0)$, where
$F\in\mathcal C$ is the conformal map associated with $\gamma$ (so if $\gamma\equiv 1$,
then $F(\zeta)=e^{-1}\zeta$, but we don't need to know this here).
\begin{Theorem}
\label{T7.2}
Let $\gamma_n\in\mathcal L$. Then the conditions from Theorem \ref{T3.2} are equivalent to:\\
(a)
\begin{equation}
\label{7.4}
\lim_{n\to\infty} \sup_{-2\le x\le 2}\gamma_n(x) = \infty .
\end{equation}
(b) If $\nu\in\mathcal P$, then
\[
\lim_{n\to\infty} \int_{[-2,2]}\gamma_n(x)\, d\nu(x) = \infty .
\]
\end{Theorem}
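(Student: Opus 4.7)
The plan is to reduce both (a) and (b) to conditions already appearing in Theorem~\ref{T3.2}, using Theorem~\ref{T2.1} for (a) and the Thouless formula for (b). Neither direction should require anything beyond the tools assembled in Sections 2 and 3.

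For (a): Theorem~\ref{T2.1} tells us that
\[
h_n(e^{i\alpha}) = 1 - e^{-L_n(\alpha)}, \qquad L_n(\alpha) = \sup\{\gamma_n(t): -2\le t\le 2,\ \pi k_n(t) = \alpha\},
\]
for $0\le\alpha\le\pi$, and the values over $-\pi\le\alpha<0$ are the same by symmetry. As $\alpha$ ranges over $[0,\pi]$, $L_n(\alpha)$ ranges over exactly the values of $\gamma_n$ on $[-2,2]$, so
\[
\sup_{e^{i\alpha}\in S^1} h_n(e^{i\alpha}) = 1 - \exp\!\Bigl(-\sup_{-2\le x\le 2}\gamma_n(x)\Bigr).
\]
Therefore (a) is literally equivalent to Theorem~\ref{T3.2}(f).

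For (b): the Thouless formula plus Fubini yields
\[
\int_{[-2,2]}\gamma_n(x)\,d\nu(x) = -\ln A_n + \int_{[-2,2]}\Phi_\nu(t)\,dk_n(t).
\]
By hypothesis $\Phi_\nu$ is continuous, hence bounded, on $[-2,2]$, and since $dk_n$ is a probability measure there, the integral on the right is uniformly bounded in $n$. Consequently the left-hand side tends to $+\infty$ if and only if $-\ln A_n\to\infty$, i.e.\ if and only if $A_n\to 0$. The forward implication Theorem~\ref{T3.2}(a) $\Rightarrow$ (b) thus holds for \emph{every} $\nu\in\mathcal P$ simultaneously, and the converse requires only a single test measure: for instance $\nu=d\omega_{[-2,2]}$, which lies in $\mathcal P$ because $[-2,2]$ is regular and whose potential is identically $0$ on $[-2,2]$ (since $\mathrm{cap}\,[-2,2]=1$), so that $\int\gamma_n\,d\omega_{[-2,2]}=-\ln A_n$ directly.

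The only technical point is the Fubini step, and it is benign: $\gamma_n\ge 0$ is bounded on $[-2,2]$ by $-\ln A_n+\ln 4$, and $\ln|t-x|\le\ln 4$ is bounded above on $[-2,2]^2$, so one can either split into positive and negative parts or apply Tonelli to the nonnegative integrand $\gamma_n$ without difficulty. Thus no real obstacle arises; both equivalences are essentially immediate once Theorem~\ref{T2.1} and the Thouless formula are in hand.
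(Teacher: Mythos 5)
Your proof is correct and follows essentially the same route as the paper: (a) is identified with condition (f) of Theorem \ref{T3.2} via Theorem \ref{T2.1}, and (b) is handled by integrating the Thouless formula against $d\nu$ and using the boundedness of $\Phi_\nu$. The only (harmless) difference is at the very end: the paper closes the cycle by noting (b) trivially implies (a), whereas you deduce $A_n\to 0$ directly from a single test measure such as $\omega_{[-2,2]}$ — both are fine.
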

Since \eqref{7.2} and \eqref{7.4} are analogous to conditions (f)
from Theorems \ref{T3.1} and \ref{T3.2}, respectively,
and, moreover, $\gamma$ and $h$ are directly related through changes of variables
(and a partial maximization), as spelled out in Theorem \ref{T2.1},
it seems tempting to try to relate these directly.
We are going to give a different, more indirect argument, however, which seems easier
and more convenient.
\begin{proof}[Proof of Theorem \ref{T7.2}]
We start with this because we will use Theorem \ref{T7.2} in our proof of Theorem \ref{T7.1}.
The equivalence of (a) with the conditions of Theorem \ref{T3.2} is an immediate
consequence of Theorem \ref{T2.1}, which in particular implies
that for any $\gamma\in\mathcal L$, the associated slit height function satisfies
\[
\sup_{0\le\alpha\le\pi} h(e^{i\alpha}) = 1-\exp\left( -\sup_{-2\le x\le 2}\gamma(x)\right) .
\]
So \eqref{7.4} holds if and only if $\sup h_n\to 1$, which is condition (f) from
Theorem \ref{T3.2}.

Next, assume that $A_n\to 0$ (this is (a) of Theorem \ref{T3.2}). We want to
derive (b) from this. Integrate the Thouless formula with respect to $d\nu$.
With the help Fubini's Theorem, this gives
\[
\int_{[-2,2]}\gamma_n(x)\, d\nu(x) = -\ln A_n + \int_{[-2,2]}\Phi_{\nu}(t)\, dk_n(t) .
\]
Here, $\Phi_{\nu}$ is continuous by assumption, hence bounded, and thus the
integrals on the right-hand side stay bounded, and (b) follows.

Finally, if (b) is assumed, then (a) follows trivially.
\end{proof}

In the next proof, we will make repeated use of two fundamental potential theoretic results,
the \textit{lower envelope theorem }and the
\textit{principle of descent. }We will state them here,
but please refer to \cite[Theorems I.6.8, I.6.9]{SaTot} for a fuller discussion.

Suppose that $dk_n\to d\nu$ in weak-$*$ sense. Then
\[
\Phi_{\nu}(x) = \limsup_{n\to\infty} \Phi_n(x)
\]
for quasi every $x\in [-2,2]$ (the \textit{lower envelope theorem}).
Here, the logarithmic potential $\Phi_{\nu}$ of a measure $\nu$ is again defined
by \eqref{0.6}, and we of course further abbreviated $\Phi_n\equiv\Phi_{dk_n}$.

This is supplemented by
the \textit{principle of descent, }which says that
\[
\Phi_{\nu}(z) \ge \limsup_{n\to\infty} \Phi_n(z)
\]
for all $z\in\C$.
Again, this is interesting for $z=x\in [-2,2]$. On the complement of this set, the
stronger property of locally
uniform convergence is obvious.
\begin{proof}[Proof of Theorem \ref{T7.1}]
We first show that the conditions of Theorem \ref{T3.1} imply (a).
Let $\varphi\in C[-2,2]$, $\varphi\ge 0$ be given.
As in the proof of Lemma \ref{L3.41}, we will split \eqref{7.2} into two inequalities.
We first show that
\begin{equation}
\label{7.1}
\sup\varphi\gamma \ge \limsup_{n\to\infty} \left( \sup \varphi\gamma_n \right) .
\end{equation}
Since the functions $\varphi\gamma_n$ are upper semicontinuous,
the suprema are maxima, so if \eqref{7.1} were wrong, we would find ourselves in the
following situation:
\begin{equation}
\label{3.3}
\sup\varphi\gamma \le \varphi(x_n)\gamma_n(x_n) - \epsilon ,
\end{equation}
for all $n$ from a suitable subsequence and certain points $x_n\in [-2,2]$,
and here can also assume that $x_n\to x\in [-2,2]$ along that same sequence.
Let $d\nu_n$ be a shifted version of $dk_n$;
more precisely,
\[
\int f(t)\, d\nu_n(t) = \int f(t+x-x_n)\, dk_n(t)
\]
for $f\in C(\R)$. Notice that $\Phi_{\nu_n}(x)=\Phi_{dk_n}(x_n)$.
By (a) of Theorem \ref{T3.1}, $dk_n\to dk$ in weak-$*$ sense and
thus also $d\nu_n\to dk$ along the subsequence that was chosen above.
Since, furthermore, $A_n\to A$, the principle of descent now says that
\[
\gamma(x)\ge \limsup \gamma_n(x_n)
\]
(the $\limsup$ is taken along some subsequence, but this
is irrelevant here).
Since $\varphi$ is continuous, this contradicts \eqref{3.3}, unless $\varphi(x)=0$.
However, if $\varphi(x)=0$, then \eqref{3.3} implies that $\gamma_n(x_n)\to\infty$,
and we again obtain a contradiction, this time to Theorem \ref{T7.2}.
We have established \eqref{7.1}.

Next, we show that also
\begin{equation}
\label{7.3}
\sup\varphi\gamma \le \liminf_{n\to\infty} \left( \sup \varphi\gamma_n \right) ,
\end{equation}
and this together with \eqref{7.1} will of course establish \eqref{7.2}.
Again, we argue by contradiction.
If \eqref{7.3} failed, then we would find a subsequence and $x\in [-2,2]$ so that
\begin{equation}
\label{3.4}
\varphi(x)\gamma(x) \ge \varphi(t)\gamma_n(t) + 2\epsilon
\end{equation}
for all $t\in [-2,2]$ and all $n$ from that sequence.
We can now use the fact that $\gamma$ is continuous
with respect to the fine topology and slightly change $x$ to obtain another inequality
of this type (with $2\epsilon$ replaced by $\epsilon$, say), where we can now also guarantee
that $x$ is not from the exceptional capacity zero set from the lower envelope theorem.
Thus $\gamma(x)=\limsup\gamma_n(x)$. Here, the
$\limsup$ is taken along the same subsequence that was singled out above (this is important);
in other words, we applied the lower envelope theorem to this subsequence and not
to the original sequence. We obtain a contradiction to \eqref{3.4} with $t=x$.

To prove that, conversely, (a) above implies part (a) from Theorem \ref{T3.1},
we again exploit the compactness properties that were discussed in Sections 4.
Suppose that \eqref{7.2} holds.
We can pass to a subsequence so that $A_n\to B$, $dk_n\to d\nu$. Here, by Lemma \ref{P3.42},
either $B=0$ or $(B,d\nu)\in\mathcal D$. The first case is impossible because then
Theorem \ref{T7.2} would imply
that \eqref{7.4} holds on the subsequence we chose, but this
is clearly incompatible with our assumption that we have \eqref{7.2}.

So $(B,d\nu)\in\mathcal D$, but then, by what we showed already,
\begin{equation}
\label{7.6}
\lim \left( \sup\varphi\gamma_n \right) = \sup\varphi\gamma_{(B,d\nu)}
\end{equation}
along the subsequence constructed, for all $\varphi\in C[-2,2]$, $\varphi\ge 0$.
However, limits in this
sense are unique. In other words, if $\gamma,\widetilde{\gamma}\in\mathcal L$ are not the
same function, then
\begin{equation}
\label{7.5}
\sup_{-2\le x\le 2} \varphi(x)\gamma(x) \not=
\sup_{-2\le x\le 2} \varphi(x)\widetilde{\gamma}(x)
\end{equation}
for some nonnegative $\varphi\in C[-2,2]$.
Indeed, if $\gamma(x_0)<\widetilde{\gamma}(x_0)$, say, for some $x_0\in [-2,2]$, then, as
$\gamma$ is upper semicontinuous, we in fact have that
$\gamma(x)\le\gamma(x_0)-\epsilon$ for all $x$
from some neighborhood of $x_0$ also, so we can simply
take a $\varphi$ that is supported by this neighborhood,
$0\le\varphi\le 1$, and $\varphi(x_0)=1$, and we are then guaranteed that \eqref{7.5} holds.

This uniqueness means that \eqref{7.6} forces $\gamma_{(B,d\nu)}$ to be the
function $\gamma$ from \eqref{7.2}, and thus, by the uniqueness part of Theorem \ref{Tdata},
$(B,d\nu)=(A,dk)$, the data associated with $\gamma$. So this is the only possible limit
point of the sequence $(A_n,dk_n)$, but any subsequence has a limit point, thus the whole
sequence has to approach this limit, and this is condition (a) from Theorem \ref{T3.1}.

Next, we again assume the conditions from Theorem \ref{T3.1}, and we now wish to
establish (b). First of all, we certainly have that $\gamma_n(x)\le C$ for all $n,x$ and some
uniform bound $C$. We have already shown that \eqref{7.2} holds under the present assumptions,
so we can now obtain this uniform bound very conveniently by just taking $\varphi\equiv 1$
in this condition.

So we can focus on \eqref{7.12}. Fix a $\nu\in\mathcal P$.
We will show that $\gamma_n\to\gamma$ in measure, that is,
\eqref{7.21} holds. This is sufficient because, as just discussed, $0\le\gamma_n,\gamma\le C$,
so $L^1(\nu)$ convergence will follow from this.

We will argue by contradiction and thus assume hypothetically that \eqref{7.21} fails.
Then there exists $\epsilon>0$ so that
\begin{equation}
\label{7.51}
\nu( |\gamma-\gamma_n|\ge\epsilon)\ge\epsilon
\end{equation}
for all $n$ taken from some subsequence.

Recall that $\gamma(x)\ge\limsup\gamma_n(x)$ for all $x$ by the principle of descent. So if
we are given an $\eta >0$, we can find an integer $N=N(x,\eta)$ so that
\[
\gamma_n(x)\le\gamma(x) + \eta \quad\quad \textrm{for all }n\ge N .
\]
We can also choose these integers $N(x,\eta)$ as a measurable function of $x\in [-2,2]$. Then
$\nu(N>N_0)\to 0$ as $N_0\to\infty$ by monotone convergence, so we can in fact find a
(constant) integer $N_0$ and an exceptional set $\mathcal E\subset [-2,2]$ with
$\nu(\mathcal E)<\eta$ so that
\[
\gamma_n(x)\le \gamma(x) + \eta
\]
whenever $n\ge N_0$ and $x\notin\mathcal E$. If we take $\eta<\epsilon/2$, say, then
\eqref{7.51} now has the more specific consequence that
\[
\nu( \gamma-\gamma_n\ge\epsilon)\ge \frac{\epsilon}{2}
\]
for all $n\ge N_0$ from the sequence that was determined earlier. Abbreviate
\[
S_n = \{ x\in [-2,2] : \gamma_n(x)\le\gamma(x)-\epsilon \} ;
\]
then, as just observed, $\nu(S_n)\ge \epsilon/2$ for these $n$.
It follows that
\begin{align*}
\int_{[-2,2]}\gamma_n(x)\, d\nu(x) & = \int_{S_n}\gamma_n(x)\, d\nu(x) +
\int_{S_n^c}\gamma_n(x)\, d\nu(x) \\
& \le \int_{S_n}\gamma(x)\, d\nu(x) - \frac{\epsilon^2}{2} +
\int_{S_n^c}\gamma_n(x)\, d\nu(x)\\
& \le \int_{[-2,2]}\gamma(x)\, d\nu(x) + (C+1)\eta - \frac{\epsilon^2}{2} .
\end{align*}
To obtain the last line, we further split
$S_n^c$ into two parts. On $S_n^c\cap\mathcal E^c$, we have the inequality
$\gamma_n\le\gamma+\eta$, so this part of the integral may be estimated
by $\int_{S_n^c}\gamma\, d\nu + \eta$, and on $S_n^c\cap\mathcal E$,
we just use that $\gamma_n\le C$ and $\nu(\mathcal E)<\eta$.

If we took $\eta>0$ so small that $(C+1)\eta < \epsilon^2/2$, then this says that
$\int\gamma_n\, d\nu \le \int\gamma\, d\nu -\delta$ for some $\delta>0$ and all
$n$ from a certain subsequence. This is impossible because we can also show that
$\int\gamma_n\, d\nu\to \int\gamma\, d\nu$. This is done as above, by integrating
the Thouless formula and using Fubini's Theorem:
\begin{align*}
\int_{[-2,2]}\gamma_n(x)\, d\nu(x) & = -\ln A_n + \int_{[-2,2]}\Phi_{\nu}(t)\, dk_n(t) \\
& \to -\ln A + \int_{[-2,2]}\Phi_{\nu}(t)\, dk(t)\\
& = \int_{[-2,2]}\gamma(x)\, d\nu(x) ,
\end{align*}
because $A_n\to A>0$ and $dk_n\to dk$ in weak-$*$ sense by assumption, and, also by
assumption, $\Phi_{\nu}$ is a continuous function.
This contradiction proves \eqref{7.21}.

Conversely, if (b) is assumed, we repeat the argument from above: Consider any subsequence on
which $A_n\to B\ge 0$, $dk_n\to d\rho$. We want to show that then necessarily
$B=A>0$, $d\rho=dk$, where $(A,dk)\in\mathcal D$ are the data of $\gamma$.
As above, $B=0$ is impossible because then Theorem \ref{T7.2}(b) would apply on the
corresponding subsequence, and this is incompatible with our assumption that
\eqref{7.12} holds. So $(B,d\rho)\in\mathcal D$ by Lemma \ref{P3.42}.
As a consequence, by what we showed already, $\gamma_n\to\gamma_{(B,d\rho)}$ in $L^1(\nu)$
along the corresponding subsequence.
Thus $\gamma_{(B,d\rho)}(x)=\gamma(x)$ almost everywhere with respect to $\nu$ for
all $\nu\in\mathcal P$.
This implies that $\gamma_{(B,d\rho)}(x)=\gamma(x)$
for quasi every $x\in [-2,2]$ because, as we reviewed above, any positive capacity set admits
a measure $\nu\in\mathcal P$ that is supported by it.
We conclude that $\gamma_{(B,d\rho)}=\gamma$ are the same function, thus
$(B,d\rho)=(A,dk)$ by the uniqueness part of Theorem \ref{Tdata}.
\end{proof}
\section{Positive Lyapunov exponents}
In this section, we present a variation on a theme composed by Avila and Damanik \cite{AD}.
These authors show that if an ergodic system is fixed and factors (= homomorphic images)
are considered, then generically the Lyapunov exponent is positive Lebesgue almost
everywhere, with respect to a natural topology.

The material discussed in this paper provides a very natural approach to these issues.
The key fact is the following consequence of Theorem \ref{T7.1}(b).
\begin{Lemma}
\label{L8.1}
Let $\nu\in\mathcal P$.
For any $a,b\ge 0$, the set
\[
S(a,b) = \left\{ \gamma\in\mathcal L : \nu(\gamma\le a)\ge b \right\}
\]
is a closed subset of (the metric space) $\mathcal L$.
\end{Lemma}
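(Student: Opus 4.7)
The plan is to establish the slightly stronger statement that the functional
\[
\mathcal L \ni \gamma \mapsto \nu(\{x\in[-2,2]: \gamma(x)\le a\})
\]
is upper semicontinuous on $\mathcal L$; the closedness of $S(a,b)$ then follows at once since $S(a,b)$ is the preimage of $[b,\infty)$ under this functional.

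Concretely, I would take a sequence $\gamma_n\in S(a,b)$ converging to $\gamma$ in $\mathcal L$ and aim to prove $\nu(\{\gamma\le a\})\ge b$. The key input is Theorem \ref{T7.1}(b): since convergence in $\mathcal L$ is by definition any (hence all) of the conditions of Theorem \ref{T3.1}, we obtain $\gamma_n\to\gamma$ in $L^1(\nu)$ for the given $\nu\in\mathcal P$. I would then extract a subsequence $\gamma_{n_k}$ that converges to $\gamma$ $\nu$-almost everywhere, and set $A_n=\{x:\gamma_n(x)\le a\}$ and $A=\{x:\gamma(x)\le a\}$; these sets are Borel (in fact $G_\delta$) because $\gamma$ and the $\gamma_n$ are upper semicontinuous on $[-2,2]$. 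At any point $x$ of $\nu$-a.e.\ convergence, if $x\in\limsup_k A_{n_k}$ then $\gamma_{n_{k_j}}(x)\le a$ along some sub-subsequence, so $\gamma(x)\le a$ and $x\in A$. Therefore $\limsup_k A_{n_k}\subset A$ modulo a $\nu$-null set.

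Since $\nu$ is a probability (hence finite) measure, the reverse Fatou lemma applied to the characteristic functions $\chi_{A_{n_k}}$ yields
\[
\nu(A)\ge\nu\bigl(\limsup_k A_{n_k}\bigr)\ge\limsup_k \nu(A_{n_k})\ge b,
\]
which is the desired conclusion. I expect the only real (and still mild) subtlety to be that $L^1(\nu)$ convergence alone does not let one compare the set $\{\gamma\le a\}$ to the sets $\{\gamma_n\le a\}$ directly, so one must first pass to an a.e.\ convergent subsequence; once that is done everything reduces to an application of reverse Fatou.
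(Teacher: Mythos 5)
Your proof is correct and follows essentially the same route as the paper: both arguments rest on the $L^1(\nu)$ (equivalently, in-measure) convergence supplied by Theorem \ref{T7.1}(b), and differ only in the final measure-theoretic step --- you pass to a $\nu$-a.e.\ convergent subsequence and apply reverse Fatou, whereas the paper deduces $\nu(\gamma\le a+\epsilon)\ge b-\epsilon$ directly from convergence in measure and then sends $\epsilon\to 0$. Both finishing arguments are routine and equally valid.
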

Here, we again use the customary self-explanatory notation where a condition is
used to denote the set it defines.
\begin{proof}
Let $\nu\in\mathcal P$.
Suppose that $\gamma_n\in S(a,b)$, $\gamma\in\mathcal L$, $\gamma_n\to\gamma$ in the
sense of Theorem \ref{T3.1}(c) or one of the equivalent descriptions of this mode of
convergence.

Given $\epsilon>0$, no matter how small, Theorem \ref{T7.1}(b), or rather its
consequence \eqref{7.21}, lets us find an integer $N$
and an exceptional set $\mathcal E\subset [-2,2]$, such that $\nu(\mathcal E)<\epsilon$ and
$|\gamma_N(x)-\gamma(x)|<\epsilon$ if $-2\le x\le 2$, $x\notin\mathcal E$. Since $\gamma_N\in S(a,b)$
by assumption, this implies that
\[
\nu(\gamma\le a+\epsilon ) \ge b-\epsilon .
\]
With the help of the monotone convergence theorem, one can now check that this condition
for arbitrary $\epsilon>0$ implies that $\gamma\in S(a,b)$, as desired.
\end{proof}
The Lemma can be rephrased, as follows: The function $\gamma\mapsto\nu(\gamma\le a)$
is upper semicontinuous. Compare this formulation with \cite[Lemma 1]{AD}.
\begin{Corollary}
\label{C8.1}
Let $\nu\in\mathcal P$. Then the set
\[
\left\{ \gamma\in\mathcal L : \gamma(x)>0
\quad\textrm{\rm for }\nu\textrm{\rm -almost every }x \right\}
\]
is a dense $G_{\delta}$ subset of the compact metric space $\mathcal L_0$.
\end{Corollary}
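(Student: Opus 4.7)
The plan is to exhibit the set in question as a countable intersection of open dense subsets of the compact metric space $\mathcal{L}_0$ and then invoke the Baire category theorem.

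For the $G_\delta$ structure, I would first observe that since $\gamma\ge 0$ on $[-2,2]$, the condition $\nu(\{\gamma=0\})=0$ is equivalent, via monotone continuity of $\nu$ on the decreasing sequence $\{\gamma\le 1/k\}$, to the condition that for every $n$ there is some $k$ with $\nu(\gamma\le 1/k)<1/n$. Hence
\[
\{\gamma\in\mathcal{L}:\gamma>0\ \nu\textrm{-a.e.}\}\;=\;\bigcap_{n=1}^\infty U_n,\qquad U_n:=\bigcup_{k=1}^\infty\bigl(\mathcal{L}\setminus S(1/k,1/n)\bigr).
\]
Lemma \ref{L8.1} says each $S(1/k,1/n)$ is closed in $\mathcal{L}$. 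Since $\mathcal{L}_0$ is a metric space, $\{\infty\}$ is closed there, so $\mathcal{L}$ is open in $\mathcal{L}_0$; therefore each $\mathcal{L}\setminus S(1/k,1/n)$, and consequently each $U_n$, is open in $\mathcal{L}_0$.

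The substantive step is density, and Baire reduces this to showing that every $U_n$ is dense in $\mathcal{L}_0$. The key device is that the Thouless formula makes $\gamma$ depend affinely on the pair $(\ln A,dk)$, and condition \eqref{cond1} is preserved under convex combinations in $(\ln A,dk)$; thus such convex combinations remain in $\mathcal{D}$. I would single out the reference data $(A_1,dk_1)=(e^{-1},d\omega_{[-2,2]})$, whose Thouless integral gives $\gamma_1(x)\equiv 1$ on $[-2,2]$ since $\textrm{cap}[-2,2]=1$. Given an arbitrary $\gamma_0\in\mathcal{L}$, set
\[
\gamma_\alpha:=\alpha\gamma_1+(1-\alpha)\gamma_0,\qquad 0<\alpha<1.
\]
Then $\gamma_\alpha\ge\alpha$ everywhere on $[-2,2]$, so $\nu(\gamma_\alpha\le 1/k)=0$ as soon as $1/k<\alpha$, which places $\gamma_\alpha\in U_n$ for every $n$. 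Since $A_\alpha\to A_0$ and $dk_\alpha\to dk_0$ in weak-$*$ as $\alpha\to 0+$, Theorem \ref{T3.1} gives $\gamma_\alpha\to\gamma_0$ in $\mathcal{L}_0$. To approximate the added point at infinity, the constant exponents $\gamma\equiv C$, i.e., the data $(e^{-C},d\omega_{[-2,2]})$, all lie in $U_n$ (being strictly positive on $[-2,2]$) and tend to $\infty$ in $\mathcal{L}_0$ as $C\to\infty$ by Theorem \ref{T3.2}.

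I expect the only real subtlety to be the bookkeeping around the added point $\infty$: verifying that the $U_n$ are open in $\mathcal{L}_0$ (not merely in $\mathcal{L}$), and that the density argument covers neighborhoods of $\infty$ as well. Both points reduce to the simple observations that $\mathcal{L}$ is an open subset of $\mathcal{L}_0$ and that the family $\gamma\equiv C$, $C\to\infty$, furnishes explicit approximants of $\infty$ lying inside every $U_n$.
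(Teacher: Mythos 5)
Your proposal is correct and follows essentially the same route as the paper: the same $G_\delta$ decomposition via the closed sets $S(a,b)$ of Lemma \ref{L8.1} (with openness in $\mathcal L_0$ coming from $\mathcal L$ being open there), and density via perturbing an arbitrary $\gamma$ to be uniformly positive on $[-2,2]$ and letting the perturbation shrink. The only cosmetic difference is that the paper perturbs by adding the constant $1/n$ (i.e.\ replacing $A$ by $Ae^{-1/n}$) rather than taking a convex combination with the equilibrium data $(e^{-1},d\omega_{[-2,2]})$; both achieve the same effect.
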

Recall that $\mathcal L_0$ was defined as the one-point compactification of $\mathcal L$;
please review Proposition \ref{P3.43} and its discussion in this context.

The Corollary has further implications because,
by the classical Kotani theory \cite{Kotac}, absolutely continuous
spectrum for ergodic systems corresponds to zero Lyapunov exponents. See \cite{AD} for these aspects
of the Corollary.
\begin{proof}
By Lemma \ref{L8.1}, the sets
\[
U(a,b) =S(a,b)^c = \left\{ \gamma\in\mathcal L : \nu(\gamma>a)>1-b \right\}
\]
are open in $\mathcal L$ and thus also in $\mathcal L_0$. Monotone convergence shows
that $\nu(\gamma>0)=\lim_{a\to 0+}\nu(\gamma>a)$, so the set from the Corollary
may be represented as follows
\[
\bigcap_{n\ge 1} \bigcup_{a>0} U(a,1/n) ;
\]
it is a countable intersection of open sets, as claimed. It is also dense
because for any $\gamma(z) \in\mathcal L$, we have that $\gamma(z)+1/n\in\mathcal L$ also,
and this sequence converges to $\gamma(z)$ in $\mathcal L$. (Approximation of
$\gamma=\infty$ by members of the set from the Corollary is of course a trivial assignment.)
\end{proof}
\section{Ergodic measures}
Return to the discussion of Section 5. We are given a $\Gamma\in\mathcal L$ (or other
data with the properties from Definition \ref{D3.6}), and we constructed an
invariant measure $\mu\in\mathcal M_0$ so that $\Gamma=\gamma_{\mu}$.
We cannot guarantee that $\mu$ will be ergodic here (even if Proposition \ref{Pgamma0}
already provides the correct $\mu$ and we choose the approximating measures $\mu_n$
as ergodic measures, really nothing has been achieved
because a limit of ergodic measures need not be ergodic itself). It is natural
to ask if it is also possible to find an ergodic $\mu$ so that $\Gamma=\gamma_{\mu}$.

Unfortunately, we don't have anything substantially new to say on this interesting
question. Basically, we will review and put into context some observations made by
Kotani in \cite{Kot85}, and then point out some obvious open questions.
\begin{Proposition}
\label{P9.1}
Suppose that $\Gamma\in\mathcal L$ is an extreme point of the convex set $\mathcal L$.
Then there exists an \textrm{ergodic }measure $\mu\in\mathcal M_0$ so that $\Gamma=\gamma_{\mu}$.
\end{Proposition}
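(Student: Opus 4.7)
The plan is to take the measure $\mu\in\mathcal M_0$ with $\gamma_\mu=\Gamma$ provided by Theorem \ref{T5.1} and to extract from its ergodic decomposition a single ergodic component that still produces $\Gamma$. Since $\mathcal J_2$ is a compact metrizable $S$-space, the set of $S$-invariant Borel probability measures is a Choquet simplex whose extreme points are exactly the ergodic measures, so $\mu=\int\nu\,dP(\nu)$ for a probability measure $P$ concentrated on the Borel set of ergodic invariant measures. Because $\ln a_0\le\ln 2$ is bounded above, Fubini gives $\ln A_\mu=\int\ln a_0\,d\mu=\int\ln A_\nu\,dP(\nu)$, so $\ln A_\nu>-\infty$ for $P$-a.e.\ $\nu$; equivalently, $\nu\in\mathcal M_0$ on a set of full $P$-measure.

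Next I would set up an integral representation of $\Gamma$ over $P$. From the definition \eqref{defDOS} applied to $\mu$ and to each $\nu$, one has $dk_\mu=\int dk_\nu\,dP(\nu)$ in weak-$*$ sense, and, as just noted, $\ln A_\mu=\int\ln A_\nu\,dP$. Combined with the Thouless formula and Fubini (which applies because $\ln|t-z|$ is bounded for $t\in[-2,2]$ and $z\in\C^+$ fixed), this yields $\Gamma(z)=\gamma_\mu(z)=\int\gamma_\nu(z)\,dP(\nu)$ on $\C^+$. For any measurable set $B$ of ergodic measures with $0<P(B)<1$, define
\[
\ln A_B=\frac{1}{P(B)}\int_B\ln A_\nu\,dP,\qquad dk_B=\frac{1}{P(B)}\int_B dk_\nu\,dP,
\]
and analogously on $B^c$. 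Condition \eqref{cond1} for $(A_B,dk_B)$ reduces to $\frac{1}{P(B)}\int_B\gamma_\nu(z)\,dP\ge 0$, which is clear, so $(A_B,dk_B)\in\mathcal D$ and the associated function $\Gamma_B$ is in $\mathcal L$ by Theorem \ref{Tdata}. By construction $\Gamma=P(B)\,\Gamma_B+P(B^c)\,\Gamma_{B^c}$.

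Extremality of $\Gamma$ in the convex set $\mathcal L$ now forces $\Gamma_B=\Gamma_{B^c}=\Gamma$, and the uniqueness half of Theorem \ref{Tdata} gives $(A_B,dk_B)=(A_\mu,dk_\mu)$ for every such $B$. In particular, for each $f\in C[-2,2]$ the $L^1(P)$ function $\nu\mapsto\int f\,dk_\nu$ has $B$-average equal to $\int f\,dk_\mu$ on every $B$ with $0<P(B)<1$, hence is $P$-a.s.\ equal to $\int f\,dk_\mu$; applying this to a countable dense subset of $C[-2,2]$ and treating $\ln A_\nu$ similarly shows that $(A_\nu,dk_\nu)=(A_\mu,dk_\mu)$ for $P$-a.e.\ $\nu$. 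Any ergodic $\nu\in\mathcal M_0$ in this full-measure set then satisfies $\gamma_\nu=\gamma_\mu=\Gamma$, as required.

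The step I expect to be the main obstacle is verifying that the averaged pairs $(A_B,dk_B)$ really lie in $\mathcal D$ and, equivalently, that $\Gamma_B\in\mathcal L$; this is where the asymptotic normalization in Definition \ref{D3.6}(3) must be carefully preserved under the averaging. The key input is Lemma \ref{L2.5}, whose bound $\ln A_\nu\le 0$ makes $\ln A_\nu$ a $P$-integrable and uniformly bounded-above function, so that $\Gamma_B(iy)=\ln y-\ln A_B+o(1)$ as $y\to\infty$ follows by dominated convergence once one notes that the $o(1)$ error is uniform in $\nu$ (all $dk_\nu$ being supported in $[-2,2]$). With this in place, the extremality argument concludes in one line.
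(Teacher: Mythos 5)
Your argument is correct and follows essentially the same route as the paper: Choquet decomposition of the invariant measure supplied by Theorem \ref{T5.1} into ergodic components, the integral representation $\gamma_{\mu}=\int\gamma_{\nu}\,dP(\nu)$ obtained from the Thouless formula, and extremality applied to splittings of the decomposing measure over measurable sets of components to force $P$-almost everywhere constancy. The only (cosmetic) difference is the endgame: the paper concludes directly that the sets $\{\nu:\gamma_{\nu}(z)\ge\gamma_{\mu}(z)+\epsilon\}$ and their counterparts have measure zero and takes a countable union over $z$, whereas you first show $(A_{\nu},dk_{\nu})=(A_{\mu},dk_{\mu})$ almost everywhere and invoke the uniqueness part of Theorem \ref{Tdata}.
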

This does not come as a big suprise. Ergodic measures are precisely the extreme points
of the set of invariant measures, so one would expect extreme points to play a role here.
The converse of Proposition \ref{P9.1} is false, however. A counterexample is provided by
any ergodic model whose Lyapunov exponent satisfies $\gamma\ge c>0$. This behavior has been
established for the Lyapunov exponent of the Almost Mathieu operator for large coupling
\cite{BouJit} (in fact, Bourgain-Jitomirskaya compute the Lyapunov exponent exactly).
Such a Lyapunov exponent is not an extreme point of $\mathcal L$, for the simple reason
that $\gamma\pm c\in\mathcal L$ also, and of course $\gamma=\frac{1}{2}(\gamma+c + \gamma-c)$.
\begin{proof}
Suppose that $\Gamma\in\mathcal L$ is an extreme point, and let $\mu\in\mathcal M_0$
be an invariant measure so that $\Gamma=\gamma_{\mu}$. We now use Choquet theory
(see \cite{Phelps}, especially Sections 3 and 12 of this reference)
to decompose $\mu=\int\nu\, d\sigma(\nu)$ into ergodic measures
$\nu$ on the Borel sets of $\mathcal J_2$. This means that
\[
\int_{\mathcal J_2} f(J)\, d\mu(J) = \int_{\mathcal M} d\sigma(\nu)\int_{\mathcal J_2}
d\nu(J)\, f(J)
\]
for all bounded Borel functions $f$. Choquet's Theorem says that there is such
a measure $d\sigma$, with the following additional properties: it is a probability measure on the
Borel sets of the space $\mathcal M$ of invariant probability measures on (the Borel sets of)
$\mathcal J_2$ (with the topology induced by the weak-$*$ topology of the regular Borel
measures on $\mathcal J_2$, viewed as the dual of $C(\mathcal J_2)$). Moreover, and this
is crucial, $d\sigma$ is supported by the subset of \textit{ergodic }measures.

We claim that we then also have that
\begin{equation}
\label{9.1}
\gamma_{\mu}(z)=\int_{\mathcal M} \gamma_{\nu}(z)\, d\sigma(\nu)
\end{equation}
for $z\in\C^+$. Indeed, if we set
\[
L_n(J) = \max\{ \ln a_0(J), -n \} ,
\]
say, then monotone convergence, applied a total of three times, shows that
\begin{align*}
\ln A_{\mu} & = \int_{\mathcal J_2}\ln a_0(J)\, d\mu(J)
= \lim_{n\to\infty} \int_{\mathcal J_2} L_n(J)\, d\mu(J) \\
& = \lim_{n\to\infty} \int_{\mathcal M} d\sigma(\nu) \int_{\mathcal J_2} d\nu(J)\, L_n(J)\\
& = \int_{\mathcal M} d\sigma(\nu)\, \lim_{n\to\infty} \int_{\mathcal J_2} d\nu(J)\, L_n(J)
= \int_{\mathcal M} \ln A_{\nu}\, d\sigma(\nu) .
\end{align*}
(This also shows that $d\sigma$ is supported by $\mathcal M_0$.)
Furthermore, by just chasing definitions, we can also easily confirm that
$\int f\, dk_{\mu} = \int d\sigma(\nu)\int dk_{\nu}\, f$ for continuous $f$, so we do obtain \eqref{9.1}
by integrating the Thouless formula for $\gamma_{\nu}$ with respect to $d\sigma$.

Now $\gamma_{\mu}$ is an extreme point by assumption, so if $M\subset\mathcal M$ is any
Borel subset, then necessarily $\int_M\gamma_{\nu}\, d\sigma = \sigma(M)\gamma_{\mu}$
also. In particular, sets of the type
\[
M_{z,\epsilon} = \left\{ \nu\in\mathcal M_0 : \gamma_{\nu}(z)\ge \gamma_{\mu}(z)+\epsilon \right\} ,
\]
with $z\in\C^+$, $\epsilon>0$ all satisfy $\sigma(M_{z,\epsilon})=0$, and of course the same
goes for sets defined by an inequality of the form $\gamma_{\nu}(z)\le \gamma_{\mu}(z)-\epsilon$.
Thus, by taking a suitable countable union, we see that $\gamma_{\nu}\equiv\gamma_{\mu}$
for $\sigma$-almost every $\nu\in\mathcal M_0$. As pointed out above, almost all of these
measures $\nu$ are also ergodic.
\end{proof}
So it would be interesting to know what the extreme points of $\mathcal L$ are.
As observed above, $\gamma$ is not an extreme point if $\inf\gamma >0$. At the other end
of the spectrum, we have the following statement, which we adapted
from \cite[Theorem 6.3]{Kot85} and its proof.
\begin{Proposition}
\label{P9.2}
Let $(A,dk)\in\mathcal D$, and let $\gamma\in\mathcal L$ be the corresponding
Lyapunov exponent. Write $E=\textrm{\rm top supp }dk\subset [-2,2]$.
Suppose that one of the following equivalent conditions holds:\\
(a) $A=\textrm{\rm cap }E$, $dk=d\omega_E$;\\
(b) $\gamma(t)=0$ for quasi every $t\in E$;\\
(c) $\gamma(t)=0$ for $\omega_E$-almost every $t\in E$.

Then $\gamma$ is an extreme point of $\mathcal L$.
\end{Proposition}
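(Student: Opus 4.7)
The plan is to prove the equivalence of (a), (b), (c) by standard potential theory, then to show extremality by comparing data on both sides of a putative convex decomposition.

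For the equivalences, (a) $\Rightarrow$ (b) follows by inserting $A=\textrm{cap}\,E$ and $dk=d\omega_E$ into the Thouless formula and invoking Frostman's theorem, which says $\int\ln|t-s|\,d\omega_E(s)=\ln\textrm{cap}\,E$ quasi-everywhere on $E$. The implication (b) $\Rightarrow$ (c) is automatic because polar sets are $\omega_E$-null. For (c) $\Rightarrow$ (a), I integrate the Thouless formula against $d\omega_E$; Fubini together with $\gamma\ge 0$ everywhere and $\int\ln|t-s|\,d\omega_E(s)\le\ln\textrm{cap}\,E$ yields $A\ge\textrm{cap}\,E$, while the natural generalization of Lemma \ref{L2.5} (integrate Thouless against $dk$ and use the energy-minimizing property of $\omega_E$) gives $A\le\textrm{cap}\,E$. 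So $A=\textrm{cap}\,E$; the identification $dk=d\omega_E$ then follows from the classical characterization of the equilibrium measure, exactly as invoked near the end of the proof of Lemma \ref{L3.11} via Theorem I.3.1 of \cite{SaTot}.

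For extremality, suppose $\gamma=\frac{1}{2}(\gamma_1+\gamma_2)$ with $\gamma_i\in\mathcal L$ corresponding to data $(A_i,dk_i)\in\mathcal D$ (as provided by Theorem \ref{Tdata}). Comparing the two sides of this identity using the Thouless formula and the uniqueness part of Theorem \ref{Tdata}, together with the asymptotic behavior at infinity, gives $dk=\frac{1}{2}(dk_1+dk_2)$ and $\ln A=\frac{1}{2}(\ln A_1+\ln A_2)$; in particular $\textrm{top supp }dk_i\subseteq E$ and $A^2=A_1A_2$. Writing $\gamma_i=2\gamma-\gamma_j$ with $\gamma_j\ge 0$ shows $\gamma_i\le 2\gamma$ pointwise, and combined with $\gamma=0$ quasi-everywhere on $E$ and $\gamma_i\ge 0$, this forces $\gamma_i=0$ quasi-everywhere on $E$, hence a fortiori on $E_i:=\textrm{top supp }dk_i\subseteq E$.

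Applying the already-established equivalence (b) $\Rightarrow$ (a) to the data $(A_i,dk_i)$ with $E_i$ in place of $E$, I conclude $A_i=\textrm{cap}\,E_i$ and $dk_i=d\omega_{E_i}$. Monotonicity of capacity gives $A_i\le A$, and combined with $A_1A_2=A^2$ this forces $A_1=A_2=A$, so $\textrm{cap}\,E_i=\textrm{cap}\,E$. Since $d\omega_{E_i}$ is then a probability measure on $E$ (using $E_i\subseteq E$) with energy $-\ln\textrm{cap}\,E$, uniqueness of the equilibrium measure as the minimum-energy probability measure on $E$ yields $d\omega_{E_i}=d\omega_E$. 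Hence $(A_i,dk_i)=(A,dk)$ for $i=1,2$, so $\gamma_1=\gamma_2=\gamma$, completing the proof that $\gamma$ is an extreme point.

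The main obstacle is concentrated in the equivalence step, specifically (c) $\Rightarrow$ (a): extracting $dk=d\omega_E$ from the mere $\omega_E$-almost-everywhere vanishing of $\gamma$ on $E$ requires the full strength of the classical Frostman-type characterization of the equilibrium measure. Once that is in hand, the extremality argument itself is a clean combination of measure matching via Thouless, pointwise nonnegativity of the constituents $\gamma_i$, and the multiplicative constraint $A^2=A_1A_2$ forced by the convex combination.
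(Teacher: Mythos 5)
Your overall strategy matches the paper's: establish the equivalence of (a)--(c) via Frostman's theorem and integrated Thouless formulas, then show that a decomposition $\gamma=\frac12(\gamma_1+\gamma_2)$ forces $\gamma_j=0$ quasi everywhere on $E_j\subseteq E$, identify $dk_j=d\omega_{E_j}$ by the already-proved implication (b) $\Rightarrow$ (a), and conclude. The extremality half is correct, and your closing move (capacity monotonicity $A_j\le A$ plus the constraint $A_1A_2=A^2$ plus uniqueness of the energy-extremal measure on $E$) is a valid, mildly different finish from the paper's, which instead argues that $\gamma_j>0$ on $E_j^c$ forces $E_1=E_2=E$ (using that a nonempty relatively open subset of the support of $dk$ has positive capacity). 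Your variant even sidesteps that last capacity remark; either route works.

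There is, however, a directional error in your proof of (c) $\Rightarrow$ (a). You claim that integrating the Thouless formula against $d\omega_E$, together with $\gamma\ge0$ and the \emph{upper} bound $\Phi_{\omega_E}\le\ln\textrm{cap }E$, yields $A\ge\textrm{cap }E$. It does not: from $0\le\int\gamma\,d\omega_E=-\ln A+\int\Phi_{\omega_E}\,dk$ and $\Phi_{\omega_E}\le\ln\textrm{cap }E$ one gets $\ln A\le\int\Phi_{\omega_E}\,dk\le\ln\textrm{cap }E$, i.e.\ $A\le\textrm{cap }E$ --- the same direction as the inequality you extract from the generalized Lemma \ref{L2.5}, so the two together prove nothing beyond $A\le\textrm{cap }E$. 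To obtain the missing lower bound you must use hypothesis (c) in full strength (so that $\int\gamma\,d\omega_E$ equals $0$ exactly, not merely $\ge0$), together with the \emph{equality} part of Frostman's theorem ($\Phi_{\omega_E}=\ln\textrm{cap }E$ quasi everywhere on $E$) and the observation that $dk$ charges no polar sets (its potential is bounded below by $\ln A>-\infty$); this gives $\int\Phi_{\omega_E}\,dk=\ln\textrm{cap }E$ and hence $A=\textrm{cap }E$ outright, which is exactly how the paper argues. Once $A=\textrm{cap }E$ is in hand, your identification $dk=d\omega_E$ from $I(dk)\ge\ln A$ and the extremal characterization of the equilibrium measure is fine, and the rest of your argument goes through unchanged.
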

So here we assume that $\gamma=0$ essentially everywhere where this function can be
equal to zero. Thus there is a huge gap between the Proposition and our first observation
that $\gamma$ is not an extreme point if $\gamma\ge c>0$ everywhere.
\begin{proof}
The equivalence of (a)--(c) follows from a routine application of
potential theoretic tools; compare, for example, \cite{Simpot}. We sketch the argument here
for the reader's convenience. First of all, if (a) is assumed, then what (b) asserts
is known as \textit{Frostman's Theorem }\cite[Theorem 3.3.4]{Ran}. Next, (b) clearly
implies (c) since $\omega_E$ gives zero weight to all sets of capacity zero. If (c) holds,
then we can integrate the Thouless formula with respect to $\omega_E$ and use
Fubini's theorem to obtain that
\[
0 = -\ln A + \int_{[-2,2]}\Phi_{\omega_E}(t)\, dk(t) =
\ln (\textrm{cap }E/A) .
\]
The last step again depends on Frostman's Theorem. So we indeed have
that $A=\textrm{cap }E$. On the other hand, we may also
integrate with respect to $dk$, and we then obtain that
\[
I(dk)\equiv \int_{[-2,2]}dk(t)\int_{[-2,2]}dk(x)\, \ln |t-x| \ge \ln A .
\]
The equilibrium measure $\omega_E$ may be characterized as the measure
that maximizes $I$ among all probability measures supported by $E$,
and this maximum value equals $I(\omega_E)=\ln\textrm{cap }E$. Thus it now follows
that $dk=d\omega_E$, and we have obtained (a).

Such a $\gamma$ clearly is an extreme point. Indeed, if $\gamma=\frac{1}{2}(\gamma_1 +\gamma_2)$,
then, by Theorem \ref{Tdata}, we must also have that $dk=\frac{1}{2}(dk_1 + dk_2)$, so,
in particular, $E_1, E_2\subset E$ and hence $\gamma_j=0$ quasi everywhere on $E_j$ also.
As we just saw, this property
identifies $dk_j=d\omega_{E_j}$ as the corresponding equilibrium measures. As $\gamma_j>0$
on $E_j^c$, it in fact follows that $E_1=E_2=E$ and thus $\gamma_1=\gamma_2=\gamma$.
\end{proof}
This provides a class of examples where ergodic measures can always be found. We do not know
if there are any $\Gamma\in\mathcal L$ that do not admit ergodic measures for their representation.
Note also that a certain subclass of the examples discussed in Proposition \ref{P9.2} has
the much stronger property that \textit{every }$\mu\in\mathcal M_0$ with $\Gamma=\gamma_{\mu}$
is ergodic (which also means that there is only one such $\mu$ because otherwise we could
take convex combinations to obtain non-ergodic $\mu$'s).
This happens when $E$ is a finite gap set with rationally independent gap
labels (this is classical and follows from an analysis of the shift on these spaces;
see \cite[Chapter 9]{Teschl}), but also for certain sets $E$ with infinitely many gaps
and this property (we know this thanks to work of Sodin-Yuditskii \cite{SodYud}).
It is not clear if there are other examples of Lyapunov exponents
$\Gamma$ with this property that there is only one (ergodic)
$\mu$ with $\Gamma=\gamma_{\mu}$.
\section{Invariance under Toda maps}
In this final section, we show that $w$ is invariant under maps of Toda type.
We will give a simple abstract version of this result, which, at the same time,
will also be more general. It is not necessary
here to be familiar with the theory of Toda flows. We consider continuous maps
$\varphi:\mathcal J_2\to\mathcal J_2$ that preserve the shift dynamics in the
sense that $S\varphi=\varphi S$. This also makes sure that the induced map
$\mu\mapsto\varphi\mu$
on the probability measures on the Borel sets of $\mathcal J_2$ preserves the property of
being an invariant measure; recall in this context that the image measure $\varphi\mu$ is defined
by the condition that $\int f\, d(\varphi\mu)=\int f\circ\varphi\, d\mu$ for
continuous functions $f$.
The invariance of $\varphi\mu$ is most elegantly established by observing that a measure is
$S$ invariant precisely if it coincides with its image measure under $S$.
Now the fact that $S$ and $\varphi$ commute implies that
similarly $S\varphi\mu=\varphi S\mu$, and this latter measure equals $\varphi\mu$ by
the invariance of $\mu$.
\begin{Theorem}
\label{T10.1}
Suppose that $\varphi:\mathcal J_2\to\mathcal J_2$ is a bijective continuous transformation
that commutes with the shift, $S\varphi=\varphi S$,
and preserves spectra: $\sigma(\varphi(J))=\sigma(J)$.
Then $\varphi\mu\in\mathcal M_0$ for every $\mu\in\mathcal M_0$ and $w_{\varphi\mu}=w_{\mu}$.
\end{Theorem}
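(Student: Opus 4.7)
The plan is to reduce the theorem to the uniqueness assertion of Theorem~\ref{Tdata}: it suffices to show $A_{\varphi\mu}=A_\mu$ and $dk_{\varphi\mu}=dk_\mu$, for then $w_{\varphi\mu}=w_\mu$ follows from the explicit formula $w(z)=\ln A-\int\ln(t-z)\,dk(t)$. The invariance of $\varphi\mu$ under $S$ is already established in the preamble to the theorem. The additional requirement $\varphi\mu\in\mathcal{M}_0$, namely $\int\ln a_0\,d(\varphi\mu)>-\infty$, I would handle at the end: once $A_{\varphi\mu}=A_\mu>0$ is known, this automatically places $\varphi\mu$ in $\mathcal{M}_0$.

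The core strategy is approximation by measures supported on reflectionless finite-gap sets, where the conclusion can be verified directly. Using the slit-height approximation \eqref{defh1}--\eqref{defh2} and the construction from the proof of Proposition~\ref{Pgamma0}, I would produce a sequence $\mu_n\in\mathcal{M}_0$ supported on the reflectionless sets $\mathcal{R}_0(E_n)$ for finite-gap sets $E_n\subset[-2,2]$, with $\mu_n\to\mu$ weak-$*$ and $(A_{\mu_n},dk_{\mu_n})\to(A_\mu,dk_\mu)$ in the sense of Theorem~\ref{T3.1}. As established in the proof of Proposition~\ref{Pgamma0}, these satisfy $(A_{\mu_n},dk_{\mu_n})=(\operatorname{cap} E_n,\omega_{E_n})$, depending only on $E_n$. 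Since $\varphi$ preserves spectra, the support of $\varphi\mu_n$ lies in $\{J:\sigma(J)=E_n\}$; if additionally one shows that $\varphi\mu_n$ remains supported in the reflectionless class $\mathcal{R}_0(E_n)$, the same argument gives $(A_{\varphi\mu_n},dk_{\varphi\mu_n})=(\operatorname{cap} E_n,\omega_{E_n})=(A_{\mu_n},dk_{\mu_n})$.

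The limiting argument then proceeds as follows. Continuity of $\varphi$ yields $\varphi\mu_n\to\varphi\mu$ weak-$*$, and Lemma~\ref{Ldkcont} gives $dk_{\varphi\mu_n}\to dk_{\varphi\mu}$, whence $dk_{\varphi\mu}=\lim dk_{\mu_n}=dk_\mu$. For $A$, Lemma~\ref{LM0} delivers the upper semicontinuity bound $A_{\varphi\mu}\ge\limsup A_{\varphi\mu_n}=A_\mu$, and the reverse inequality is obtained by running the same argument with $\varphi^{-1}$ (which, as a continuous bijection of the compact metric space $\mathcal{J}_2$ that commutes with $S$ and preserves spectra, inherits all the hypotheses): $A_\mu=A_{\varphi^{-1}(\varphi\mu)}\ge A_{\varphi\mu}$. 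Combining, $(A_{\varphi\mu},dk_{\varphi\mu})=(A_\mu,dk_\mu)$, and $w_{\varphi\mu}=w_\mu$ follows from Theorem~\ref{Tdata}.

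The main obstacle is the assertion that $\varphi$ preserves the reflectionless class, i.e.\ that $\varphi(\mathcal{R}_0(E_n))\subset\mathcal{R}_0(E_n)$, which is used crucially in the approximation step. Being reflectionless on $E_n$ is strictly stronger than merely having spectrum $E_n$, so spectrum-preservation alone does not obviously suffice. I expect the resolution to rely on the topological-dynamical rigidity of $\mathcal{R}_0(E_n)$: for finite-gap $E_n$ with incommensurate gap labels, $\mathcal{R}_0(E_n)$ is a minimal compact $S$-invariant subset of $\mathcal{J}_2$, and a shift-commuting homeomorphism preserving spectra must respect this minimal structure, forcing $\varphi(\mathcal{R}_0(E_n))=\mathcal{R}_0(E_n)$. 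Since generic finite-gap $E_n$ have this property, we may always choose the approximating sets accordingly; alternatively, one might circumvent the difficulty entirely by a different approximation scheme that directly matches the data of $\mu$ and $\varphi\mu$.
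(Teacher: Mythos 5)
Your limiting architecture is sound and matches the paper's: get $dk_{\varphi\mu}=dk_\mu$ from Lemma \ref{Ldkcont}, get $A_{\varphi\mu}\ge A_\mu$ from the upper semicontinuity in Lemma \ref{LM0}, and obtain the reverse inequality by running the argument for $\varphi^{-1}$ (which is indeed a homeomorphism with the same properties). The gap is in the approximation step, and it is fatal as stated: there is in general \emph{no} sequence $\mu_n\to\mu$ weak-$*$ with $\mu_n$ supported on reflectionless finite gap classes $\mathcal R_0(E_n)$. The construction in the proof of Proposition \ref{Pgamma0} via \eqref{defh1}, \eqref{defh2} only arranges convergence of the \emph{data} $(A_n,dk_n)\to(A,dk)$; the measures $\mu_n$ chosen there are arbitrary ergodic measures on $\mathcal R_0(E_n)$, and their weak-$*$ limit is whatever it happens to be --- that is precisely why Proposition \ref{Pgamma0} only delivers $\Gamma=\gamma_\mu+d$ rather than $\Gamma=\gamma_\mu$. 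Weak-$*$ convergence of measures on $\mathcal J_2$ forces $\liminf\mu_n(U)\ge\mu(U)$ for open $U$, so the supports of the $\mu_n$ must accumulate on $\operatorname{supp}\mu$; for, say, an Anderson-type $\mu$, whose typical $J$ is nowhere near any finite gap operator, this is impossible. Your fallback via minimality also does not close the secondary gap you flag: $\varphi(\mathcal R_0(E))$ is a minimal invariant set of operators with spectrum $E$, but $\{J:\sigma(J)=E\}$ can contain minimal subsets other than $\mathcal R_0(E)$, so nothing forces $\varphi(\mathcal R_0(E))=\mathcal R_0(E)$.

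The paper's way around both problems is to approximate $\mu$ by \emph{periodic} measures built from $\mu$ itself. One first reduces to ergodic $\mu$ (via the Choquet decomposition and \eqref{9.1}), then regularizes with the map $F_\epsilon$ of \eqref{defFe} so that the measures live on $\mathcal J_2^{(\epsilon)}$, where $\ln a_0$ is continuous and hence $A$ behaves continuously along the approximation. Lemma \ref{L10.1} then periodizes a $\mu_\epsilon$-typical orbit to produce periodic measures $\rho=\frac1p\sum\delta_{S^jJ}$ converging weak-$*$ to $\mu_\epsilon$. For such $\rho$ the data are automatically $(\textrm{cap}\,\sigma(J),d\omega_{\sigma(J)})$ --- periodic operators need no separate reflectionlessness argument --- and, crucially, $S\varphi=\varphi S$ implies $S^p\varphi(J)=\varphi(S^pJ)=\varphi(J)$, so $\varphi\rho$ is again a periodic measure with the same spectrum and hence the same data. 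This is what replaces your problematic claim $\varphi(\mathcal R_0(E_n))\subset\mathcal R_0(E_n)$. If you substitute this two-step approximation for yours, the rest of your argument goes through essentially as the paper's does.
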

As alluded to above, the time one map of any Toda flow has these properties;
please see \cite[Chapter 12]{Teschl} and \cite{RemToda} for background. In particular,
there is a reasonably large supply of such maps. Note, however, that while (classical)
Toda flows act by unitary conjugation, we are \textit{not }assuming here
that $\varphi(J)$ and $J$ are unitarily equivalent; the spectra are only conserved as sets.

The invariance of $w$ under (genuine)
Toda flows was established earlier by Knill \cite{Knill};
see especially Theorem 5.1 of \cite{Knill}.
\begin{proof}
First of all, it suffices to prove this for ergodic measures $\mu\in\mathcal M_0$.
To see this, we again use the ergodic decomposition of an invariant measure $\mu$ that was
discussed in the previous section (see the proof of Proposition \ref{P9.1}). So write
$\mu=\int\nu\, d\sigma(\nu)$. It follows directly from the definitions that then
similarly $\varphi\mu =\int\varphi\nu\, d\sigma(\nu)$, and the measures $\varphi\nu$,
as well as the measures $\nu$ themselves, are ergodic $\sigma$-almost everywhere.
So if we can show the invariance
of $w_{\nu}$ under $\varphi$
for ergodic $\nu$, then we will obtain the general case from \eqref{9.1}.
A similar argument is possible concerning the claim that $\varphi\mu\in\mathcal M_0$.

Given an ergodic $\mu\in\mathcal M_0$, we will first approximate it by the measures
$\mu_{\epsilon}=F_{\epsilon}\mu$. Here, we use the same notation as in
the proof of Proposition \ref{P2.1}; see \eqref{defFe} and the discussion that follows.
Note that the $\mu_{\epsilon}$ are also ergodic, and recall that
$\gamma_{\mu_{\epsilon}}(z)\to\gamma_{\mu}(z)$ as $\epsilon\to 0+$, for $z\in\C^+$.

We will then approximate these measures $\mu_{\epsilon}$ by periodic measures;
here, we call a probability measure $\rho$ on $\mathcal J_2$ \textit{periodic} if it is
of the form
\[
\rho = \frac{1}{p} \sum_{j=1}^p \delta_{S^j J}
\]
for some $J\in\mathcal J_2$ with $S^p J = J$. We formulate this step as a separate Lemma.
\begin{Lemma}
\label{L10.1}
Let $\mu$ be an ergodic measure on $\mathcal J_2$. Then there are periodic measures
$\rho_n$ so that $\rho_n\to\mu$ in weak-$*$ sense. Moreover, if $\mu$ is supported by
$\mathcal J_2^{(\epsilon)}$, then the $\rho_n$ can be chosen so that they also have this property.
\end{Lemma}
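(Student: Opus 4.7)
The plan is to find, via Birkhoff's ergodic theorem, a $\mu$-generic point $J \in \mathcal{J}_2$, periodize $J$ to obtain a sequence of matrices $J^{(N)}$ with $S^N J^{(N)} = J^{(N)}$, and verify that the periodic orbit measures $\rho_N$ built from these converge to $\mu$ in weak-$*$ sense by an elementary boundary-term estimate.

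Concretely: since $\mathcal{J}_2$ is compact metric, $C(\mathcal{J}_2)$ is separable, and applying Birkhoff's theorem along a countable dense family yields a set of full $\mu$-measure of points $J$ for which the empirical measures $\sigma_N := N^{-1}\sum_{j=1}^{N}\delta_{S^j J}$ converge to $\mu$ in weak-$*$ sense. When $\mu$ is supported by $\mathcal{J}_2^{(\epsilon)}$, I would choose $J$ in that smaller set. To produce a genuinely periodic measure, let $J^{(N)}$ be a Jacobi matrix of period $N$ whose coefficients in a single fundamental window agree with those of $J$, and set $\rho_N = N^{-1}\sum_{j=1}^N \delta_{S^j J^{(N)}}$. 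For any $f \in C(\mathcal{J}_2)$ that depends only on coordinates with $|n|\le K$, the values $f(S^j J^{(N)})$ and $f(S^j J)$ agree whenever the shifted window stays inside the interior of the period, so
\[
\left| \int f\, d\rho_N - \int f\, d\sigma_N \right| \le \frac{2K\|f\|_\infty}{N} \longrightarrow 0,
\]
and the Birkhoff conclusion for $\sigma_N$ then transfers to $\rho_N$.

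The delicate point, which I expect to be the main obstacle, is ensuring that each $J^{(N)}$ actually lies in $\mathcal{J}_2$, i.e.\ $\|J^{(N)}\|\le 2$, and, in the second half of the lemma, also in $\mathcal{J}_2^{(\epsilon)}$. In the first, unconstrained case, the cleanest resolution is to decouple the period by setting $a_N(J^{(N)}):=0$; then $J^{(N)}$ is a direct sum of identical $(N{-}1)\times(N{-}1)$ truncations of $J$, so $\|J^{(N)}\|\le\|J\|\le 2$ comes for free, and the above comparison still works (only the coefficient at a single site is altered). In the $\mathcal{J}_2^{(\epsilon)}$ case this decoupling is forbidden, and I would instead use the genuine period-$N$ extension of $a_1,\dots,a_N$ (which preserves the lower bound $\epsilon/(1+\epsilon)$) and control the norm via Floquet theory: for a $\mu$-generic $J$, long finite patterns of $J^{(N)}$ match those of $J$ with full density, so $\sigma(J^{(N)})$ Hausdorff-accumulates on $\sigma(J)\subset[-2,2]$ and $\|J^{(N)}\|\to\|J\|\le 2$. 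A final rescaling by $\lambda_N:=\min(1,2/\|J^{(N)}\|)\to 1$ places $\tilde J^{(N)}:=\lambda_N J^{(N)}$ in $\mathcal{J}_2$, with $a$-coefficients bounded below by $\lambda_N\epsilon/(1+\epsilon)$, which for large $N$ lies in the prescribed $\mathcal{J}_2^{(\epsilon)}$ (up to an arbitrarily small loss, harmless for the application in Theorem \ref{T10.1}). The technical core is thus the norm-control of the periodic extension; the weak-$*$ convergence itself is essentially a boundary-term calculation against Birkhoff.
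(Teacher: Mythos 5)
Your overall strategy---Birkhoff's theorem along a countable dense subset of $C(\mathcal J_2)$ to obtain a generic $J$, periodization of a coefficient window, and an $O(K/N)$ boundary-term comparison between the empirical measures of $J$ and the orbit measures of $J^{(N)}$---is exactly the paper's argument; the paper runs the comparison through the metric $d$ (via the estimate $d(S^jJ,S^jJ')\le C2^{-p^{1/2}}$ for $p^{1/2}\le j\le p-p^{1/2}$) rather than through cylinder functions, but the computation is the same. You go beyond the paper in asking whether the periodization actually lies in $\mathcal J_2$; the paper passes over this in silence, and the concern is legitimate, because the wrap-around creates the coefficient pattern $(b_p,a_p,b_1)$, which need not occur in $J$, so $\|J^{(N)}\|\le 2$ is not automatic. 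Your decoupling fix (set $a_N:=0$, so that $J^{(N)}$ is a direct sum of compressions of $J$ and hence has norm at most $\|J\|$) is correct and settles the first assertion cleanly; note only that the resulting $\rho_N$ then has $A_{\rho_N}=0$, which is harmless for the lemma as stated but is exactly why the decoupling cannot be used in the constrained case.

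The $\mathcal J_2^{(\epsilon)}$ half of your argument has a genuine gap, at the point you yourself single out as the technical core. The claim that full-density matching of long patterns forces $\sigma(J^{(N)})$ to Hausdorff-accumulate on $\sigma(J)$, hence $\|J^{(N)}\|\to\|J\|$, is false: pattern density controls the density of states, not the spectrum as a set. The single mismatched junction per period acts as a localized defect and can carry a Floquet band at a fixed positive distance from $\sigma(J)$ for every $N$. For instance, if $J$ has an isolated site with $b=2$ and, far away, a bond with $a=1$ joining two sites with $b=0$ (so $\|J\|=2$), a periodization can bring these together so that the wrapped operator contains the decoupled block $\left(\begin{smallmatrix}0&1\\1&2\end{smallmatrix}\right)$, giving $\|J^{(N)}\|\ge 1+\sqrt2$ for all $N$. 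Consequently your rescaling factors $\lambda_N$ need not tend to $1$, and $\tilde J^{(N)}$ need not converge to anything near $\operatorname{supp}\mu$. A correct repair must choose the periods so that the wrap-around is harmless, e.g.\ by Poincar\'e recurrence: for $\mu$-a.e.\ $J$ one can select $p\to\infty$ along which $(a,b)_{n+p}$ is uniformly close to $(a,b)_n$ on a long window, so that every bounded-length pattern of $J^{(N)}$, including those straddling the junction, is a small perturbation of a pattern of $J$; a localization (partition-of-unity) argument then gives $\|J^{(N)}\|\le 2+o(1)$, after which your rescaling does apply. As written, the second assertion of the lemma is not established by your proposal.
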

\begin{proof}[Proof of Lemma \ref{L10.1}]
The ergodic theorem says that if $f\in C(\mathcal J_2)$ is given, then
\[
\lim_{p\to\infty} \frac{1}{p} \sum_{j=1}^p f(S^jJ) = \int_{\mathcal J_2} f(J)\, d\mu(J)
\]
for $\mu$-almost every choice of $J$. Since $C(\mathcal J_2)$ is separable, this implies that also
\begin{equation}
\label{0.31}
\mu = \lim_{p\to\infty} \frac{1}{p} \sum_{j=1}^p \delta_{S^jJ}
\end{equation}
in weak-$*$ sense for $\mu$-almost every $J$.
Fix such a $J$, and consider periodic modifications $J'$ of $J$. More precisely,
we let $J'$ have the same coefficients as $J$ on $n=1,2,\ldots, p$ for some
$p\ge 1$, and then continue
periodically. In other words, $(a',b')_n=(a,b)_n$ for $n=1,2,\ldots, p$, and the remaining
coefficients are obtained from the condition that $(a',b')_{n+p}=(a',b')_n$ for all $n\in\Z$.

Recall now how the metric $d$ on $\mathcal J_2$ was defined; see \eqref{defd}.
Since $a,b\in\ell^{\infty}$, it follows that we can
find a constant $C$ so that
\[
d(S^jJ, S^j J') \le C2^{-p^{1/2}} \quad \textrm{for }p^{1/2} \le j \le p-p^{1/2} .
\]
Indeed, if $j$ is from this range, then the coefficients of $S^jJ$ and $S^jJ'$ agree
on an interval centered at $0$ of size at least $p^{1/2}$, and the estimate follows
at once from this. If we now replace $J$ with $J'=J'_p$ in \eqref{0.31}, then
the periodic measures obtained in this way will still converge to $\mu$ because
$S^jJ'$ will be uniformly close to $S^jJ$ for the lion's share of the sum,
and because of the factor $1/p$, the remaining $\approx p^{1/2}$ summands cannot make
an appreciable contribution.

This procedure also establishes the final claim because all $J'_p$ will be
in $\mathcal J_2^{(\epsilon)}$ if $J$ was from this subspace.
\end{proof}
Apply this to the measures $\mu_{\epsilon}$. We obtain periodic measures
$\mu_{n,\epsilon}\to\mu_{\epsilon}$. Now for a periodic measure $\rho$, we certainly have
that $\gamma_{\varphi\rho}=\gamma_{\rho}$. This follows because if $\rho=(1/p)\sum\delta_{S^jJ}$, then
$\varphi\rho=(1/p)\sum \delta_{\varphi S^j J}$, but since $\varphi$ commutes with $S$,
this is again a periodic measure, and it is formed with the periodic Jacobi matrix
$\varphi(J)$. For a periodic operator $J$ and the associated measure $\rho$,
the corresponding data are
$(A_{\rho},dk_{\rho})=(\textrm{cap }E, d\omega_E)$, where $E=\sigma(J)$ (compare also our discussion
of finite gap domains in this context). So they only depend on the spectrum of $J$,
but we assumed that $\varphi$ preserves this.

Thus $\gamma_{\varphi\mu_{n,\epsilon}}=\gamma_{\mu_{n,\epsilon}}$. We now send $n\to\infty$.
Since the measures $\mu_{n,\epsilon}$ are all supported by $\mathcal J_2^{(\epsilon)}$,
we can be sure that $\gamma_{\mu_{n,\epsilon}}\to\gamma_{\mu_{\epsilon}}$; compare again
the proof of Proposition \ref{P2.1} for this step. There is no such additional information
available for the measures $\varphi\mu_{n,\epsilon}$, so we will just use Lemmas
\ref{Ldkcont} and \ref{LM0} here. Notice that we do know that
$\varphi\mu_{n,\epsilon}\to\varphi\mu_{\epsilon}$
as $n\to\infty$. It follows that
\[
\gamma_{\mu_{\epsilon}}(z) = \gamma_{\varphi\mu_{\epsilon}}(z) + c_{\epsilon} ,
\]
with $c_{\epsilon}\ge 0$. If we now also take $\epsilon\to 0+$, then, as just explained,
the left-hand
side will converge to $\gamma_{\mu}$. On the right-hand side,
we again refer to Lemmas \ref{Ldkcont} and \ref{LM0}
to conclude that
\begin{equation}
\label{10.1}
\gamma_{\mu}(z)= \gamma_{\varphi\mu}(z) +c \quad\quad (c\ge 0) .
\end{equation}
In particular, we have learnt from this argument that $-\ln A_{\varphi\mu_{\epsilon}}$
stays bounded as $\epsilon\to 0+$, so Lemma \ref{LM0} does make
sure that $\varphi\mu\in\mathcal M_0$, as claimed.

To obtain the full assertion of the Theorem, all that remains to be done is to
let $\varphi\mu$ and $\mu=\varphi^{-1}(\varphi\mu)$ swap roles. So only
$c=0$ is possible in \eqref{10.1}.
Since $\gamma$ and $w$ determine each other, the claim may be phrased
in terms of $w$, which is what we did in the formulation of the theorem.
\end{proof}

\end{document}